\newtheorem{thm}{Theorem}[section]
\newtheorem{prop}[thm]{Proposition}
\newtheorem{cor}[thm]{Corollary}
\newtheorem{lem}[thm]{Lemma}
\newtheorem*{resnn}{Main Result}
\begin{document}

\title{Regularized Pairings of Meromorphic Modular Forms and Theta Lifts}

\author{Shaul Zemel\thanks{This work was carried out while I was working at the Technische Universit\"{a}t Darmstadt in Germany, and was partially supported by DFG grant BR-2163/4-1.}}

\maketitle

\section*{Introduction}

Given a positive discriminant $\delta$, we define $\mathcal{Q}_{\delta}$ to be
the set of integral binary quadratic forms of discriminant $\delta$. Given an
integer $k$, the paper \cite{[Za]} introduced the function
\[f_{k,\delta}(z)=\sum_{Q\in\mathcal{Q}_{\delta}}\frac{1}{Q(z,1)^{k}},\] and
proves it to be a cusp form of weight $2k$. Consequently, the paper \cite{[KZ]}
shows that this modular form is the image, under the Shimura lift, of the
$\delta$th Poincar\'{e} series of weight $k+\frac{1}{2}$ for $\Gamma_{0}(4)$
(this is equivalent to the assertion, appearing in that reference, that
$f_{k,\delta}(z)$ is the $\delta$th ``Fourier coefficient'' in the expansion of
the holomorphic kernel for the Shimura--Shintani lift with respect to the
``weight $k+\frac{1}{2}$ variable'' $\tau$). This determines their pairing with
any cusp form of weight $2k$ via the Petersson inner product.

On the other side, \cite{[BK]} considers similar functions arising from
quadratic forms with \emph{negative} discriminant $D$. These are meromorphic
modular forms, again of weight $2k$, which decrease like cusp forms towards
infinity. They also define a regularized Petersson inner product for
meromorphic modular forms, and evaluate the pairing of these functions
$f_{k,D}$ with other meromorphic modular forms of weight $2k$. Note that in both
references only modular forms with respect to $SL_{2}(\mathbb{Z})$ (or
congruence subgroups of low level) are considered.

\smallskip

The purpose of this paper is twofold. First, we show that the meromorphic
modular forms arising from negative discriminants are also lifts of certain
Poincar\'{e} series. Indeed, given a positive integer $m$, \cite{[Ze3]} combines
a theta lift (which is essentially a generalized Shimura lift) with weight
raising operators on both sides to obtain a lift from weakly holomorphic
modular forms (or harmonic weak Maa\ss\ forms) of weight $\frac{1}{2}-m$ to
meromorphic modular forms of weight $2k=2m+2$. We then prove our
\begin{resnn}
Given $r<0$ and $\beta \in L^{*}/L$, let
$F_{m,r,\beta}^{L}\big(\tau,\frac{3}{4}+\frac{m}{2}\big)$ be the harmonic weak
Maa\ss\ form of weight $\frac{1}{2}-m$ and representation $\rho_{L}$ having
principal part $q^{r}\big(e_{\beta}+(-1)^{m}e_{-\beta}\big)$. Applying the lift from \cite{[Ze3]} to $F_{m,r,\beta}^{L}$ produces roughly the modular form
$f_{m+1,D}$.
\end{resnn}
Here $L$ is a specific lattice that is related to integral binary quadratic
forms. Now, by changing the lattice $L$, we can generalize the definition of
$f_{m+1,D}$ to modular forms with respect to various other Fuchsian groups, and
show that they have similar properties. Examples of such groups, which are of arithmetic interest, arise, e.g., from embeddings of indefinite rational quaternion algebras into $M_{2}(\mathbb{R})$ (see Section 1 of \cite{[Ze3]} for more on these groups).

The second goal of this paper is to use this presentation as the
$\delta_{2m}$-image of a theta lift in order to simplify the evaluation of the
pairing appearing in \cite{[BK]}. Moreover, this method immediately generalizes
the assertions from \cite{[BK]} to meromorphic modular forms with respect to
more general Fuchsian groups. In fact, as the theta lift from \cite{[Ze3]}
admits generalizations to modular forms on higher-dimensional Shimura varieties,
this opens a way to investigate whether appropriate meromorphic Hilbert or
Siegel modular forms have similar properties. However, most parts of this paper
restrict themselves to the 1-dimensional case.

\smallskip

The paper is divided into 11 sections. Section \ref{PoinSer} presents the
Poincar\'{e} series of \cite{[Bru]}, with some of their useful properties.
Section \ref{ThetaLift} introduces the theta lifts of \cite{[B]}, \cite{[Bru]},
and \cite{[Ze3]}. Section \ref{UnfPoin} evaluates the theta lift of
Poincar\'{e} series explicitly (in any dimension), while Section \ref{Dim1}
gives the details of the special case of dimension 1. Section \ref{Expatw}
presents the natural coordinate for expanding modular forms around points in
the upper half-plane, while Section \ref{RegPair} uses this coordinate to give
the details of the regularized pairing of \cite{[BK]}. Section \ref{UnfPhi} then
writes the pairing with (our equivalent of) $f_{m+1,D}$ in a form which is
convenient for its evaluation. The additional formulae required in the case
where $\Gamma$ has cusps are given in Section \ref{Cuspz} and evaluated in
Section \ref{Cuspeval}. Section \ref{PolEval} then produces the final expression
for the pairing. Finally, Section \ref{QuadForms} links our functions, in a
special case, to those from \cite{[BK]}, and describes briefly some connections
to other works.

I would like to express my gratitude to K. Bringmann for sharing the details of
\cite{[BK]} with me. I am also thankful to J. Bruinier for his suggestion to
consider the lifts of the Poincar\'{e} series from \cite{[Bru]}, as well as for
many intriguing discussions.

\section{Weight Raising Operators and Poincar\'{e} Series \label{PoinSer}}

In this Section we describe Poincar\'{e} series producing weak Maa\ss\ forms with arbitrary representations of the integral metaplectic group. The description follows Chapter 1 of \cite{[Bru]} rather closely, with the representation being more general. The applications below will, however, use only Weil representations.

Given complex numbers $\nu$ and $\mu$ with $\Re\mu>0$, one defines the
\emph{Whittaker function} $M_{\nu,\mu}$ to be the solution of the Whittaker
differential equation
\[M_{\nu,\mu}''(t)+\bigg(-\frac{1}{4}+\frac{\nu}{t}+\frac{1-4\mu^{2}}{4t^{2}}
\bigg)M_{\nu,\mu}=0\] that satisfies $M_{\nu,\mu}(t) \sim t^{\mu+\frac{1}{2}}$ as $t\to0^{+}$. For $k\in\frac{1}{2}\mathbb{Z}$ and $s\in\mathbb{C}$ with $\Re s>1$ we define, following Section 1.3 of \cite{[Bru]}, the function
\[\mathcal{M}_{k,s}(t)=t^{-\frac{k}{2}}M_{-\frac{k}{2},s-\frac{1}{2}}(t).\] Let $\mathcal{H}$ be the upper half-plane $\{\tau=x+iy\in\mathbb{C}|y>0\}$. For any variable $\xi$ we shorthand $\frac{\partial}{\partial\xi}$ to $\partial_{\xi}$. Hence
\[\partial_{\tau}=\frac{\partial_{x}-i\partial_{y}}{2}\quad\mathrm{and}\quad\partial_{\overline{\tau}}=\frac{\partial_{x}+i\partial_{y}}{2},\] and we define the \emph{weight raising operator}, the \emph{weight lowering operator}, and the \emph{weight $k$ Laplacian} to be \[\delta_{k}=\partial_{\tau}+\frac{k}{2iy},\quad y^{2}\partial_{\overline{\tau}},\quad\mathrm{and}\quad\Delta_{k}=4\delta_{k-2} \cdot y^{2}\partial_{\overline{\tau}}\] respectively. Note that eigenvalues of eigenfunctions are conventionally taken with respect to $-\Delta_{k}$.

We introduce the useful shorthand $\mathbf{e}(z)=e^{2\pi iz}$ for any complex
number $z$. Given $0>r\in\mathbb{Q}$, one proves
\begin{prop}
The function taking $\tau\in\mathcal{H}$ to
$\mathbf{e}(rx)\mathcal{M}_{k,s}(4\pi|r|y)$ is a weight $k$ eigenfunction of
eigenvalue $\frac{k(k-2)}{4}+s(1-s)$. \label{eigen}
\end{prop}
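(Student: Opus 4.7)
The plan is to reduce the eigenvalue equation to the defining ODE of the Whittaker function $M_{-k/2,s-1/2}$. The first step is to rewrite $\Delta_{k}=4y^{2}\delta_{k}\partial_{\overline{\tau}}$ in the real coordinates $x,y$: using $4\partial_{\tau}\partial_{\overline{\tau}}=\partial_{x}^{2}+\partial_{y}^{2}$ and $2\partial_{\overline{\tau}}=\partial_{x}+i\partial_{y}$, the weight correction $k/(2iy)$ inside $\delta_k$ combines with the factor $4y^{2}$ to produce
\[\Delta_{k}=y^{2}(\partial_{x}^{2}+\partial_{y}^{2})-iky\partial_{x}+ky\partial_{y}.\]

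I would then apply this operator to $f(\tau)=\mathbf{e}(rx)\mathcal{M}_{k,s}(4\pi|r|y)$ and substitute $t=4\pi|r|y$. Each $\partial_{x}$ pulls out a factor $2\pi ir$, each $\partial_{y}$ differentiates $\mathcal{M}_{k,s}$ with a chain-rule factor $4\pi|r|$, and all powers of $y$ can be exchanged for powers of $t$ via $y=t/(4\pi|r|)$; the sign hypothesis $r<0$ enters through $r y=-t/(4\pi)$ in the first-order $x$-term, which is what gives the correct sign of the $kt/2$ contribution below. Collecting terms, the result has the purely one-variable form
\[\Delta_{k}f=\mathbf{e}(rx)\Bigl[t^{2}\mathcal{M}_{k,s}''(t)+kt\,\mathcal{M}_{k,s}'(t)-\tfrac{t^{2}}{4}\mathcal{M}_{k,s}(t)-\tfrac{kt}{2}\mathcal{M}_{k,s}(t)\Bigr],\]
so the claim $-\Delta_{k}f=\lambda f$ becomes an ODE for $\mathcal{M}_{k,s}$ with $\lambda=\tfrac{k(k-2)}{4}+s(1-s)$.

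To verify that ODE, I would substitute the definition $\mathcal{M}_{k,s}(t)=t^{-k/2}M_{-k/2,s-1/2}(t)$ and expand. The choice of the shift $t^{-k/2}$ is designed precisely so that after expanding $\mathcal{M}_{k,s}'$ and $\mathcal{M}_{k,s}''$, the $M'$-terms cancel identically against the $k\mathcal{M}_{k,s}'/t$ contribution; this is the cleanest way to handle the weight raising. What remains, after dividing by $t^{-k/2}$, is the equation
\[M''(t)=\Bigl(\tfrac{1}{4}+\tfrac{k}{2t}-\tfrac{1}{t^{2}}\bigl(\lambda-\tfrac{k(k-2)}{4}\bigr)\Bigr)M(t).\]
Comparing with the Whittaker differential equation for $M_{-k/2,s-1/2}$, and using the algebraic identity $1-4(s-\tfrac{1}{2})^{2}=4s(1-s)$ to identify the coefficient of $1/t^{2}$ with $-s(1-s)$, pins down $\lambda$ uniquely as $\tfrac{k(k-2)}{4}+s(1-s)$.

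The computation is entirely mechanical, and I do not expect any conceptual obstacle; the only thing to be careful about is bookkeeping. The weight-$k$ correction $k/(2iy)$ in $\delta_k$, the sign flip hidden in $ry<0$, and the half-integer shift $s-\tfrac{1}{2}$ in the Whittaker parameter each contribute one of the ingredients $ky\partial_y$, $-kt/2$, and $s(1-s)$; matching them in the right order is the entire content of the proof.
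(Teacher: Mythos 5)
Your computation is correct and complete: rewriting $-\Delta_{k}=-4y^{2}\delta_{k}\partial_{\overline{\tau}}$ in real coordinates, substituting $t=4\pi|r|y$, and reducing to the Whittaker ODE via $\mathcal{M}_{k,s}(t)=t^{-k/2}M_{-k/2,s-1/2}(t)$ is exactly the standard verification, and every sign checks out (including the one contributed by $r<0$ in the first-order $x$-term, and the identity $1-4(s-\tfrac{1}{2})^{2}=4s(1-s)$). The paper gives no proof of this proposition at all --- it is quoted as a known computation following Section 1.3 of Bruinier's book --- so your argument simply supplies the omitted verification, by the only route one would take.
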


The function $\mathcal{M}_{k,s}(4\pi|r|y)$ grows like $(4\pi|r|y)^{s-\frac{k}{2}}$ as $y\to0$. The other eigenfunction having the same eigenvalue as in Proposition \ref{eigen}, which is based on the Whittaker $W$-function, grows like $y^{1-s-\frac{k}{2}}$, i.e., faster, since we assume $\Re s>1$. Therefore an eigenfunction of eigenvalue $\frac{k^{2}-2k}{4}+s(1-s)$ growing as $o\big(y^{1-s-\frac{k}{2}}\big)$ as $y\to0$ is a multiple of the function from Proposition \ref{eigen}. Now, the commutation relation between the weight changing operators and the corresponding Laplacians show that given a weight $k$ eigenfunction $F$ on $\mathcal{H}$ of eigenvalue $\frac{k(k-2)}{4}+s(1-s)$, the weight $k+2$ function $\delta_{k}F$ has eigenvalue $\frac{(k+2)k}{4}+s(1-s)$. Applying this to the function from Proposition \ref{eigen} and observing the growth of its $\delta_{k}$-images as $y\to0$ establishes
\begin{prop}
Applying $\frac{1}{2\pi i}\delta_{k}$ to
$\tau\mapsto\mathbf{e}(rx)\mathcal{M}_{k,s}(4\pi|r|y)$ yields the function
\[\tau\mapsto-|r|\bigg(s+\frac{k}{2}\bigg)\mathbf{e}(rx)\mathcal{M}_{k+2,s}
(4\pi|r|y).\] \label{deltakPSgen}
\end{prop}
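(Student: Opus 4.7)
My plan is to deduce the proposition by the three-step argument hinted at in the sentence preceding its statement. Set $f(\tau)=\mathbf{e}(rx)\mathcal{M}_{k,s}(4\pi|r|y)$. I will (i) use the commutation between $\delta_k$ and the weight $k$ Laplacian to fix the eigenvalue of $\delta_k f$, (ii) use a growth estimate at $y\to 0$ together with the uniqueness remark preceding Proposition \ref{eigen} to identify $\delta_k f$ as a scalar multiple of $\mathbf{e}(rx)\mathcal{M}_{k+2,s}(4\pi|r|y)$, and (iii) compare leading asymptotics to pin down the scalar.

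For step (i), Proposition \ref{eigen} gives that $f$ is a weight $k$ eigenfunction with eigenvalue $\frac{k(k-2)}{4}+s(1-s)$, so the commutation relation recalled just before the statement shows that $\delta_k f$ is a weight $k+2$ eigenfunction with the eigenvalue $\frac{(k+2)k}{4}+s(1-s)$ that also governs $\mathbf{e}(rx)\mathcal{M}_{k+2,s}(4\pi|r|y)$.

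For step (ii), the small-$t$ asymptotic $M_{-k/2,s-1/2}(t)\sim t^{s}$ yields $\mathcal{M}_{k,s}(t)\sim t^{s-k/2}$, hence $f\sim (4\pi|r|)^{s-k/2}y^{s-k/2}\mathbf{e}(rx)$ as $y\to 0$. Both $\partial_\tau$ (through $\partial_y$) and multiplication by $k/(2iy)$ strip one power of $y$ from this, producing $\delta_k f=O(y^{s-(k+2)/2})$. Since $\Re s>1$, this is $o(y^{1-s-(k+2)/2})$, so the uniqueness statement preceding Proposition \ref{eigen} forces $\delta_k f$ to be a constant multiple of $\mathbf{e}(rx)\mathcal{M}_{k+2,s}(4\pi|r|y)$.

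For step (iii), I read the constant off the coefficient of $y^{s-(k+2)/2}\mathbf{e}(rx)$ in the expansion at $y\to 0$. The $\partial_x$ contribution to $\partial_\tau f$ is of order $y^{s-k/2}$ and is therefore negligible for the leading term; the $\partial_y$ contribution to $\partial_\tau=\tfrac{1}{2}(\partial_x-i\partial_y)$ gives $-\tfrac{i}{2}(s-k/2)(4\pi|r|)^{s-k/2}$, while $(k/2iy)f$ contributes $-\tfrac{ik}{2}(4\pi|r|)^{s-k/2}$; together these sum to $-\tfrac{i}{2}(s+k/2)(4\pi|r|)^{s-k/2}$. Dividing by $2\pi i$ and using $(4\pi|r|)^{s-k/2}=4\pi|r|\cdot(4\pi|r|)^{s-(k+2)/2}$ gives exactly $-|r|(s+k/2)(4\pi|r|)^{s-(k+2)/2}$, matching the claimed value.

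The main care is bookkeeping rather than any genuine difficulty: signs in $\partial_\tau=\tfrac{1}{2}(\partial_x-i\partial_y)$, correctly discarding the subleading $\partial_x$ contribution, and checking that $\Re s>1$ really gives the margin needed for uniqueness. As an alternative, one could avoid the asymptotic comparison entirely by invoking a contiguous relation of the form $(t\tfrac{d}{dt}+\tfrac{k}{2}-\tfrac{t}{2})M_{-k/2,s-1/2}(t)=(s+\tfrac{k}{2})M_{-k/2-1,s-1/2}(t)$ and unfolding the definitions, but the uniqueness route is shorter and is the one gestured at by the author.
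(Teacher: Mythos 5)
Your proposal is correct and follows essentially the same route as the paper, whose justification is precisely the paragraph preceding the statement: the commutation relation fixes the eigenvalue of $\delta_{k}f$, the $o(y^{1-s-(k+2)/2})$ growth as $y\to0$ identifies it (via the Whittaker $M$/$W$ dichotomy) as a scalar multiple of $\mathbf{e}(rx)\mathcal{M}_{k+2,s}(4\pi|r|y)$, and the scalar is read off from the leading asymptotics. Your explicit computation of the constant $-|r|\big(s+\frac{k}{2}\big)$, including the cancellation of the subleading $\partial_{x}$ term and the identity $(4\pi|r|)^{s-k/2}=4\pi|r|\,(4\pi|r|)^{s-(k+2)/2}$, checks out and merely makes explicit what the paper leaves implicit.
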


An element of the double cover $Mp_{2}(\mathbb{Z})$ of $SL_{2}(\mathbb{Z})$ is a pair consisting of a matrix $A=\binom{a\ \ b}{c\ \ d}$ from $SL_{2}(\mathbb{Z})$ and a holomorphic function on $\mathcal{H}$ whose square is the factor of automorphy $j(A,\tau)=c\tau+d$. It is generated by the two elements \[T=\bigg(\binom{1\ \ 1}{0\ \ 1},1\bigg)\quad\mathrm{and}\quad S=\bigg(\binom{0\ \ -1}{1\ \ \ \
0},\sqrt{\tau}\in\mathcal{H}\bigg).\] These elements satisfy the relation
$S^{2}=(ST)^{3}=Z=(-I,i)$, and $Z$ generates the center of $Mp_{2}(\mathbb{Z})$,
which is cyclic of order 4. Let $\rho:Mp_{2}(\mathbb{Z}) \to \mathcal{U}(V)$ be
a finite-dimensional unitary representation of $Mp_{2}(\mathbb{Z})$ factoring
through a finite quotient, let $k\in\frac{1}{2}\mathbb{Z}$ be a weight, and let
$r\in\mathbb{Q}$ be a negative number. Choose an element $\omega \in V$ that is
an eigenvector of both $\rho(Z)$ and $\rho(T)$, with eigenvalues $i^{-2k}$ and
$\mathbf{e}(r)$ respectively. Then any function of the sort
$\tau\mapsto\mathbf{e}(rx)M(y)\omega$ is invariant under the slash operators
\[f[A]_{k,\rho}(\tau)=\rho(A)^{-1}j(A,\tau)^{-k}f(A\tau)\] for $A=T$ and $A=Z$
(using the metaplectic data for half-integral weights). Now, for any $A \in
Mp_{2}(\mathbb{Z})$ and $\tau\in\mathcal{H}$ we have $|\mathbf{e}(r\Re
A\tau)|=1$ and \[\big|\mathcal{M}_{k,s}(4\pi|r|\Im A\tau)j(A,\tau)^{-k}\big|=(4\pi|r|y)^{-\frac{k}{2}}\bigg|M_{-\frac{k}{2},s-\frac{1}{2}}
\bigg(\frac{4\pi|r|y}{|j(A,\tau)|^{2}}\bigg)\bigg|.\] As $\rho$ factors through a finite quotient, Proposition \ref{eigen} and the behavior of $M_{-\frac{k}{2},s-\frac{1}{2}}$ for small positive values of the argument imply, for $\Re s>1$ (as in the remark following Definition 1.8 of \cite{[Bru]}), the following
\begin{prop}
The poincar\'{e} series
\[F_{k,r}^{\rho,\omega}(\tau,s)=\frac{1}{4\Gamma(2s)}\sum_{A\in\langle T \rangle \backslash Mp_{2}(\mathbb{Z})}\Big(\mathrm{e}(rx)\mathcal{M}_{k,s}(4\pi|r|y)\omega\Big)[A]_{k,\rho}(\tau)\] converges locally uniformly on $\mathcal{H}$ to a modular form of weight $k$ and representation $\rho$ with respect to $Mp_{2}(\mathbb{Z})$ that is an eigenfunction with eigenvalue $\frac{k(k-2)}{2}+s(1-s)$. \label{Poincare}
\end{prop}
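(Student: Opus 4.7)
The plan is to establish three things in order: well-definedness of the summand on the quotient $\langle T\rangle\backslash Mp_{2}(\mathbb{Z})$, local uniform convergence for $\Re s>1$, and the automorphic plus eigenfunction properties.

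First I would record that the paragraph preceding the proposition already shows the seed $\tau\mapsto\mathbf{e}(rx)\mathcal{M}_{k,s}(4\pi|r|y)\omega$ is invariant under the slash operators of $T$ and $Z$, since $\omega$ is by assumption an eigenvector of $\rho(T)$ and $\rho(Z)$ with the correct eigenvalues. Because $\langle T,Z\rangle$ is the stabilizer of the cusp $\infty$ in $Mp_{2}(\mathbb{Z})$ and $\langle Z\rangle$ has order $4$, each summand depends only on the coset $\langle T\rangle A$, and the factor $1/4$ in front corrects for the fourfold contribution of the center; the $\Gamma(2s)$ is the standard normalizer arising from the Whittaker $M$-function near $0$.

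The heart of the argument is convergence. From the identity displayed just before the proposition, the norm of each summand equals
\[|\rho(A)^{-1}\omega|\,(4\pi|r|y)^{-k/2}\bigg|M_{-k/2,s-1/2}\bigg(\frac{4\pi|r|y}{|j(A,\tau)|^{2}}\bigg)\bigg|.\]
Since $\rho$ factors through a finite quotient, the factor $|\rho(A)^{-1}\omega|$ is uniformly bounded. For cosets with $|j(A,\tau)|$ large, the argument of the Whittaker function is small, so I would substitute the defining asymptotic $M_{-k/2,s-1/2}(t)\sim t^{s}$ as $t\to 0^{+}$ to bound the summand by a constant multiple of $y^{\Re s-k/2}|j(A,\tau)|^{-2\Re s}$. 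The finitely many cosets where this argument is bounded away from $0$ are controlled directly by the smoothness of $M_{-k/2,s-1/2}$ on compact subsets of $(0,\infty)$. Summation over cosets with the bound $|j(A,\tau)|^{-2\Re s}$ then follows from the classical Eisenstein series convergence argument for $\Re s>1$, also delivering the locally uniform nature of the convergence on $\mathcal{H}$.

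Once convergence is in hand, modularity follows from the usual rearrangement: for any $B\in Mp_{2}(\mathbb{Z})$, the map $A\mapsto AB$ permutes the cosets in $\langle T\rangle\backslash Mp_{2}(\mathbb{Z})$, and the cocycle property of $j$ combined with the compatibility of $\rho$ with the slash action gives $F_{k,r}^{\rho,\omega}(\cdot,s)[B]_{k,\rho}=F_{k,r}^{\rho,\omega}(\cdot,s)$. For the eigenvalue statement, Proposition \ref{eigen} says the seed is an eigenfunction of $\Delta_{k}$ of the claimed eigenvalue, and slash operators commute with $\Delta_{k}$, so each summand is such an eigenfunction as well. Applying the same estimates to the (elliptic) differentiated series yields locally uniform convergence there too, justifying term-by-term differentiation and transferring the eigenvalue to the sum. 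The main obstacle is the convergence step: one needs the two asymptotic regimes of $M_{-k/2,s-1/2}$ to fit together uniformly in $A$, and to exploit the finiteness of the image of $\rho$ so that the entire question reduces to the standard $\sum|j(A,\tau)|^{-2\Re s}$ bound.
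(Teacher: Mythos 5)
Your argument is correct and follows essentially the same route as the paper, which likewise deduces convergence from the displayed identity for $\big|\mathcal{M}_{k,s}(4\pi|r|\Im A\tau)j(A,\tau)^{-k}\big|$, the small-argument behavior $M_{-k/2,s-1/2}(t)\sim t^{s}$, the finiteness of the image of $\rho$, and the classical Eisenstein-type bound $\sum_{A}|j(A,\tau)|^{-2\Re s}<\infty$ for $\Re s>1$, with modularity and the eigenfunction property obtained by rearrangement and term-by-term differentiation exactly as you describe. (The one discrepancy is harmless: your appeal to Proposition \ref{eigen} delivers the eigenvalue $\frac{k(k-2)}{4}+s(1-s)$, and the denominator $2$ printed in Proposition \ref{Poincare} appears to be a typographical slip in the paper rather than a gap in your argument.)
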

Here and throughout, $\Gamma(\xi)$ stands for the value at $\xi$ of the
classical gamma function.

The convergence in Proposition \ref{Poincare} and the fact that the parameter $s$ is not changed in Proposition \ref{deltakPSgen} combine to give
\begin{cor}
The equality \[\frac{1}{2\pi
i}\delta_{k}F_{k,r}^{\rho,\omega}(\tau,s)=-|r|\bigg(s+\frac{k}{2}\bigg)F_{k+2,r}
^{\rho,\omega}(\tau,s)\] holds for any $\tau\in\mathcal{H}$ and $s\in\mathbb{C}$
with $\Re s>1$. \label{deltakPoin}
\end{cor}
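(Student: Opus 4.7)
The plan is to prove the identity by applying $\tfrac{1}{2\pi i}\delta_k$ termwise to the defining series of $F_{k,r}^{\rho,\omega}(\tau,s)$ and identifying the result with the defining series of $F_{k+2,r}^{\rho,\omega}(\tau,s)$, with the constant $-|r|(s+k/2)$ pulled out. Two ingredients make this work immediately: Proposition \ref{deltakPSgen}, which computes the action of $\tfrac{1}{2\pi i}\delta_k$ on the seed function $\tau\mapsto\mathbf{e}(rx)\mathcal{M}_{k,s}(4\pi|r|y)$, and the standard intertwining property of the weight raising operator with the slash operators, namely
\[\delta_{k}\bigl(f[A]_{k,\rho}\bigr)=(\delta_{k}f)[A]_{k+2,\rho}\]
for $A\in Mp_{2}(\mathbb{Z})$, which holds because $\delta_k$ is the Maass raising operator characterized by this intertwining. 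Since $\omega$ does not depend on $\tau$, these two facts combine to give that $\tfrac{1}{2\pi i}\delta_k$ applied to the $A$th summand of $F_{k,r}^{\rho,\omega}(\tau,s)$ equals $-|r|(s+k/2)$ times the $A$th summand of $F_{k+2,r}^{\rho,\omega}(\tau,s)$, noting that the parameter $s$ is unchanged.

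The only step that requires care is the justification of termwise differentiation. Proposition \ref{Poincare} gives locally uniform convergence of the Poincaré series on $\mathcal{H}$. The same argument, based on the polynomial growth of $\mathcal{M}_{k,s}$ at $0$ and on $\rho$ factoring through a finite quotient, applies to the series obtained by differentiating each term (either as $\partial_\tau$ or as $\partial_x,\partial_y$), since the differentiated seed function, by Proposition \ref{deltakPSgen}, is of exactly the same shape $\mathbf{e}(rx)\mathcal{M}_{k+2,s}(4\pi|r|y)$ up to a constant. Hence the series of derivatives also converges locally uniformly on $\mathcal{H}$, which legitimates exchanging $\delta_k$ with the summation.

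Putting these pieces together, the computation is
\begin{align*}
\frac{1}{2\pi i}\delta_{k}F_{k,r}^{\rho,\omega}(\tau,s)
&=\frac{1}{4\Gamma(2s)}\sum_{A\in\langle T\rangle\backslash Mp_{2}(\mathbb{Z})}\frac{1}{2\pi i}\delta_{k}\bigl[\mathbf{e}(rx)\mathcal{M}_{k,s}(4\pi|r|y)\omega\bigr][A]_{k,\rho}(\tau)\\
&=-|r|\Bigl(s+\tfrac{k}{2}\Bigr)\frac{1}{4\Gamma(2s)}\sum_{A\in\langle T\rangle\backslash Mp_{2}(\mathbb{Z})}\bigl[\mathbf{e}(rx)\mathcal{M}_{k+2,s}(4\pi|r|y)\omega\bigr][A]_{k+2,\rho}(\tau),
\end{align*}
which is exactly $-|r|(s+k/2)F_{k+2,r}^{\rho,\omega}(\tau,s)$. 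The main (minor) obstacle is the convergence justification for interchanging derivative and sum; aside from that, the corollary is a direct combination of Proposition \ref{deltakPSgen} with the intertwining of $\delta_k$ and the slash operators.
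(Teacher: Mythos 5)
Your proposal is correct and follows essentially the same route as the paper, which likewise derives the corollary by combining Proposition \ref{deltakPSgen} (noting that $s$ is unchanged) with the locally uniform convergence from Proposition \ref{Poincare} to justify termwise application of $\delta_{k}$. Your explicit mention of the intertwining of $\delta_{k}$ with the slash operators and the convergence of the differentiated series only spells out what the paper leaves implicit.
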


Note that if $k$ is negative then the value $s=1-\frac{k}{2}>1$ yields the
eigenvalue 0 in Proposition \ref{Poincare}. The resulting modular form is thus a harmonic weak Maa\ss\ form. For these forms we have a richer structure, which is investigated in detail in Section 3 of \cite{[BF]}. Moreover, Proposition 1.10 of \cite{[Bru]} shows that $F_{k,r}^{\rho,\omega}\big(\tau,1-\frac{k}{2}\big)=q^{r}\omega+O(1)$ as $y\to\infty$ (both references consider only Weil representations, but the results extend immediately to our more general representations of $Mp_{2}(\mathbb{Z})$), so that in particular the image of $F_{k,r}^{\rho,\omega}\big(\tau,1-\frac{k}{2}\big)$ under the operator
$\xi_{k}=y^{k}\overline{\partial_{\overline{\tau}}}$ of \cite{[BF]} lies in the
space of cusp forms of weight $2-k$ and representation $\overline{\rho}$. Here and throughout we use the classical notation $q=\mathbf{e}(\tau)$. Now, any  principal part of a harmonic weak Maa\ss\ form with cuspidal $\xi_{k}$-image is a finite sum of such principal parts $q^{r}\omega$ with $r<0$ and $\omega \in V$ such that $\rho(T)\omega=\mathbf{e}(r)\omega$ and $\rho(Z)\omega=i^{-2k}\omega$, and for a representation factoring through a finite quotient a harmonic weak Maa\ss\ form of negative weight is determined by its principal part (this is not necessarily true for representations not factoring through a finite quotient---see, e.g., \cite{[Ze2]}). This proves, as in Proposition 1.12 of \cite{[Bru]}, the following
\begin{prop}
The space of harmonic weak Maa\ss\ forms of weight $k$ and representation $\rho$ with cuspidal $\xi_{k}$-images is spanned, for any $k<0$ and a representation $\rho$ factoring through a finite quotient, by the Poincar\'{e} series from Proposition \ref{Poincare} with $s=1-\frac{k}{2}$.
\label{hwMfPoin}
\end{prop}

We remark that all the statements of this section hold if we replace
$Mp_{2}(\mathbb{Z})$ by any of its subgroups of finite index, a fact which can
be easily seen either by averaging or by using induced representations.

\section{Theta Lifts \label{ThetaLift}}

Let $L$ be an \emph{even lattice} of signature $(b_{+},b_{-})$. This means a free Abelian group of finite rank with a non-degenerate bilinear form $L \times
L\to\mathbb{Z}$, such that $\lambda^{2}=(\lambda,\lambda)$ is even for every $\lambda \in L$, and such that the extension of the bilinear form to the real vector space $L_{\mathbb{R}}=L\otimes\mathbb{R}$ has signature $(b_{+},b_{-})$. The group $L^{*}=Hom(L,\mathbb{Z})$ is embedded into $L_{\mathbb{R}}$, containing $L$ with finite index. The \emph{discriminant group} $D_{L}=L^{*}/L$ carries a $\mathbb{Q}/\mathbb{Z}$-valued quadratic form
$\gamma\mapsto\frac{\gamma^{2}}{2}$. If we assume that $b_{+}=2$ then the
Grassmannian $G(L_{\mathbb{R}})$, which is the set of decompositions of
$L_{\mathbb{R}}$ into the orthogonal direct sum of a positive definite space
$v_{+}$ and a negative definite space $v_{-}$, carries the structure of a
complex manifold. Indeed, fixing an isotropic vector $z \in L_{\mathbb{R}}$
yields the Lorentzian space $K_{\mathbb{R}}=z^{\perp}/\mathbb{R}z$, in which the
choice of $z$ and of a continuous orientation on the positive definite part
$v_{+}$ determines one cone $C$ of positive norm vectors in $K_{\mathbb{R}}$ to
be the \emph{positive cone}. Choosing $\zeta \in L_{\mathbb{R}}$ with
$(z,\zeta)=1$ identifies $K_{\mathbb{R}}$ with the subspace
$\{z,\zeta\}^{\perp}$ of $L_{\mathbb{R}}$, and maps $G(L_{\mathbb{R}})$
homeomorphically onto the tube domain $K_{\mathbb{R}}+iC$. The inverse map takes
$Z \in K_{\mathbb{R}}+iC$ to the element of $G(L_{\mathbb{R}})$ in which $v_{+}$
is spanned by the real and imaginary parts of the norm 0 vector
\[Z_{V,Z}=Z+\zeta-\frac{Z^{2}+\zeta^{2}}{2}z \in L_{\mathbb{C}}.\] For more
details on this construction see Section 13 of \cite{[B]}, Section 3.2 of
\cite{[Bru]}, or Section 2.2 of \cite{[Ze3]}.

The connected component $SO^{+}(L_{\mathbb{R}})$ of $O(L_{\mathbb{R}})$ operates
on $G(L_{\mathbb{R}})$, and therefore also on $K_{\mathbb{R}}+iC$. The action of
an element $M$ of the latter group sends, for any $Z \in K_{\mathbb{R}}+iC$,
the norm 0 vector $Z_{V,Z}$ to some multiple of $Z_{V,MZ}$, and the multiplier
$J(M,Z)$ defines a factor of automorphy for this action. We call a function
$\Phi:K_{\mathbb{R}}+iC\to\mathbb{C}$ an \emph{automorphic form of weight $m$}
with respect to a discrete subgroup $\Gamma$ of $SO^{+}(L_{\mathbb{R}})$ if it
satisfies the usual functional equations \[\Phi(MZ)=J(M,Z)^{m}\Phi(Z)\] for any
$M\in\Gamma$ and $Z \in K_{\mathbb{R}}+iC$. The natural group to take for
$\Gamma$ is the intersection $Aut(L) \cap SO^{+}(L_{\mathbb{R}})$, or the kernel
of the canonical map from the latter group into $Aut(D_{L})$, called the
\emph{discriminant kernel} of $L$.

Given a general even lattice $L$, the group $Mp_{2}(\mathbb{Z})$ admits a (Weil)
representation $\rho_{L}$ on the space $\mathbb{C}[D_{L}]$. Every canonical
basis vector $e_{\gamma}$ with $\gamma \in D_{L}$ is an eigenvector of
$\rho_{L}(T)$ with eigenvalue $\mathbf{e}\big(\frac{\gamma^{2}}{2}\big)$, and
$\rho_{L}(S)$ operates, up to a constant, as the Fourier transform:
\[\rho_{L}(S)e_{\gamma}=\frac{\mathbf{e}\big(\frac{b_{-}-b_{+}}{8}\big)}{\sqrt{
|D_{L}|}}\sum_{\delta \in
D_{L}}\mathbf{e}\big(-(\gamma,\delta)\big)e_{\delta}.\] The action of
$\rho_{L}(Z)$ sends $e_{\gamma}$ to $i^{b_{-}-b_{+}}e_{-\gamma}$. The vectors on which $\rho_{L}(Z)$ operates as $i^{-2k}$ are thus as follows. In case $2k \equiv b_{+}-b_{-}\pmod4$ they are spanned by the combinations $e_{\beta}+e_{-\beta}$ (including just $2e_{\beta}$ in case $2\beta=0$ in $D_{L}$). If $2k-2 \equiv b_{+}-b_{-}\pmod4$ then they are generated by the differences $e_{\beta}-e_{-\beta}$ for $\beta \in D_{L}$ of order not dividing 2, and there are no such vectors if $2k-1 \equiv b_{+}-b_{-}\pmod2$. Note that all these generators are eigenvectors of $T$, allowing us to use any of them in order to define Poincar\'{e} series as in Proposition \ref{Poincare}. In case $b_{+}=2$, $k=1-\frac{b_{-}}{2}+m$ for some $m\in\mathbb{Z}$, and $\omega=e_{\beta}+(-1)^{m}e_{-\beta}$, we denote the function $F_{k,r}^{\rho,\omega}$ by $F_{m,r,\beta}^{L}$. For more on the representation $\rho_{L}$ see \cite{[Ze1]}, as well as the references cited there.

Back in the case $b_{+}=2$, given $\lambda \in L^{*}$ and $Z=X+iY \in
K_{\mathbb{R}}+iC$, we denote $\lambda_{\pm}$ the projection of $\lambda$ onto
the $v_{\pm}$-part according to the element of $G(L_{\mathbb{R}})$ corresponding
to $Z$. Then \cite{[Ze3]} considers, for some $0<m\in\mathbb{N}$, the theta
function \[\Theta_{L,m,m,0}(\tau,Z)=\sum_{\lambda \in
L^{*}}\frac{(\lambda,Z_{V,Z})^{m}}{(Y^{2})^{m}}\mathbf{e}\bigg(\tau\frac{
\lambda_{+}^{2}}{2}+\overline{\tau}\frac{\lambda_{-}^{2}}{2}\bigg)e_{\lambda+L}
.\] Here $\tau\in\mathcal{H}$, $Z \in K_{\mathbb{R}}+iC$, and
$\Theta_{L,m,m,0}(\tau,Z)\in\mathbb{C}[D_{L}]$, where the coefficient of
$e_{\beta}$ for $\beta \in D_{L}$ is
\[\theta_{L+\beta,m,m,0}(\tau,Z)=\sum_{\lambda \in
L+\beta}\frac{(\lambda,Z_{V,Z})^{m}}{(Y^{2})^{m}}\mathbf{e}\bigg(\tau\frac{
\lambda_{+}^{2}}{2}+\overline{\tau}\frac{\lambda_{-}^{2}}{2}\bigg).\] The
properties of this theta function are given in
\begin{thm}
$(i)$ Let $Z$ be a fixed element of $K_{\mathbb{R}}+iC$. The function sending $\tau\in\mathcal{H}$ to $y^{b_{-}/2}\Theta_{L,m,m,0}(\tau,Z)$ is modular of weight $1-\frac{b_{-}}{2}+m$ and representation $\rho_{L}$. $(ii)$ If $\tau$ is fixed then considering the complex conjugate of $\theta_{L+\beta,m,m,0}$, for any $\beta \in D_{L}$, as a function of $Z$, it is an automorphic form of weight $m$ with respect to the discriminant kernel of $L$. \label{theta}
\end{thm}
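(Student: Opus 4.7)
The plan is to recognize $\Theta_{L,m,m,0}$ as a special case of Borcherds's polynomial theta functions with harmonic polynomial coefficients, and then to read off both parts from the general theory together with a short substitution argument for (ii). The key preliminary observation is that since the isotropic vector $Z_{V,Z}$ lies in the complexification $v_{+,\mathbb{C}}$ of the positive definite subspace, the pairing $(\lambda,Z_{V,Z})$ depends only on $\lambda_{+}$, and the map $\lambda_{+}\mapsto(\lambda_{+},Z_{V,Z})^{m}$ is a harmonic polynomial of degree $m$ on $v_{+}$ (harmonicity follows from $(Z_{V,Z},Z_{V,Z})=0$). Treating $\lambda_{-}$ as contributing degree $0$, we are thus dealing with a harmonic polynomial of bidegree $(m,0)$ on $v_{+}\oplus v_{-}$; the denominator $(Y^{2})^{m}$ is precisely the normalization required to make this polynomial scale-invariant in $Z$ and to match Borcherds's convention.

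For part (i), the $T$-invariance is immediate: shifting $\tau\mapsto\tau+1$ multiplies the summand indexed by $\lambda\in L+\beta$ by $\mathbf{e}(\tfrac{\lambda_{+}^{2}+\lambda_{-}^{2}}{2})=\mathbf{e}(\tfrac{\lambda^{2}}{2})=\mathbf{e}(\tfrac{\beta^{2}}{2})$, which is exactly the eigenvalue of $\rho_{L}(T)$ on $e_{\beta}$. The $S$-transformation comes from Poisson summation on each coset $L+\beta$ combined with the Hermite--Weber formula for the Fourier transform of a Gaussian times a harmonic polynomial; for a polynomial of bidegree $(m,0)$ this yields the Weil-representation factor $\rho_{L}(S)$ together with an automorphy factor of the form $(c\tau+d)^{1+m}(\overline{c\tau+d})^{b_{-}/2}$. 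Multiplying through by $y^{b_{-}/2}$ cancels $|c\tau+d|^{b_{-}}$ and leaves weight $1-\tfrac{b_{-}}{2}+m$, as claimed. This is a direct application of the theta transformation formula of Section 4 of \cite{[B]} (see also Section 3.2 of \cite{[Bru]} and the setup in \cite{[Ze3]}).

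For part (ii) fix $\tau$, let $M$ lie in the discriminant kernel, and substitute $\lambda\mapsto M\lambda$ in the sum defining $\theta_{L+\beta,m,m,0}(\tau,MZ)$. Since $M$ preserves the coset $L+\beta$ and the decomposition $v_{\pm}$ transforms equivariantly, $(M\lambda)_{\pm}^{2}=\lambda_{\pm}^{2}$, so the Gaussian factor is unchanged. Using the cocycle $MZ_{V,Z}=J(M,Z)Z_{V,MZ}$ and the orthogonality of $M$, one obtains $(M\lambda,Z_{V,MZ})=J(M,Z)^{-1}(\lambda,Z_{V,Z})$; and applying $M$ to the identity $(Z_{V,Z},\overline{Z_{V,Z}})=2Y^{2}$ gives $\Im(MZ)^{2}=Y^{2}/|J(M,Z)|^{2}$, so the denominator $\Im(MZ)^{2m}$ becomes $Y^{2m}/|J(M,Z)|^{2m}$. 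Combining these yields $\theta_{L+\beta,m,m,0}(\tau,MZ)=\overline{J(M,Z)}^{m}\theta_{L+\beta,m,m,0}(\tau,Z)$, and taking complex conjugates produces the required automorphy $\Phi(MZ)=J(M,Z)^{m}\Phi(Z)$ of weight $m$.

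The main obstacle I anticipate is the bookkeeping in part (i): pinning down exactly which power of $y$ must accompany $\Theta_{L,m,m,0}$ so that Borcherds's transformation formula delivers the weight $1-\tfrac{b_{-}}{2}+m$ in the conventions of this paper, and verifying that the representation appearing is $\rho_{L}$ itself rather than its dual or complex conjugate. Once these normalization and sign conventions are matched to the framework of \cite{[B]}, the remainder is a direct appeal to the standard Poisson-summation/harmonic-polynomial theta formalism, while part (ii) requires no more than the identities recorded above.
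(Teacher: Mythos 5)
Your proposal is correct and follows essentially the same route as the paper, which simply cites Theorem 4.1 of \cite{[B]} for part $(i)$ (your identification of $(\lambda,Z_{V,Z})^{m}$ as a harmonic polynomial of bidegree $(m,0)$ is exactly why that theorem applies and gives weight $1+m-\frac{b_{-}}{2}$ after multiplying by $y^{b_{-}/2}$) and Proposition 2.3 of \cite{[Ze3]} together with complex conjugation for part $(ii)$ (your cocycle computation $MZ_{V,Z}=J(M,Z)Z_{V,MZ}$, $(\Im MZ)^{2}=Y^{2}/|J(M,Z)|^{2}$ is precisely that argument spelled out). The details you supply are all accurate; no gaps.
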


\begin{proof}
Part $(i)$ is just a special case of Theorem 4.1 of \cite{[B]}. Part $(ii)$
follows, for example, from Proposition 3.3 of \cite{[Ze3]} and the behavior of
these theta functions under complex conjugation. This proves the theorem.
\end{proof}

The main technical object of interest in \cite{[Ze3]} is the \emph{theta lift}
of the image $F$ of a weakly holomorphic modular form (or a harmonic weak
Maa\ss\ form with $\xi$-image a cusp form) $f$ of weight $\kappa=1-\frac{b_{-}}{2}-m$ and representation $\rho_{L}$ under the $m$-fold weight raising operator $\frac{1}{(2\pi i)^{m}}\delta_{\kappa}^{m}$. The function $F$ has weight $1-\frac{b_{-}}{2}+m$ and eigenvalue $-\frac{mb_{-}}{2}$. Moreover, one has
\begin{lem}
Every modular form of weight $1-\frac{b_{-}}{2}+m$ and eigenvalue
$-\frac{mb_{-}}{2}$ is the $\delta_{\kappa}^{m}$-image of a harmonic weak Maa\ss\ form of weight $\kappa=1-\frac{b_{-}}{2}-m$. \label{deltamiso}
\end{lem}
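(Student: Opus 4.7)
The plan is to construct $f$ mode-by-mode from the Fourier expansion of $F$. Let $k':=1-\tfrac{b_{-}}{2}-m$ so that $k_{0}:=k'+2m$, and set $s:=1-\tfrac{k'}{2}$. Using Proposition~\ref{eigen} one checks
\[\tfrac{k_{0}(k_{0}-2)}{4}+s(1-s)=\tfrac{k_{0}(k_{0}-2)-k'(k'-2)}{4}=-\tfrac{mb_{-}}{2},\]
while the same $s$ produces eigenvalue $0$ at weight $k'$; hence any harmonic preimage at weight $k'$ would be matched in eigenvalue after each application of $\delta$.

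Write $F=\sum_{\gamma\in D_{L},r}c_{\gamma,r}(y)\mathbf{e}(rx)e_{\gamma}$. For each mode with $r\neq 0$, the coefficient $c_{\gamma,r}(y)$ solves the weight-$k_{0}$ eigenvalue ODE and so lies in the two-dimensional space spanned by $\mathcal{M}_{k_{0},s}(4\pi|r|y)$ and its companion Whittaker-$W$-based eigenfunction; the analogous harmonic space at weight $k'$ is spanned by $\mathcal{M}_{k',s}(4\pi|r|y)$ and its companion. Iterating Proposition~\ref{deltakPSgen} $m$ times sends $\mathbf{e}(rx)\mathcal{M}_{k',s}(4\pi|r|y)$ to $(-|r|)^{m}\,m!$ times $\mathbf{e}(rx)\mathcal{M}_{k_{0},s}(4\pi|r|y)$, since the product $\prod_{j=0}^{m-1}(s+\tfrac{k'}{2}+j)$ collapses to $m!$ at our $s$; a parallel identity derived from the standard $W$-Whittaker recurrence shows that $\delta^{m}$ sends the weight-$k'$ $W$-eigenfunction to a nonzero multiple of its weight-$k_{0}$ analog. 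For $r=0$ the eigenvalue ODE becomes the Euler equation whose weight-$k_{0}$ solutions are $y^{b_{-}/2}$ and $y^{-m}$, whereas at weight $k'$ the harmonic solutions are $1$ and $y^{1-k'}$; a direct computation yields $\delta^{m}(y^{1-k'})=\tfrac{m!}{(2i)^{m}}y^{b_{-}/2}$ and $\delta^{m}(1)=\prod_{j=0}^{m-1}\tfrac{k'+j}{2i}\,y^{-m}$, the latter product nonzero provided $b_{-}>0$. Altogether $\delta^{m}$ gives a mode-by-mode bijection between the two-dimensional harmonic spaces at weight $k'$ and the eigenvalue-$(-\tfrac{mb_{-}}{2})$ spaces at weight $k_{0}$.

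Define $f:=\sum_{\gamma,r}c'_{\gamma,r}(y)\mathbf{e}(rx)e_{\gamma}$, with each $c'_{\gamma,r}$ the unique harmonic preimage of $c_{\gamma,r}$. Convergence is inherited from $F$ (in fact improved, since inverting $\delta^{m}$ divides each nontrivial Whittaker coefficient by a factor growing like $|r|^{m}$), and the principal part of $f$ is finite because the principal part of $F$ was; thus $f$ is a bona fide harmonic weak Maa\ss\ form of weight $k'$ with $\delta^{m}f=F$. For modularity, note that for any $A\in Mp_{2}(\mathbb{Z})$ the functions $f[A]_{k',\rho_{L}}$ and $f$ are both harmonic, while $\delta^{m}(f[A]_{k',\rho_{L}}-f)=F[A]_{k_{0},\rho_{L}}-F=0$; the mode-by-mode bijectivity established above forces the kernel of $\delta^{m}$ on weight-$k'$ harmonic forms to be trivial, whence $f[A]_{k',\rho_{L}}=f$.

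The main obstacle is the $W$-Whittaker analog of Proposition~\ref{deltakPSgen}: a direct computation with the Whittaker $W$-function recurrence is needed to verify that the proportionality constant produced by $m$ successive applications of $\delta$ to the companion eigenfunction does not vanish at $s=1-\tfrac{k'}{2}$. Handling the edge case $b_{-}=0$ at the $r=0$ mode (where the Euler-equation roots coincide with those at weight $k'$) may also require a separate argument.
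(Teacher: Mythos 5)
Your mode-by-mode strategy is viable in principle, but as written it has two genuine gaps, and the paper's own proof sidesteps both with a much shorter argument. The first gap is the one you flag yourself: everything hinges on $\delta^{m}$ carrying the companion ($W$-Whittaker) eigenfunction at weight $k'$ to a \emph{nonzero} multiple of its weight-$k_{0}$ analogue, and this is asserted rather than proved. Without it you have neither surjectivity of $\delta^{m}$ onto each two-dimensional mode space nor the injectivity you later invoke, so the construction of $f$ and the modularity argument both collapse. (One can check that the only way an intermediate $\delta_{k'+2j}$ could kill a vector in the relevant eigenspace is for $y^{-(k'+2j)}\mathbf{e}(rx)e^{2\pi ry}$ to have the eigenvalue $j(k'+j-1)$, which forces $j=-k'=m-1+\tfrac{b_{-}}{2}$; so the nonvanishing does hold for $b_{-}>0$, but this verification is the actual content of the lemma and is missing.) The second gap is in the modularity step: for general $A\in Mp_{2}(\mathbb{Z})$ the difference $f[A]_{k',\rho_{L}}-f$ is harmonic but is \emph{not} $T$-periodic, hence has no Fourier expansion in $\mathbf{e}(rx)$, and your ``mode-by-mode bijectivity'' says nothing about it. What you need is injectivity of $\delta^{m}$ on all harmonic weight-$k'$ functions, which your argument does not supply. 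The term-by-term convergence of the preimage is likewise asserted rather than proved, and Proposition \ref{deltakPSgen} as stated only covers $r<0$.

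The paper resolves all of this at once by exhibiting an explicit two-sided inverse: it checks directly that both compositions of $\delta_{1-\frac{b_{-}}{2}-m}^{m}$ with $(4y^{2}\partial_{\overline{\tau}})^{m}$ act as the scalar $m!\,\Gamma\big(m+\tfrac{b_{-}}{2}\big)/\Gamma\big(\tfrac{b_{-}}{2}\big)$ on the two spaces in question. Taking $f$ to be $(4y^{2}\partial_{\overline{\tau}})^{m}F$ divided by this scalar then makes modularity, harmonicity, the growth condition, and convergence automatic (the weight lowering operator preserves all of them), and the same identity read in the other order gives exactly the global injectivity your modularity step needs. The nonvanishing of that scalar for $b_{-}>0$ is precisely the global form of the $W$-Whittaker nonvanishing you were missing. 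If you wish to keep a Fourier-theoretic presentation, the cleanest repair is to prove the single composition identity $(4y^{2}\partial_{\overline{\tau}})^{m}\delta^{m}=c\cdot\mathrm{id}$ on harmonic forms from the commutation relation between $\delta_{k}$, $y^{2}\partial_{\overline{\tau}}$, and $\Delta_{k}$ (as is done, in a closely related situation, in the proof of Proposition \ref{V2mpreim}); this replaces the $W$-Whittaker recurrence entirely and fixes the modularity argument as well.
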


\begin{proof}
We have the map $\delta_{\kappa}^{m}$ from harmonic weak Maa\ss\ forms of weight $\kappa$ to modular form of weight $1-\frac{b_{-}}{2}+m$ and eigenvalue $-\frac{mb_{-}}{2}$. In addition, consider the map $(4y^{2}\partial_{\overline{\tau}})^{m}$ in the other direction. A direct evaluation shows that both compositions yield the corresponding identity maps, multiplied by the scalar $m!\Gamma\big(m+\frac{b_{-}}{2}\big)/\Gamma\big(\frac{b_{-}}{2}\big)$. This
immediately implies the assertion of the lemma.
\end{proof}

The theta lift of $F$ is essentially the Petersson inner product of $F$ with
$y^{b_{-}/2}\Theta_{L,m,m,0}$ as a function of $Z$. However, the resulting
integral does not converge because of the exponential growth of $F$ as
$y\mapsto\infty$, and has to be regularized. \cite{[B]} and \cite{[Bru]}
suggest two ways to do this, both of which are based on carrying out the
integration over the fundamental domain
\[D=\big\{\tau\in\mathcal{H}\big||\tau|\geq1,\ |x|\leq1/2\big\}\] first over
$x$ and then over $y$. More precisely, one defines the truncated fundamental
domain \[D_{R}=\big\{\tau \in D\big|y \leq H\big\}\] for $R\geq1$, which is
compact and on which the integral of a smooth function converges, and considers
the limit
\begin{equation}
\Phi_{L,m,m,0}(Z,F)=\lim_{R\to\infty}\int_{D_{R}}\big\langle
F(\tau),\Theta_{L,m,m,0}(\tau,Z) \big\rangle y^{m+1}\frac{dxdy}{y^{2}}.
\label{lift}
\end{equation}
However, this limit does not always exist. Given $\lambda \in L^{*}$ with
$\lambda^{2}=0$, the sub-Grassmannian \[\lambda^{\perp}=\big\{v \in
G(L_{\mathbb{R}})\big|\lambda \in v_{-}\big\}\] is a complex sub-manifold of
$G(L_{\mathbb{R}})$ of codimension 1. Then \cite{[Bru]} considers, for a
Poincar\'{e} series, the limit from Equation \eqref{lift} as a function of $s$
(this exists for $s$ in some right half plane, provided that $Z$ does not
belong to a specific $\lambda^{\perp}$), meromorphically continues it, and takes
the constant term at the required value of $s$. In fact, in \cite{[Bru]} only
the case $k=1-\frac{b_{-}}{2}$ (with a vector $\omega=e_{\beta}+e_{-\beta}$) is
considered, but the theory works for more general weights. On the other hand,
\cite{[B]} multiplies the integrand in Equation \eqref{lift} by $y^{-\tilde{s}}$
for another variable $\tilde{s}$, obtains a holomorphic function of $\tilde{s}$
in some right half plane, and again uses a meromorphic continuation and takes
the constant term at $\tilde{s}=0$. Since $m>0$ and the term from $\lambda=0$
does not contribute to the theta function, an argument similar to the proof of
Theorem 3.9 of \cite{[Ze3]} or to the proof of Proposition 2.8 of \cite{[Bru]}
(modified to suit our theta function) shows
\begin{prop}
The limit in Equation \eqref{lift} exists wherever $F$ is defined as
$\frac{1}{(2\pi i)^{m}}\delta_{\kappa}^{m}f$ for $f$ harmonic of weight $\kappa=1-\frac{b_{-}}{2}-m$ with $\xi_{\kappa}f$ cuspidal. It also exists wherever $F$ is $F_{m,r,\beta}^{L}(\tau,s)$ with $\Re s>1$ and $Z$ does not belong to any $\lambda^{\perp}$ for $\lambda \in L+\beta$ with $\lambda^{2}=-2m$. Moreover, if $F=\frac{1}{(2\pi i)^{m}}\delta_{\kappa}^{m}f$ for $f$ the harmonic Poincar\'{e} series $F_{-m,r,\beta}^{L}\big(\tau,\frac{1}{2}+\frac{b_{-}}{4}+\frac{m}{2}\big)$, and
$Z$ does not lie on any such $\lambda^{\perp}$, then the regularizations of
\cite{[B]} and \cite{[Bru]} coincide. \label{tworegs}
\end{prop}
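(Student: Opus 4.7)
The plan is to reduce the two existence claims to a Fourier expansion analysis on the cusp region of $D$, and to treat the equality of regularizations as a consequence of meromorphic continuation in auxiliary parameters. All convergence issues live at $y\to\infty$, so I would split $D = D_{H_0}\cup (D\setminus D_{H_0})$ for some large $H_0$; on the bounded piece the integrand is smooth and no regularization is needed. On the cusp region $y\ge H_0$ I would expand both $\Theta_{L,m,m,0}(\tau,Z)$ and $F(\tau)$ in Fourier series in $x$ and perform the $x$-integration, which annihilates all terms except the resonant ones.

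The essential point, exactly as in the proof of Theorem 2.9 of \cite{[Ze3]} and Proposition 2.8 of \cite{[Bru]}, is that the polynomial factor $(\lambda,Z_{V,Z})^{m}/(Y^{2})^{m}$ in $\Theta_{L,m,m,0}$ vanishes at $\lambda=0$ because $m>0$. This removes the only summand that would otherwise give a non-exponential contribution to the $y$-integral. For $F=(2\pi i)^{-m}\delta_{1-b_{-}/2-m}^{m}f$ with $f$ harmonic and $\xi_{1-b_{-}/2-m}f$ cuspidal, the principal part of $f$ is a finite sum of $q^{r}$ with $r\le 0$ and $f^{-}$ decays exponentially, so after applying $\delta^{m}$ only finitely many Fourier frequencies of $F$ could cause trouble, and for each of them the absence of the $\lambda=0$ term is enough to force exponential decay in $y$.

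For $F=F_{m,r,\beta}^{L}(\tau,s)$ I would first unfold against $\langle T\rangle\backslash Mp_{2}(\mathbb{Z})$, reducing the pairing to an integral over the strip $\{|x|\le 1/2,\ y>0\}$ with integrand $\mathbf{e}(rx)\mathcal{M}_{k,s}(4\pi|r|y)$ paired against a single Fourier coefficient of $\Theta_{L,m,m,0}$. After the $x$-integration, each surviving term carries an exponential factor $\mathbf{e}\bigl(r\tau+\tau\lambda_{+}^{2}/2+\overline{\tau}\lambda_{-}^{2}/2\bigr)$ against the $M$-Whittaker function. Exact cancellation of the exponential in $y$ happens precisely when $\lambda_{+}=0$, i.e.\ $Z\in\lambda^{\perp}$, for some $\lambda\in L+\beta$ whose norm is the distinguished value $\lambda^{2}=-2m$ dictated by matching the polynomial growth rate of $\mathcal{M}_{k,s}$; this is exactly the locus the statement excludes. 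Outside it, the growth of $\mathcal{M}_{k,s}$ is polynomial in $y$ while the surviving theta contribution decays exponentially, and convergence follows.

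The comparison of the two regularizations is the most delicate part. My plan is to view the Bruinier regularization as a meromorphic function of $s$ and the Borcherds regularization as a meromorphic function of $\tilde{s}$, and to locate a simultaneous region of $(s,\tilde{s})$ in which both integrals converge absolutely. In that region one carries out the $x$-integration as above and writes the remaining $y$-integral as a Mellin transform of the $M$-Whittaker function against a Gaussian, obtaining the same closed expression (a ratio of gamma factors times a Kloosterman-type sum over $\lambda\in L+\beta$) in both cases, so the two regularizations literally coincide as functions there. By uniqueness of meromorphic continuation their values at the distinguished specializations (at $s=\tfrac12+\tfrac{b_{-}}{4}+\tfrac{m}{2}$ for \cite{[Bru]} and at $\tilde{s}=0$ for \cite{[B]}) must also coincide. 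The main obstacle is careful bookkeeping of the gamma factors to rule out the appearance of spurious poles along either continuation path that could shift the extracted constant term; once the Mellin identity for $M_{-k/2,s-1/2}$ against a Gaussian is written out, this becomes a routine verification.
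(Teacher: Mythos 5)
Your proposal rests on exactly the observation that carries the paper's (essentially one-sentence) proof: because $m>0$, the factor $(\lambda,Z_{V,Z})^{m}/(Y^{2})^{m}$ annihilates the $\lambda=0$ summand of $\Theta_{L,m,m,0}$, and in the arguments of Theorem 2.9 of \cite{[Ze3]} and Proposition 2.8 of \cite{[Bru]} that summand is the unique source both of the divergence of the limit in \eqref{lift} and of the pole that could make the two regularizations differ. In substance your route is the paper's route, fleshed out.

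Two places where your sketch is weaker than it needs to be. First, for $F=F_{m,r,\beta}^{L}(\tau,s)$ you propose to unfold over $\langle T\rangle\backslash Mp_{2}(\mathbb{Z})$ before discussing convergence; unfolding is only justified for $\Re s$ large (the paper needs $\Re s>\frac{3}{2}+\frac{b_{-}}{4}+\frac{m}{2}$ in Theorem \ref{unflift}, because the theta function blows up like $y^{-1-m-b_{-}/2}$ as $y\to0^{+}$ on the translates of $D$), so it cannot underwrite the existence claim for all $\Re s>1$. Since the limit in \eqref{lift} only concerns $y\to\infty$ on $D$, where $y$ is bounded below, the right move is to use the Fourier expansion of the Poincar\'{e} series at $\infty$ directly: its only exponentially growing frequency is $r$, which after the $x$-integration meets only the $\lambda^{2}=2r$ terms of $\Theta_{L,m,m,0}$ (the selected norm comes from the $x$-integral, not from matching the growth rate of $\mathcal{M}_{k,s}$) and yields $e^{-2\pi y\lambda_{+}^{2}}$ times polynomials. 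Second, for the coincidence of the regularizations you defer the absence of spurious poles to a ``routine verification,'' but that verification is the entire content of the claim, and your proposed detour through closed-form Mellin evaluations is unnecessary (also, what the unfolding produces is the hypergeometric sum over $\lambda$ of Theorem \ref{unflift}, not a Kloosterman-type sum --- those occur only in the Fourier expansion of the lift). The clean argument is the one you already state at the outset: with the $\lambda=0$ term absent, the limit in \eqref{lift} exists at the relevant parameters, the integral is holomorphic in $\tilde{s}$ near $0$ and in $s$ near $\frac{1}{2}+\frac{b_{-}}{4}+\frac{m}{2}$, and both constant-term extractions therefore return the honest value of a convergent integral, so they agree for free.
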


Indeed, the pole appearing in Proposition 2.8 of \cite{[Bru]} arises from the
contribution of the term with $\lambda=0$, which vanishes in the case we
consider. Note that by Corollary \ref{deltakPoin}, the modular form $F$ in the
latter assertion of Proposition \ref{tworegs} is $(-1)^{m}m!|r|^{m}F_{+m,r,\beta}^{L}\big(\tau,\frac{1}{2}+\frac{b_{-}}{4}+\frac{m}{2}\big)$, where the second variable can also be written as $1-\frac{\kappa}{2}$. In addition, Propositions \ref{hwMfPoin} and \ref{tworegs} allow us to evaluate the theta lift of any $F=\frac{1}{(2\pi i)^{m}}\delta_{\kappa}^{m}f$ as linear combinations of the regularized integrals in the sense of \cite{[Bru]}. In any case, part $(ii)$ of Theorem \ref{theta} implies that as a functions of $Z$, the function $\Phi_{L,m,m,0}(Z,F)$ is, under any regularization, an automorphic form of weight $m$ with respect to the discriminant kernel of $L$. Moreover, Theorem 3.9 of \cite{[Ze3]} shows that it is an eigenfunction, with eigenvalue $-2mb_{-}$, with respect to (minus) the Laplacian of $G(L_{\mathbb{R}})$ given explicitly in that reference.

\section{Unfolding \label{UnfPoin}}

Let us now evaluate the theta lift
\[(-|r|)^{m}m!\frac{i^{m}}{2}\Phi_{L,m,m,0}\Bigg(Z,F_{m,r,\beta}^{L}\bigg(\cdot,
\frac{1}{2}+\frac{b_{-}}{4}+\frac{m}{2}\bigg)\Bigg),\] which we denote
$\Phi_{m,r,\beta}^{L}(Z)$. Theorem 3.9 of \cite{[Ze3]} describes this function
in terms of a Fourier expansion at a cusp (if cusps exist), and gives its
singularities. However, for our applications it will be more convenient to have
an alternative description, for which we use the unfolding method from Section
2.3 of \cite{[Bru]}.

Let
\[F(a,b,c;t)=\sum_{n=0}^{\infty}\frac{\Gamma(a+n)\Gamma(b+n)\Gamma(c)}{
\Gamma(a)\Gamma(b)\Gamma(c+n)}\cdot\frac{t^{n}}{n!}\] be the Gau\ss\
hypergeometric series, assuming that neither $-a$, nor $-b$, nor $-c$ are
natural numbers. Our theta lift is given in
\begin{thm}
The value of the theta lift
$\Phi_{L,m,m,0}\big(Z,F_{m,r,\beta}^{L}\big(\cdot,s)\big)$ equals the constant
$2(2|r|)^{s-\frac{1}{2}+\frac{b_{-}}{4}-\frac{m}{2}}\Gamma\big(s-\frac{1}{2}
+\frac{b_{-}}{4}+\frac{m}{2}\big)/\Gamma(2s)$ times \[\sum_{\lambda \in
L+\beta,\
\lambda^{2}=2r}\frac{(\lambda,\overline{Z_{V,Z}})^{m}/(Y^{2})^{m}}{(2\pi)^{m}
|\lambda_{-}^{2}|^{s-\frac{1}{2}+\frac{b_{-}}{4}+\frac{m}{2}}}F\bigg(s-\frac{1}{
2}+\frac{b_{-}}{4}+\frac{m}{2},s+\frac{1}{2}-\frac{b_{-}}{4}+\frac{m}{2},
2s;\frac{2|r|}{|\lambda_{-}^{2}|}\bigg),\] wherever $s\in\mathbb{C}$ satisfies
$\Re s>\frac{3}{2}+\frac{b_{-}}{4}+\frac{m}{2}$ and $Z \in K_{\mathbb{R}}+iC$
does not lie on any  $\lambda^{\perp}$ for $\lambda \in L+\beta$ with
$\lambda^{2}=2r$. \label{unflift}
\end{thm}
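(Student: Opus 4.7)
The plan is the standard unfolding of the Poincar\'{e} series $F_{m,r,\beta}^{L}(\tau,s)$ against the theta kernel $\Theta_{L,m,m,0}$, followed by explicit evaluation of the residual $y$-integral in terms of the Gau\ss\ hypergeometric function. Set $k=1-\frac{b_{-}}{2}+m$ and $\omega=e_{\beta}+(-1)^{m}e_{-\beta}$. I first restrict to the half-plane $\Re s>\frac{3}{2}+\frac{b_{-}}{4}+\frac{m}{2}$; there the Poincar\'{e} series converges absolutely and the integrand in Equation \eqref{lift} is absolutely integrable on $D$, so the regularizing limit $H\to\infty$ is unnecessary and Fubini permits the exchange of the coset sum with the integration. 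Using $Mp_{2}(\mathbb{Z})$-invariance of the pairing (which follows from Theorem \ref{theta}(i)), the integral over $D$ then unfolds to one over the strip $[-1/2,1/2]\times(0,\infty)$ of the seed $\mathbf{e}(rx)\mathcal{M}_{k,s}(4\pi|r|y)\omega$ paired with $\Theta_{L,m,m,0}(\tau,Z)$.

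For the $x$-integration, the $(L+\beta)$-component of $\overline{\Theta_{L,m,m,0}}$ contributes the factor $\mathbf{e}(-x\lambda^{2}/2)$ for each $\lambda\in L+\beta$, so integration against $\mathbf{e}(rx)$ extracts precisely those $\lambda$ with $\lambda^{2}=2r$. The $e_{-\beta}$-summand of $\omega$ produces an analogous sum over $L-\beta$; the substitution $\lambda\mapsto-\lambda$ together with the identity $(-\lambda,\overline{Z_{V,Z}})^{m}=(-1)^{m}(\lambda,\overline{Z_{V,Z}})^{m}$ identifies this sum with $(-1)^{m}$ times the one over $L+\beta$, and combined with the $(-1)^{m}$ built into $\omega$ it doubles the sum over $\lambda\in L+\beta$ with $\lambda^{2}=2r$. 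This is the origin of the leading factor $2$ in the statement. Each surviving term carries the $\tau$-independent prefactor $(\lambda,\overline{Z_{V,Z}})^{m}/(Y^{2})^{m}$ and the residual exponential $e^{-2\pi y(\lambda_{+}^{2}-\lambda_{-}^{2})/2}$.

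For the $y$-integration, the relation $\lambda_{+}^{2}+\lambda_{-}^{2}=2r$ rewrites $(\lambda_{+}^{2}-\lambda_{-}^{2})/2$ as $|\lambda_{-}^{2}|-|r|$. Substituting $t=4\pi|r|y$ and inserting the Kummer expansion $\mathcal{M}_{k,s}(t)=e^{-t/2}t^{s-k/2}{}_{1}F_{1}\big(s+\tfrac{k}{2};2s;t\big)$ cancels the two Gaussians against one another, reducing what remains to the classical Laplace transform
\[
\int_{0}^{\infty}t^{c-1}e^{-\alpha t}{}_{1}F_{1}(a;b;t)\,dt=\Gamma(c)\,\alpha^{-c}\,F(a,c;b;1/\alpha),
\]
valid for $\Re c>0$ and $\alpha>1$, with $c=s-\frac{1}{2}+\frac{b_{-}}{4}+\frac{m}{2}$, $a=s+\frac{1}{2}-\frac{b_{-}}{4}+\frac{m}{2}$, $b=2s$, and $\alpha=|\lambda_{-}^{2}|/(2|r|)$. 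The convergence condition $\alpha>1$ is exactly the requirement $Z\notin\lambda^{\perp}$, while $\Re c>0$ produces the stated lower bound on $\Re s$. Assembling the Jacobian $(4\pi|r|)^{-m}$ of the change of variable with the factor $2$ from the previous step and the prefactor $\frac{1}{4\Gamma(2s)}$ from the definition of the Poincar\'{e} series then reproduces the precise constant announced in the theorem.

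The main delicate point is the justification of the unfolding in the presence of the regularization; working initially on the safe half-plane $\Re s>\frac{3}{2}+\frac{b_{-}}{4}+\frac{m}{2}$ sidesteps this issue, and since both sides of the identity continue meromorphically in $s$, the identity automatically propagates. It is reassuring that the same lower bound on $\Re s$ controls the convergence of the Poincar\'{e} series, of the $y$-integral, and of the Gau\ss\ hypergeometric series appearing in the statement, so that this range is a natural stopping point.
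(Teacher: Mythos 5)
Your overall strategy is the same as the paper's (unfold the Poincar\'{e} series, integrate over $x$ to isolate $\lambda^{2}=2r$, evaluate the remaining $y$-integral as a Laplace transform producing the Gau\ss\ hypergeometric function), and the computational core is right: the Kummer expansion $\mathcal{M}_{k,s}(t)=e^{-t/2}t^{s-k/2}{}_{1}F_{1}(s+\frac{k}{2};2s;t)$ together with the term-by-term Laplace transform is a legitimate, self-contained substitute for the table entry (Equation (11), p.~215 of \cite{[EMOT2]}) that the paper cites, and your parameters $a$, $c$, $b$, $\alpha$ match the paper's after using the symmetry of $F$ in its first two arguments. However, there is a genuine gap in your justification of the unfolding. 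The claim that for $\Re s>\frac{3}{2}+\frac{b_{-}}{4}+\frac{m}{2}$ ``the integrand in Equation \eqref{lift} is absolutely integrable on $D$, so the regularizing limit $H\to\infty$ is unnecessary and Fubini permits the exchange'' is false in general. As $y\to\infty$ on $D$ the seed contributes growth of order $e^{2\pi|r|y}$, while before the $x$-integration the theta function only decays like $e^{-\pi y\min_{0\neq\lambda}(\lambda_{+}^{2}+|\lambda_{-}^{2}|)}$, and that minimum can be smaller than $2|r|$; increasing $\Re s$ does nothing to fix this, since the exponential growth of $M_{-k/2,s-1/2}$ at infinity is independent of $s$. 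The iterated structure of the regularization (integrate over $x$ first, then over $y\leq H$, then let $H\to\infty$) is therefore essential: only after the $x$-integration do the surviving terms acquire the decay $e^{-2\pi y(|\lambda_{-}^{2}|-2|r|)}$, whose positivity is exactly the hypothesis $Z\notin\lambda^{\perp}$. You do invoke this condition later (as $\alpha>1$), but at that point the damage is done, because your appeal to Fubini on $D$ has already assumed a convergence that does not hold.

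A second, related omission is that you never identify the actual source of the bound $\Re s>\frac{3}{2}+\frac{b_{-}}{4}+\frac{m}{2}$. It does not come from the convergence of the Poincar\'{e} series (which needs only $\Re s>1$), nor from the $y$-integral near $0$ (which needs only $\Re c>0$), nor from the hypergeometric series (which converges for any parameters at argument $<1$). It comes from controlling the tail of the unfolded sum over the cosets $A\notin\langle T\rangle$, whose images $AD_{R}$ accumulate at $y=0$: one needs the estimate $\theta_{L,m,m,0}(\tau,Z)=O(y^{-1-m-b_{-}/2})$ uniformly in $x$ as $y\to0^{+}$ (the analogue of Lemma 2.13 of \cite{[Bru]}), which makes the integrand $O\big(y^{s-\frac{m}{2}-\frac{b_{-}}{4}-\frac{5}{2}}\big)$ and hence integrable near $y=0$ precisely in the stated half-plane. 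Without this step the interchange of the infinite coset sum with the limit $R\to\infty$ is unjustified. Finally, a bookkeeping caution: the ingredients you list ($\frac{1}{4\Gamma(2s)}$, the factor $2$ from $e_{-\beta}$, and $(4\pi|r|)^{-m}$) assemble to $\frac{1}{2\Gamma(2s)}(4\pi|r|)^{-m}$ times the $\lambda$-sum, which is off by a factor of $4$; the missing factor comes from the four central elements of $Mp_{2}(\mathbb{Z})$ lying over each element of $PSL_{2}(\mathbb{Z})$, each of which acts trivially on the seed, so that the sum over $\langle T\rangle\backslash Mp_{2}(\mathbb{Z})$ is four times the sum over $\langle T\rangle\backslash PSL_{2}(\mathbb{Z})$ that actually tiles the strip.
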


\begin{proof}
The proof follows Theorem 2.14 of \cite{[Bru]}. The expression from Equation
\eqref{lift} becomes $\frac{1}{4\Gamma(2s)}$ times the limit of
\[\int_{D_{R}}\sum_{A\in\langle T \rangle \backslash
Mp_{2}(\mathbb{Z})}\big\langle\big[\mathrm{e}(rx)\mathcal{M}_{k,s}
(4\pi|r|y)\omega\big][A]_{k,\rho_{L}}(\tau),\Theta_{L,m,m,0}(\tau,Z)\big\rangle
y^{m+1}\frac{dxdy}{y^{2}}\] as $R\to\infty$, where $k=1-\frac{b_{-}}{2}+m$ and
$\omega=e_{\beta}+(-1)^{m}e_{-\beta}$. Part $(i)$ of Theorem \ref{theta} now
shows that for every $A$ the latter integrand can be written as
\[\bigg\langle\frac{\mathrm{e}(r\Re A\tau)\mathcal{M}_{k,s}(4\pi|r|\Im
A\tau)}{j(A,\tau)^{k}}\rho_{L}^{-1}(A)\omega,\frac{\rho_{L}^{-1}(A)\Theta_{L,m,m,0}(A\tau,Z)}{j(A,\tau)^{k}|j(A,\tau)|^{b_{-}}}\bigg\rangle
y^{m+1}\frac{dxdy}{y^{2}}.\] As the side of $\Theta$ is conjugated, the value of $k$ shows that the power of $y$ and the $j(A,\tau)$ factors become just $(\Im
A\tau)^{m+1}$. The fact that $\rho_{L}$ is unitary allows us to make the change
of variables to $A\tau$ (which we choose such that $|\Re A\tau|\leq\frac{1}{2}$), and obtain that $\Phi_{L,m,m,0}\big(Z,F_{m,r,\beta}^{L}\big(\cdot,s)\big)$ equals
$\frac{1}{4\Gamma(2s)}$ times \[\lim_{R\to\infty}\sum_{A\in\langle T \rangle \backslash Mp_{2}(\mathbb{Z})}\int_{AD_{R}}\big\langle\mathbf{e}(rx)\mathcal{M}_{k,s}(4\pi|r|y)\omega,\Theta_{L,m,m,0}(\tau,Z)\big\rangle y^{m+1}\frac{dxdy}{y^{2}}.\]
Since $\theta_{L-\beta,m,m,0}=(-1)^{m}\theta_{L+\beta,m,m,0}$ and the action of
$Mp_{2}(\mathbb{Z})$ on $\mathcal{H}$ factors through $PSL_{2}(\mathbb{Z})$,
this integral becomes just \[\frac{2}{\Gamma(2s)}\lim_{R\to\infty}\sum_{A\in\langle T \rangle \backslash PSL_{2}(\mathbb{Z})}\int_{AD_{R}}\mathbf{e}(rx)\mathcal{M}_{k,s}(4\pi|r|y)\overline{\theta_{L+\beta,m,m,0}(\tau,Z)}y^{m-1}dxdy.\]

Now, the argument proving Lemma 2.13 of \cite{[Bru]} shows that for fixed $Z$,
the theta function $\theta_{L,m,m,0}$ is bounded by a constant times
$y^{-1-m-\frac{b_{-}}{2}}$ as $y\to0^{+}$, uniformly in $x$. The growth condition on $\mathcal{M}_{k,s}$ for $k=1-\frac{b_{-}}{2}+m$ as $y\to0^{+}$ thus shows that our integrand is $O\big(y^{s-\frac{m}{2}-\frac{b_{-}}{4}-\frac{5}{2}}\big)$. Hence if $\Re s>\frac{3}{2}+\frac{b_{-}}{4}+\frac{m}{2}$ then the limit $R\to\infty$ of the sum of terms $A\not\in\langle T \rangle$ becomes just the integral over $\big\{\tau\in\mathcal{H}\big||x|\leq\frac{1}{2},\tau \not\in D\}$. We therefore evaluate $\frac{2}{\Gamma(2s)}$ times the limit of
\[\int_{0}^{R}\!\int_{-1/2}^{1/2}\!\mathbf{e}(rx)\mathcal{M}_{k,s}(4\pi|r|y)\!\sum_{\lambda \in L+\beta}\!\frac{(\lambda,\overline{Z_{V,Z}})^{m}}{(Y^{2})^{m}}\mathbf{e}\bigg(-x\frac{\lambda^{2}}{2}\bigg)e^{-\pi
y(\lambda^{2}-2\lambda_{-}^{2})}y^{m-1}dxdy\] as $R\to\infty$. The integral over $x$ vanishes unless $\lambda^{2}=2r$, for which the exponent becomes $e^{-2\pi
y(r-\lambda_{-}^{2})}=e^{-2\pi y(|\lambda_{-}^{2}|-|r|)}$. Since
$\mathcal{M}_{k,s}(4\pi|r|y)=O(e^{2\pi|r|y})$ as $y\to\infty$ and
$Z\not\in\lambda^{\perp}$, the expression \[\mathcal{M}_{k,s}(4\pi|r|y)e^{-2\pi
y(|\lambda_{-}^{2}|-|r|)}=O(e^{-2\pi y(|\lambda_{-}^{2}|-2|r|)})\] still decays exponentially (as $|\lambda_{-}^{2}|=\lambda_{+}^{2}+2|r|>2|r|$ under our assumption on $Z$). Hence we may just take the upper limit to be $\infty$, and after plugging in the definition of $\mathcal{M}_{k,s}$ we get $2(4\pi|r|)^{-\frac{k}{2}}/\Gamma(2s)$ times
\[\sum_{\lambda \in L+\beta,\ \lambda^{2}=2r}\frac{(\lambda,\overline{Z_{V,Z}})^{m}}{(Y^{2})^{m}}\int_{0}^{\infty}M_{-\frac{k}{2},s-\frac{1}{2}}(4\pi|r|y)e^{-2\pi y(|\lambda_{-}^{2}|-|r|)}y^{m-1-\frac{k}{2}}dy.\] But putting $\alpha=4\pi|r|$, $p=2\pi|\lambda_{-}^{2}|-\frac{\alpha}{2}>\frac{\alpha}{2}$,
$\kappa=-\frac{k}{2}$, $\mu=s-\frac{1}{2}$, and $\nu=m-\frac{k}{2}$ in Equation
(11) on page 215 of \cite{[EMOT2]} shows that the latter integral equals
\[\frac{(4\pi|r|)^{s}\Gamma\big(s+m-\frac{k}{2}\big)}{(2\pi|\lambda_{-}^{2}|)^{s+m-\frac{k}{2}}}
F\bigg(s+m-\frac{k}{2},s+\frac{k}{2},2s;\frac{2|r|}{|\lambda_{-}^{2}|}\bigg).\] After one puts the external coefficient back in, cancels the powers of $2\pi$, and substitutes the value of $k$, this completes the proof of the theorem.
\end{proof}

Let \[B(p,q;T)=\int_{0}^{T}\xi^{p-1}(1-\xi)^{q-1}d\xi,\quad\Re p>0,\quad 0 \leq
T<1\] be the incomplete beta function. Theorem \ref{unflift} now has the
following
\begin{cor}
If $Z \in K_{\mathbb{R}}+iC$ does not lie on any $\lambda^{\perp}$ for $\lambda
\in L+\beta$ with $\lambda^{2}=2r$ then the function $\Phi_{m,r,\beta}^{L}$
attains at $Z$ the value \[(-i)^{m}m!\sum_{\lambda \in L+\beta,\
\lambda^{2}=2r}\frac{(\lambda,\overline{Z_{V,Z}})^{m}}{(4\pi
Y^{2})^{m}}B\bigg(\frac{b_{-}}{2}+m,-m;\frac{2|r|}{|\lambda_{-}^{2}|}\bigg).\]
\label{PhiLmm0}
\end{cor}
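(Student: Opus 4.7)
The plan is to specialize Theorem \ref{unflift} to the value $s_{0}=\tfrac{1}{2}+\tfrac{b_{-}}{4}+\tfrac{m}{2}$ at which $F_{m,r,\beta}^{L}(\tau,s_{0})$ is the harmonic weak Maa\ss\ form used in the definition of $\Phi_{m,r,\beta}^{L}$, then rewrite the resulting Gauss hypergeometric function as an incomplete beta function via a standard identity, and perform the bookkeeping of the remaining constants.

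First I would substitute $s=s_{0}$ into the formula of Theorem \ref{unflift}. The three parameters of $F$ simplify to $\big(\tfrac{b_{-}}{2}+m,\,m+1,\,\tfrac{b_{-}}{2}+m+1\big)$, which is precisely of the form $F(p,1-q;p+1;T)$ with $p=\tfrac{b_{-}}{2}+m$, $q=-m$, and $T=2|r|/|\lambda_{-}^{2}|$. The Gamma ratio in the prefactor collapses to $\Gamma\big(\tfrac{b_{-}}{2}+m\big)/\Gamma\big(\tfrac{b_{-}}{2}+m+1\big)=1/\big(\tfrac{b_{-}}{2}+m\big)$, and the external power of $2|r|$ reduces to $(2|r|)^{b_{-}/2}$.

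Next I would invoke the classical identity
\[
B(p,q;T)=\frac{T^{p}}{p}\,F(p,1-q;p+1;T),
\]
obtained by expanding $(1-\xi)^{q-1}$ in its binomial series, integrating termwise, and applying $(p)_{n}/(p+1)_{n}=p/(p+n)$ to recognize the result as a hypergeometric series. Solving for $F$ replaces $|\lambda_{-}^{2}|^{-(b_{-}/2+m)}$ by $(2|r|)^{-(b_{-}/2+m)}$ and introduces $B\big(\tfrac{b_{-}}{2}+m,-m;\,2|r|/|\lambda_{-}^{2}|\big)$. The factor $\tfrac{b_{-}}{2}+m$ produced by this identity cancels against the Gamma ratio, the surviving powers of $2|r|$ collapse to $(2|r|)^{-m}$, and together with $(2\pi)^{-m}(Y^{2})^{-m}$ they assemble into the denominator $(4\pi|r|Y^{2})^{m}$. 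Finally, multiplying by the external $(-|r|)^{m}m!i^{m}/2$ appearing in the definition of $\Phi_{m,r,\beta}^{L}$ absorbs the residual $|r|^{m}$ and, using $(-1)^{m}i^{m}=(-i)^{m}$, yields the stated coefficient $(-i)^{m}m!$.

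The main obstacle is not the algebraic manipulation but the range of validity: Theorem \ref{unflift} is established only for $\Re s>\tfrac{3}{2}+\tfrac{b_{-}}{4}+\tfrac{m}{2}$, whereas $s_{0}$ lies strictly below this bound. I would handle this by analytic continuation. Both sides are meromorphic in $s$: the left-hand side via the meromorphic continuation underlying the regularization of \cite{[Bru]} (and holomorphic at $s_{0}$ away from the excluded sub-Grassmannians $\lambda^{\perp}$ by Proposition \ref{tworegs}), and the right-hand side evidently so, since the hypergeometric series with the specialized parameters terminates considerations to an explicit closed form in $T$. Uniqueness of meromorphic continuation then propagates the equality of Theorem \ref{unflift} to $s=s_{0}$, completing the proof.
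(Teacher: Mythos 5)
Your proposal is correct and follows essentially the same route as the paper's own proof: substitute $s=\tfrac{1}{2}+\tfrac{b_{-}}{4}+\tfrac{m}{2}$ into Theorem \ref{unflift} by analytic continuation, recognize the resulting $F\big(\tfrac{b_{-}}{2}+m,1+m,1+\tfrac{b_{-}}{2}+m;T\big)$ as $\tfrac{p}{T^{p}}B(p,q;T)$, and track the cancellation of the constants. The only difference is cosmetic: you derive the beta--hypergeometric identity by termwise integration, whereas the paper simply cites \cite{[EMOT1]}.
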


\begin{proof}
Recall that $\Phi_{m,r,\beta}^{L}(Z)$ is obtained by substituting
$s=\frac{1}{2}+\frac{b_{-}}{4}+\frac{m}{2}$ in the function from Theorem
\ref{unflift}, and multiplying the result by the constant
$\frac{(-i|r|)^{m}m!}{2}=\frac{(-i)^{m}(2|r|)^{m}m!}{2^{m+1}}$. This value of
$s$ does not lie in the domain considered in Theorem \ref{unflift}, but
substitution is possible due to analytic continuation. It follows that
$\Phi_{m,r,\beta}^{L}(Z)$ equals
\[\frac{(-i)^{m}m!}{\frac{b_{-}}{2}+m}\!\!\!\sum_{\lambda \in L+\beta,\
\lambda^{2}=2r}\!\!\!\frac{(2|r|)^{\frac{b_{-}}{2}+m}(\lambda,\overline{Z_{V,Z}}
)^{m}}{|\lambda_{-}^{2}|^{\frac{b_{-}}{2}+m}(4\pi
Y^{2})^{m}}F\bigg(\frac{b_{-}}{2}+m,1+m,1+\frac{b_{-}}{2}+m;\frac{2|r|}{
|\lambda_{-}^{2}|}\bigg)\] (the denominator $\frac{b_{-}}{2}+m$ arises from the
quotient $\Gamma\big(s-\frac{1}{2}+\frac{b_{-}}{4}+\frac{m}{2}\big)/\Gamma(2s)$
with our value of $s$, due to the classical functional equation of the gamma
function). But a hypergeometric series of the form $F(p,1-q,p+1;T)$ (with $\Re
p>0$) can be written as $\frac{p}{T^{p}}B(p,q;T)$ by the formula appearing in
\cite{[EMOT1]}, Subsection 2.5.4, page 87. When we substitute this in the latter
equation, the two occurrences of $\frac{b_{-}}{2}+m$ and the powers of $2|r|$
and $|\lambda_{-}^{2}|$ cancel out, and we get the desired expression. This
proves the corollary.
\end{proof}

\section{The Case $b_{-}=1$ \label{Dim1}}

We now consider the case of signature $(2,1)$. We may then assume that $L$ is a
lattice in the real quadratic space $M_{2}(\mathbb{R})_{0}$ of traceless
$2\times2$ matrices, in which the norm of a matrix $U$ is $-2\det U$ and the
pairing of $U$ and $V$ is $Tr(UV)$. The action of $SL_{2}(\mathbb{R})$ by
conjugation yields an isomorphism between $PSL_{2}(\mathbb{R})$ and the
connected component of the identity of $O(L_{\mathbb{R}}) \cong O(2,1)$. If we
choose $z$ to be the isotropic vector $\binom{0\ \ 1}{0\ \ 0}$ and $\zeta$ as
$\binom{0\ \ h}{1\ \ 0}$ for some $h\in\mathbb{R}$ (which equals
$\frac{\zeta^{2}}{2}$) then $K_{\mathbb{R}}$ is the one-dimensional space of
matrices of the form $\binom{u\ \ \ \ 0}{0\ \ -u}$ (with norm $2u^{2}$), $C$
consists of such matrices with $u>0$, and $G(L_{\mathbb{R}})$ is isomorphic to
$\mathcal{H}$. For $z=u+iv\in\mathcal{H}$ the isotropic vector $Z_{V,Z}$, which
we denote by $M_{z}$, is $\binom{z\ \ -z^{2}}{1\ \ -z\ }$, its complex conjugate is
$M_{\overline{z}}=\binom{\overline{z}\ \ -\overline{z}^{2}}{1\ \ -\overline{z}\
}$, and the corresponding negative definite space (the orthogonal complement of
the real and imaginary parts of $M_{z}$) is spanned by the norm $-2$ vector
$J_{z}=\frac{1}{v}\binom{u\ \ -|z|^{2}}{1\ \ \ -u\ }$.

The following expressions and evaluations will turn out useful for examining
this case as well as relating it to other references (in particular
\cite{[BK]}):
\begin{lem}
$(i)$ For $\lambda=\binom{b/2\ \ \ c\ \ }{-a\ \ -b/2} \in L^{*}$ one has
\[(\lambda,M_{z})=az^{2}+bz+c,\quad(\lambda,M_{\overline{z}})=a\overline{z}^{2}
+b\overline{z}+c,\quad\mathrm{and}\quad
(\lambda,J_{z})=\frac{a|z|^{2}+bu+c}{v}.\] $(ii)$ The weight raising operators
act via
\[\delta_{2}\frac{(\lambda,M_{\overline{z}})}{v^{2}}=0,\quad\delta_{0}(\lambda,
J_{z})=\frac{i(\lambda,M_{\overline{z}})}{2v^{2}},\quad\mathrm{and}\quad\delta_{
-2}(\lambda,M_{z})=i(\lambda,J_{z}).\] $(iii)$ The action of the weight lowering
operator is by \[v^{2}\partial_{\overline{z}}(\lambda,M_{z})=0,\quad
v^{2}\partial_{\overline{z}}(\lambda,J_{z})=-\frac{i}{2}(\lambda,M_{z}),
\quad\mathrm{and}\quad
v^{2}\partial_{\overline{z}}\frac{(\lambda,M_{\overline{z}})}{v^{2}}=-i(\lambda,
J_{z}).\] $(iv)$ $\lambda^{2}=\frac{b^{2}-4ac}{2}$ and
$\lambda_{-}^{2}=-\frac{(\lambda,J_{z})^{2}}{2}$. \label{b-=1exp}
\end{lem}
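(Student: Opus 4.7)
The plan is to verify all four parts by direct computation from the explicit matrix presentations just introduced. First I would tackle parts $(i)$ and $(iv)$, which are purely algebraic. Writing out the $2\times 2$ products $\lambda M_{z}$, $\lambda M_{\overline{z}}$, and $\lambda J_{z}$ and taking traces via $(U,V)=Tr(UV)$ immediately yields the three pairings in $(i)$. For $(iv)$, the identity $\lambda^{2}=-2\det\lambda$ gives $(b^{2}-4ac)/2$ at once, while for $\lambda_{-}^{2}$ I would use that the negative part $v_{-}$ is the line $\mathbb{R}J_{z}$ with $J_{z}^{2}=-2$; the orthogonal projection formula then gives $\lambda_{-}=-\tfrac{1}{2}(\lambda,J_{z})J_{z}$, and squaring yields $\lambda_{-}^{2}=-\tfrac{1}{2}(\lambda,J_{z})^{2}$.

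For parts $(ii)$ and $(iii)$ I would substitute the expressions from $(i)$ into the definitions $\delta_{k}=\partial_{z}+k/(2iv)$ and $v^{2}\partial_{\overline{z}}$. The key observations are that $(\lambda,M_{z})=az^{2}+bz+c$ is holomorphic in $z$ while $(\lambda,M_{\overline{z}})$ is antiholomorphic, together with the elementary derivatives $\partial_{z}v=-i/2$, $\partial_{\overline{z}}v=i/2$, $\partial_{\overline{z}}|z|^{2}=z$, and $\partial_{\overline{z}}u=1/2$. Each of the six identities then reduces to a short manipulation: for instance, $\delta_{2}\big((\lambda,M_{\overline{z}})/v^{2}\big)$ vanishes because the $\partial_{z}$ hitting the factor $1/v^{2}$ produces a term that cancels exactly against the $1/(iv)$ correction, and $v^{2}\partial_{\overline{z}}(\lambda,J_{z})$ unpacks to $v(az+b/2)-\tfrac{i}{2}(a|z|^{2}+bu+c)$, which collapses to $-\tfrac{i}{2}(az^{2}+bz+c)$ after substituting $\overline{z}-z=-2iv$ and $u=(z+\overline{z})/2$.

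The main obstacle is not conceptual but bookkeeping: the factors of $i$, the various signs, and the powers of $v$ must all be tracked carefully, since a sign error in any one of the six differential identities would propagate through the later sections of the paper. I would therefore verify each identity independently rather than chaining them, and would cross-check internally by observing that consecutive application of a weight raising followed by a weight lowering operator on each of the three building-block functions $(\lambda,M_{z})$, $(\lambda,J_{z})$, and $(\lambda,M_{\overline{z}})/v^{2}$ must return a scalar multiple of the original, consistently with these functions being $\Delta_{k}$-eigenfunctions.
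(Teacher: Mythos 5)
Your proposal is correct and matches the paper's approach exactly: the paper's proof consists of the single remark that these are all straightforward direct calculations, with part $(iv)$ additionally using $J_{z}^{2}=-2$, which is precisely the projection argument you spell out. Your detailed verifications (trace computations for $(i)$, the elementary derivatives for $(ii)$ and $(iii)$, and $\lambda_{-}=-\tfrac{1}{2}(\lambda,J_{z})J_{z}$ for $(iv)$) all check out.
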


\begin{proof}
These are all simple, straightforward calculations, where part $(iv)$ uses also
the fact that $J_{z}^{2}=-2$.
\end{proof}

A vector $\lambda$ of negative norm $2r$ must be of the form
$\pm\sqrt{|r|}J_{w}$ for a unique $w=\sigma+it\in\mathcal{H}$. In this case we
have additional presentations for those appearing in part $(i)$ of Lemma
\ref{b-=1exp}:
\begin{lem}
If $\lambda=-\sqrt{|r|}J_{w}$ then $(\lambda,M_{z})$,
$(\lambda,M_{\overline{z}})$, and $(\lambda,J_{z})$ are
\[\sqrt{|r|}\frac{(z-w)(z-\overline{w})}{t},\quad\sqrt{|r|}\frac{(\overline{z}
-w)(\overline{z}-\overline{w})}{t},\quad\mathrm{and}\quad2\sqrt{|r|}\cosh
d(z,w),\] where $d(z,w)$ is the hyperbolic distance between $z$ and $w$.
\label{b-=1neg}
\end{lem}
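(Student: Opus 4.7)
The plan is to unpack the definition of $J_w$, plug the matrix entries of $\lambda=-\sqrt{|r|}J_w$ into the formulas from Lemma \ref{b-=1exp}$(i)$, and then do some algebraic tidying.

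First I would write $J_w=\frac{1}{t}\bigl(\begin{smallmatrix}\sigma & -|w|^{2}\\ 1 & -\sigma\end{smallmatrix}\bigr)$, so that matching with the template $\lambda=\bigl(\begin{smallmatrix}b/2 & c\\ -a & -b/2\end{smallmatrix}\bigr)$ gives $a=\sqrt{|r|}/t$, $b=-2\sqrt{|r|}\sigma/t$, and $c=\sqrt{|r|}|w|^{2}/t$. Substituting into $(\lambda,M_{z})=az^{2}+bz+c$ yields $\frac{\sqrt{|r|}}{t}\bigl(z^{2}-(w+\overline{w})z+w\overline{w}\bigr)$, and the factorization $z^{2}-(w+\overline{w})z+w\overline{w}=(z-w)(z-\overline{w})$ gives the first formula. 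The same computation with $z$ replaced by $\overline{z}$ produces the second.

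For the third, $(\lambda,J_{z})=(a|z|^{2}+bu+c)/v$ reads $\sqrt{|r|}\bigl(|z|^{2}-2\sigma u+|w|^{2}\bigr)/(tv)$, and a direct expansion rewrites the numerator as $(u-\sigma)^{2}+v^{2}+t^{2}=|z-w|^{2}+2vt$. The remaining step is to invoke the standard upper half-plane distance formula
\[
\cosh d(z,w)=1+\frac{|z-w|^{2}}{2vt}=\frac{|z-w|^{2}+2vt}{2vt},
\]
after which $(\lambda,J_{z})$ collapses to $2\sqrt{|r|}\cosh d(z,w)$, as claimed.

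None of these steps presents a real obstacle; the only non-routine ingredient is recognizing $(u-\sigma)^{2}+v^{2}+t^{2}=|z-w|^{2}+2vt$ and matching it with the hyperbolic cosine identity. Consistency with part $(iv)$ of Lemma \ref{b-=1exp} is a good sanity check: since $J_{w}^{2}=-2$ gives $\lambda^{2}=2|r|\cdot(-1)=-2|r|$ (so indeed $\lambda^{2}=2r$), and one can verify on the side that $(\lambda,J_{z})^{2}/2=2|r|\cosh^{2}d(z,w)$, which agrees with $-\lambda_{-}^{2}$ computed from $(\lambda,M_{z})(\lambda,M_{\overline{z}})$ via the identity $|\sinh d(z,w)|^{2}=|z-w|^{2}|z-\overline{w}|^{2}/(4vt\cdot vt)$ coming from the factorization above.
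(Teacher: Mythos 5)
Your proposal is correct and is essentially identical to the paper's proof, which likewise just reads off the entries $a=\sqrt{|r|}/t$, $b=-2\sqrt{|r|}\sigma/t$, $c=\sqrt{|r|}|w|^{2}/t$ and substitutes them into part $(i)$ of Lemma \ref{b-=1exp} together with Equation \eqref{coshd}. The extra consistency checks at the end are fine but not needed.
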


We recall that the hyperbolic cosine of the hyperbolic distance between two
points $z$ and $w$ in $\mathcal{H}$ is given by
\begin{equation}
\cosh
d(z,w)=1+\frac{|z-w|^{2}}{2vt}=\frac{|z|^{2}-2u\sigma+|w|^{2}}{2vt}=\frac{
|z-\overline{w}|^{2}}{2vt}-1. \label{coshd}
\end{equation}

\begin{proof}
This follows directly from part $(i)$ of Lemma \ref{b-=1exp}, since the entries
of $\lambda$ are $a=\frac{\sqrt{|r|}}{t}$, $b=-\frac{2\sqrt{|r|}}{t}\sigma$, and
$c=\frac{\sqrt{|r|}}{t}|w|^{2}$.
\end{proof}

Considering elements of negative norm $2r$ in $L$, we define $S_{\beta,r}$ to be
the set of $w\in\mathcal{H}$ such that $-\sqrt{|r|}J_{w} \in L+\beta$. Those
with the opposite sign belong to $S_{-\beta,r}$. We thus obtain the following
expression for the theta lift from Corollary \ref{PhiLmm0} for the case
$b_{-}=1$:

\begin{cor}
If $b_{-}=1$ then the function $\Phi_{m,r,\beta}^{L}$ from Corollary \ref{PhiLmm0} attains at a point $z\in\mathcal{H}$ the value
\[|r|^{m/2}m!\sum_{w \in S_{\beta,r}\cup(-1)^{m}S_{-\beta,r}}\frac{(\overline{z}-w)^{m}(\overline{z}-\overline{w})^{m}}{(8\pi
itv^{2})^{m}}B\bigg(m+\frac{1}{2},-m;\frac{1}{\cosh^{2}d(z,w)}\bigg),\] provided that $z \not\in S_{\beta,r} \cup S_{-\beta,r}$. \label{b-=1lift}
\end{cor}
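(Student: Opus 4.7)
The approach is a direct unpacking of the general formula of Corollary \ref{PhiLmm0} using the explicit expressions collected in Lemmas \ref{b-=1exp} and \ref{b-=1neg}. Under the identification of $G(L_{\mathbb{R}})$ with $\mathcal{H}$, the imaginary part of $Z$ is the matrix $Y=\binom{v\ \ \ \ 0}{0\ \ -v}$, whose norm $2v^{2}$ equals $Y^{2}$; hence $(4\pi Y^{2})^{m}=(8\pi v^{2})^{m}$, and the conjugate $\overline{Z_{V,Z}}$ is $M_{\overline{z}}$.

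Next I would parametrize the summation. Every $\lambda\in L+\beta$ with $\lambda^{2}=2r<0$ has the form $\pm\sqrt{|r|}J_{w}$ for a unique $w\in\mathcal{H}$: the minus sign occurs precisely when $w\in S_{\beta,r}$, and the plus sign precisely when $w\in S_{-\beta,r}$ (since then $-\lambda\in L-\beta$). Such a $\lambda$ lies in the negative definite line $v_{-}$ of $Z$, that is $Z\in\lambda^{\perp}$, precisely when it is proportional to $J_{z}$; as $J_{w}^{2}=J_{z}^{2}=-2$ the proportionality constant must be $\pm1$, and $v,t>0$ rules out the sign $-1$, so $Z\in\lambda^{\perp}$ if and only if $w=z$. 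This identifies the exclusion conditions in Corollary \ref{PhiLmm0} and in the present statement.

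For the individual summands, Lemma \ref{b-=1exp}$(iv)$ together with the formula for $(\lambda,J_{z})$ from Lemma \ref{b-=1neg} yields $|\lambda_{-}^{2}|=\tfrac{1}{2}(\lambda,J_{z})^{2}=2|r|\cosh^{2}d(z,w)$, so the argument of the incomplete beta function becomes $1/\cosh^{2}d(z,w)$. The formula for $(\lambda,M_{\overline{z}})$ from Lemma \ref{b-=1neg}, extended by linearity to both signs, gives
\[(\lambda,M_{\overline{z}})^{m}=(\pm1)^{m}|r|^{m/2}\frac{(\overline{z}-w)^{m}(\overline{z}-\overline{w})^{m}}{t^{m}}.\]
Substituting this into Corollary \ref{PhiLmm0} and invoking the identity $(-i)^{m}/\bigl(t^{m}(8\pi v^{2})^{m}\bigr)=1/(8\pi itv^{2})^{m}$ (which follows from $1/i=-i$) reduces each summand to exactly the shape asserted in Corollary \ref{b-=1lift}. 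Splitting the sum according to the two choices of sign absorbs the factors $(\pm1)^{m}$, producing the formal indexing set $S_{\beta,r}\cup(-1)^{m}S_{-\beta,r}$.

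The proof is essentially bookkeeping, and I do not expect a serious obstacle. The points requiring care are the normalization $Y^{2}=2v^{2}$ rather than $v^{2}$, the conversion $(-i)^{m}=i^{-m}$ that slides the factor of $i$ into the denominator, and the $(\pm1)^{m}$ accounting responsible for the notation $(-1)^{m}S_{-\beta,r}$.
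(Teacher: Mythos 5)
Your proposal is correct and follows exactly the route of the paper, which proves the corollary by direct substitution of Lemma \ref{b-=1neg} and part $(iv)$ of Lemma \ref{b-=1exp} into Corollary \ref{PhiLmm0}; you have merely written out the bookkeeping (the normalization $Y^{2}=2v^{2}$, the factor $(-i)^{m}$ absorbed into $(8\pi itv^{2})^{m}$, and the $(\pm1)^{m}$ sign accounting) that the paper leaves implicit.
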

Here and throughout, the union with $(-1)^{m}S_{-\beta,r}$ means that the terms arising from elements of $S_{-\beta,r}$ must be multiplied by $(-1)^{m}$.
\begin{proof}
This follows directly from Lemma \ref{b-=1neg} and part $(iv)$ of Lemma
\ref{b-=1exp}.
\end{proof}
Note that if $2\beta=0$ in $D_{L}$ then $S_{\beta,r}=S_{-\beta,r}$. In this case
we have, for even $m$, just twice the sum over $S_{\beta,r}$, while for odd $m$
the two sums cancel. This is in correspondence with the fact that the lifted
Poincar\'{e} series $F_{m,r,\beta}^{L}$ vanishes for odd $m$, as $e_{\beta}$ is
an eigenvector of $\rho_{L}(Z)$ but with the wrong eigenvalue.

\smallskip

It will be more convenient to analyze expressions involving the incomplete beta
function as in Corollary \ref{b-=1lift} in terms of the following
\begin{lem}
$(i)$ For $T>1$ we may write
\[B_{m}(T)=B\bigg(m+\frac{1}{2},-m;\frac{1}{T^{2}}\bigg)\quad\mathrm{as}
\quad\int_{T}^{\infty}\frac{2d\xi}{(\xi^{2}-1)^{m+1}}.\] $(ii)$ The function
$B_{m}(T)$ satisfies the estimate $B_{m}(T)=O\big(\frac{1}{T^{m+1}}\big)$ as
$T\to\infty$. \label{Bm1/T2}
\end{lem}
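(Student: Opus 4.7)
The plan for part $(i)$ is a direct change of variables in the defining integral of the incomplete beta function, namely
\[B\Big(m+\frac{1}{2},-m;\frac{1}{T^{2}}\Big)=\int_{0}^{1/T^{2}}\xi^{m-1/2}(1-\xi)^{-m-1}\,d\xi.\]
I would substitute $\xi=1/\eta^{2}$, under which the limits $\xi=0$ and $\xi=1/T^{2}$ correspond to $\eta=\infty$ and $\eta=T$ respectively, and $d\xi=-2\eta^{-3}\,d\eta$. A short computation shows that the three factors $\xi^{m-1/2}=\eta^{1-2m}$ and $(1-\xi)^{-m-1}=\eta^{2m+2}/(\eta^{2}-1)^{m+1}$, together with the Jacobian, combine to $-2/(\eta^{2}-1)^{m+1}\,d\eta$. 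Reversing the orientation absorbs the minus sign and yields exactly the claimed integral from $T$ to $\infty$.

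For part $(ii)$, I would simply exploit the representation furnished by part $(i)$. Once $T\geq\sqrt{2}$, the elementary inequality $\xi^{2}-1\geq\xi^{2}/2$ holds throughout the range of integration, so the integrand is pointwise dominated by $2^{m+2}/\xi^{2m+2}$. Integrating gives
\[B_{m}(T)\leq\frac{2^{m+2}}{(2m+1)\,T^{2m+1}},\]
which is $O(1/T^{2m+1})$, and since $2m+1\geq m+1$ for $m\geq 0$, this is in particular $O(1/T^{m+1})$ as $T\to\infty$ (so the stated estimate is comfortably loose).

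Both parts reduce to elementary calculus; there is no substantive obstacle. The only place requiring modest care is the bookkeeping of signs and orientation under the substitution $\xi=1/\eta^{2}$ in part $(i)$, after which part $(ii)$ is a one-line estimate on the tail of a rational integral.
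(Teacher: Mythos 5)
Your proof is correct and follows essentially the same route as the paper: part $(i)$ is the same computation packaged as a direct substitution rather than as differentiating both expressions in $T$ and matching their common limit $0$ at $T=\infty$, and part $(ii)$ replaces the paper's term-by-term integration of the binomial expansion of $(1-1/\xi^{2})^{-m-1}$ with a pointwise domination of that factor, both arguments in fact yielding the sharper bound $O(T^{-2m-1})$. There are no gaps.
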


\begin{proof}
Differentiating $B\big(m+\frac{1}{2},-m;\frac{1}{T^{2}}\big)$ gives the
derivative
\[\xi^{m-1/2}(1-\xi)^{1+m}\Big|_{\xi=1/T^{2}}\cdot\frac{-2}{T^{3}}
=\frac{-2}{(T^{2}-1)^{m+1}}\] of the asserted function. As both functions tend
to 0 as $T\to\infty$ (since the incomplete beta function vanishes at 0 by
definition), this proves part $(i)$. For part $(ii)$ we write the integrand
$\frac{2}{(\xi^{2}-1)^{m+1}}$ from part $(i)$ (with $\xi>1$) as
\[\frac{2/\xi^{2m+2}}{(1-1/\xi^{2})^{m+1}}=\frac{2}{\xi^{2m+2}}\sum_{h=0}^{
\infty}\binom{-m-1}{h}\frac{(-1)^{h}}{\xi^{2h}}=\sum_{h=0}^{\infty}\binom{m+h}{h
}\frac{2}{\xi^{2m+2+2h}}.\] Integration term by term now yields the desired
assertion. This proves the lemma.
\end{proof}

\smallskip

As the factor of automorphy on $G(L_{\mathbb{R}})\cong\mathcal{H}$ is
$j(M,z)^{2}$, an automorphic form of weight $m$ is a modular form of weight
$2m$. In addition, the weight $m$ Laplacian on
$G(L_{\mathbb{R}})\cong\mathcal{H}$ is just the usual weight $2m$ Laplacian on
$\mathcal{H}$. As in Theorem 3.10 of \cite{[Ze3]}, the fact that our theta lift
has eigenvalue $-2m$ means that its $\delta_{2m}$-image must be meromorphic. A
formula for this $\delta_{2m}$-image is now given in
\begin{thm}
In the case $b_{-}=1$ we have
\[\frac{1}{2\pi
i}\delta_{2m}\Phi_{m,r,\beta}^{L}(z)=\frac{2i|r|^{m/2}m!}{(-1)^{m}(4\pi)^{m+1}}
\sum_{w \in
S_{\beta,r}\cup(-1)^{m}S_{-\beta,r}}\frac{(2it)^{m+1}}{(z-w)^{m+1}(z-\overline{w
})^{m+1}}.\] \label{delta2mPhi}
\end{thm}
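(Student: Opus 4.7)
The plan is to differentiate the explicit formula of Corollary \ref{b-=1lift} term by term. Each summand factors as $C_w\cdot A(z)\cdot B_m(T(z,w))$, where $A(z)=(\overline{z}-w)^m(\overline{z}-\overline{w})^m/v^{2m}$, $T=\cosh d(z,w)$, and $C_w=|r|^{m/2}m!/(8\pi it)^m$ is independent of $z$. Since $\delta_{2m}=\partial_z+m/(iv)$ and $B_m(T)$ is a scalar function, splitting the weight as $2m+0$ yields the product rule
\[
\delta_{2m}\bigl[A\,B_m(T)\bigr]=(\delta_{2m}A)\,B_m(T)+A\,\partial_z B_m(T),
\]
reducing the theorem to (a) showing $\delta_{2m}A=0$ and (b) evaluating $A\,\partial_z B_m(T)$ in closed form.

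For (a), Lemma \ref{b-=1neg} identifies $A$, up to a $z$-independent constant, with $[(\lambda,M_{\overline{z}})/v^2]^m$ for $\lambda=-\sqrt{|r|}J_w$. Lemma \ref{b-=1exp}(ii) asserts $\delta_2[(\lambda,M_{\overline{z}})/v^2]=0$, and the general observation that $\delta_k f=0$ implies $\delta_{mk}(f^m)=0$ (immediate from $\delta_{mk}(f^m)=mf^{m-1}\partial_z f+(mk/(2iv))f^m$ combined with $\partial_z f=-(k/(2iv))f$) then gives $\delta_{2m}A=0$.

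For (b), Lemma \ref{Bm1/T2}(i) gives $B_m'(T)=-2/(T^2-1)^{m+1}$. I would compute $\partial_z T$ cleanly by writing $T=(\lambda,J_z)/(2\sqrt{|r|})$ via Lemma \ref{b-=1neg} and invoking $\delta_0(\lambda,J_z)=i(\lambda,M_{\overline{z}})/(2v^2)$ from Lemma \ref{b-=1exp}(ii), obtaining $\partial_z T=i(\overline{z}-w)(\overline{z}-\overline{w})/(4v^2 t)$. The factorisation $T^2-1=(z-w)(\overline{z}-\overline{w})(z-\overline{w})(\overline{z}-w)/(4v^2 t^2)$ coming from Equation \eqref{coshd} then drives the key cancellation: the $(\overline{z}-w)^{m+1}(\overline{z}-\overline{w})^{m+1}$ produced by $A\cdot\partial_z T$ is exactly matched by the corresponding factors in $(T^2-1)^{m+1}$, collapsing the summand to a universal constant times $t^{2m+1}/[(z-w)^{m+1}(z-\overline{w})^{m+1}]$.

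Reinstating $C_w$ and the external factor $1/(2\pi i)$ and reorganising the powers of $2$, $\pi$, $i$, and $t$ (using only $(8\pi i)^m=2^m i^m(4\pi)^m$ and $(2i)^{2m}=(-1)^m 2^{2m}$) produces the target constant $2i|r|^{m/2}m!/[(-1)^m(4\pi)^{m+1}]$ paired with $(2it)^{m+1}$. The $(-1)^m$ marker on the $S_{-\beta,r}$ summands carries through the computation unchanged, since $A$ and $\partial_z T$ both live entirely in the $z$--$w$ variables once the sign convention of Corollary \ref{b-=1lift} has been fixed. The main obstacle is nothing beyond careful bookkeeping of these numerical constants; termwise application of $\delta_{2m}$ is justified by the decay $B_m(T)=O(T^{-m-1})$ from Lemma \ref{Bm1/T2}(ii) together with the discreteness of $S_{\beta,r}\cup S_{-\beta,r}$ in $\mathcal{H}$.
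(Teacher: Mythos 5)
Your proposal is correct and follows essentially the same route as the paper's own proof: both apply the Leibniz rule for weight raising operators to the formula of Corollary \ref{b-=1lift}, use part $(ii)$ of Lemma \ref{b-=1exp} to kill the $\delta_{2m}$-image of the coefficient $(\lambda,M_{\overline{z}})^{m}/v^{2m}$, and then evaluate $\partial_{z}B_{m}(\cosh d(z,w))$ via Lemma \ref{Bm1/T2}$(i)$ together with the factorization of $\cosh^{2}d(z,w)-1$ from Equation \eqref{coshd}. The only addition beyond the paper's argument is your explicit justification of termwise differentiation, which the paper leaves implicit.
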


\begin{proof}
We apply $\delta_{2m}$ to the expression given in Corollary \ref{b-=1lift}.
Recall the Leibnitz rule $\delta_{k+l}(fg)=\delta_{k}f \cdot g+f\delta_{l}g$
for weight raising operators, and observe that the expression multiplying the
incomplete beta function is some constant times the $m$th power of
$\frac{(\lambda,M_{\overline{z}})}{v^{2}}$ for $\lambda=-\sqrt{|r|}J_{w}$ (Lemma
\ref{b-=1neg} again). Part $(ii)$ of Lemma \ref{b-=1exp} hence shows that it
suffices to let $\delta_{0}=\partial_{z}$ operate on the incomplete beta
function. Write the latter function as $B_{m}\big(\cosh d(z,w)\big)$, and recall
from Lemma \ref{b-=1neg} that the latter argument is
$\frac{(\lambda,J_{w})}{2\sqrt{|r|}}$. Part $(i)$ of Lemma \ref{Bm1/T2} and part
$(ii)$ of Lemma \ref{b-=1exp} now imply that $\partial_{z}B_{m}\big(\cosh
d(z,w)\big)$ equals
\[\frac{-2}{(\cosh^{2}d(z,w)-1)^{m+1}}\cdot\frac{i(\overline{z}-w)(\overline{z}
-\overline{w})}{4tv^{2}}=\frac{-i(2t)^{2m+1}v^{2m}(\overline{z}-w)(\overline{z}
-\overline{w})}{|z-w|^{2m+2}|z-\overline{w}|^{2m+2}},\] where in the latter
equality we decomposed $\cosh^{2}d(z,w)-1$ as the product of $\cosh d(z,w)-1$
and $\cosh d(z,w)+1$ and used Equation \eqref{coshd} for each of the
multipliers. Dividing by $2\pi i$, plugging in the remaining parts of the
expression from Corollary \ref{b-=1lift}, and canceling the powers of
$\overline{z}-w$,$\overline{z}-\overline{w}$, $t$, $v$, 2, and $i$ now yields
the desired expression. This proves the theorem.
\end{proof}

Note that the expression from Theorem \ref{delta2mPhi} yields precisely the
pole predicted by Theorem 3.10 of \cite{[Ze3]} in such a point $w$: The
parameter $\beta$ from that reference is chosen here to be 1, and the only
non-zero coefficients of the principal part of $f$ are
$c_{-\sqrt{|r|}J_{w},r}=1$ and $c_{\sqrt{|r|}J_{w},r}=(-1)^{m}$. We thus indeed
have the equality \[\sum_{\alpha J_{w} \in L^{*}}\alpha^{m}c_{\alpha J_{w},-\alpha^{2}}=(-\sqrt{|r|})^{m}+(-1)^{m}\sqrt{|r|}^{m}=2(-1)^{m}|r|^{\frac{m}{2}}.\]

\smallskip

The harmonic Poincar\'{e} series
$f=F_{-m,r,\beta}^{L}\big(\tau,\frac{1}{2}+\frac{b_{-}}{4}+\frac{m}{2}\big)$
(with any $b_{-}$), and its image under $\frac{1}{(2\pi i)^{m}}$, have real
Fourier coefficients. This can be seen either as in \cite{[Bru]}, or by
investigating the function $\overline{f(-\overline{\tau})}$ (which is modular
with the same representation $\rho_{L}$ by the properties of the latter
representation). Theorem 2.8 of \cite{[Ze4]} then shows that applying the $m$th
power of the weight lowering operator $L^{(b_{-})}$ defined in that reference to $\Phi_{m,r,\beta}^{L}$ yields
$m!\Gamma\big(m+\frac{b_{-}}{2}\big)(Y^{2})^{m}/\Gamma\big(\frac{b_{-}}{2}\big)$
times the complex conjugate of $\Phi_{m,r,\beta}^{L}$ (note that the latter
function already contains the coefficient $\frac{i^{m}}{2}$ considered in that
reference). Moreover, this function is also harmonic with respect to the
Laplacian defined in \cite{[Ze3]}. For $b_{-}=1$, where
$L^{(1)}=(2v^{2}\partial_{\overline{z}})^{2}$, this means that
\[(2v^{2}\partial_{\overline{z}})^{2m}\Phi_{m,r,\beta}^{L}=m!\frac{\Gamma\big(m+\frac{1}{2}\big)}{\Gamma\big(\frac{1}{2}\big)}
(2v^{2})^{m}\overline{\Phi_{m,r,\beta}^{L}}=\frac{(2m)!}{2^{m}}v^{2m}\overline{\Phi_{m,r,\beta}^{L}}.\] Hence the function $\frac{(4v^{2}\partial_{\overline{z}})^{2m}\Phi_{m,r,\beta}^{L}}{(2m)!}$, a
modular form of weight $-2m$ which is harmonic outside its singularities, also
equals $(2v^{2})^{m}\overline{\Phi_{m,r,\beta}^{L}}$. There are two operators
taking such functions to meromorphic modular forms. One is
$\delta_{-2m}^{2m+1}$, which is just the holomorphic operator
$\partial_{z}^{2m+1}$. The other one is the operator $\xi_{-2m}$ of \cite{[BF]}.
The images under these operators are are given in the following
\begin{prop}
The images of the function
$\frac{(4v^{2}\partial_{\overline{z}})^{2m}\Phi_{m,r,\beta}^{L}}{(2m)!}$ under
$\xi_{-2m}$ and under $\partial_{z}^{2m+1}$ are both multiples of $\frac{1}{2\pi
i}\delta_{2m}\Phi_{m,r,\beta}^{L}$, the coefficients being $2^{m+1}\pi i$ and
$2\pi i(2m)!$ respectively. \label{D2m+1xi}
\end{prop}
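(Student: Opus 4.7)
My plan is to handle the two assertions separately; the $\partial_{z}^{2m+1}$ assertion is the cleaner of the two and I would do it first.

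For $\partial_{z}^{2m+1}$: I invoke Bol's identity in the form of the operator equality
\[\partial_{z}^{2m+1}=\delta_{2m}\delta_{2m-2}\cdots\delta_{-2m+2}\delta_{-2m},\]
which follows from the factorization $\delta_{k}=v^{-k}\partial_{z}v^{k}$ by iterating and cancelling interior $v$-powers. Writing $g_{p}:=\frac{(4v^{2}\partial_{\overline{z}})^{p}\Phi_{m,r,\beta}^{L}}{p!}$, so that the function in the statement is $g_{2m}$, I then iterate Equation \eqref{dellapp}, namely $\delta_{2m-2p}g_{p}=(2m-p+1)g_{p-1}$, from $p=2m$ down to $p=1$; the $j$-th step ($j=1,\dots,2m$) contributes the factor $j$, yielding $(2m)!\,g_{0}=(2m)!\,\Phi_{m,r,\beta}^{L}$ after $2m$ steps, and a final application of $\delta_{2m}$ produces $(2m)!\,\delta_{2m}\Phi_{m,r,\beta}^{L}=2\pi i(2m)!\cdot\frac{1}{2\pi i}\delta_{2m}\Phi_{m,r,\beta}^{L}$, which is the claimed coefficient.

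For $\xi_{-2m}$: I would start by exploiting the harmonicity of $g_{2m}$ noted immediately before the statement. The equality $\Delta_{-2m}g_{2m}=4v^{2}\delta_{-2m}\partial_{\overline{z}}g_{2m}=0$ is equivalent to $\delta_{-2m}(\partial_{\overline{z}}g_{2m})=0$, and rewriting $\delta_{-2m}=v^{2m}\partial_{z}v^{-2m}$ this becomes $\partial_{z}(v^{-2m}\partial_{\overline{z}}g_{2m})=0$. Hence $v^{-2m}\partial_{\overline{z}}g_{2m}=A(\overline{z})$ is antiholomorphic in $z$, so
\[\xi_{-2m}g_{2m}=v^{-2m}\,\overline{\partial_{\overline{z}}g_{2m}}=\overline{A(\overline{z})}\]
is a holomorphic function of $z$ transforming as a weight $2m+2$ modular form. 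It remains to identify this holomorphic form with $2^{m}\delta_{2m}\Phi_{m,r,\beta}^{L}$, and I plan to do so by singularity matching at each $w\in S_{\beta,r}\cup(-1)^{m}S_{-\beta,r}$. Using the explicit formula for $\Phi_{m,r,\beta}^{L}$ from Corollary \ref{b-=1lift}, the asymptotic $B_{m}(T)\sim(m2^{m})^{-1}(T-1)^{-m}$ as $T\to 1^{+}$ obtained from Lemma \ref{Bm1/T2}, and Equation \eqref{coshd} for the local behavior of $\cosh d(z,w)$ near $z=w$, I can extract the leading $(z-w)^{-m-1}$-pole of $\xi_{-2m}g_{2m}$ and compare it with the corresponding pole of $\delta_{2m}\Phi_{m,r,\beta}^{L}$ read off from Theorem \ref{delta2mPhi}, where the summand for the given $w$ reduces near $z=w$ to $(2it)^{m+1}/[(z-w)^{m+1}(2it)^{m+1}]=(z-w)^{-m-1}$.

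I expect the main technical obstacle to be tracking the constant $2^{m+1}\pi i$ in the $\xi$-direction. Whereas the coefficient $2\pi i(2m)!$ in the $\partial_{z}^{2m+1}$-direction arises transparently from the telescoping product $1\cdot 2\cdots(2m)$ of the Bol iteration, the factor $2^{m+1}\pi i$ requires a more delicate accounting among the $(8\pi itv^{2})^{m}$ denominator in Corollary \ref{b-=1lift}, the $v^{-2m}$ prefactor in the definition of $\xi_{-2m}$, and the $\frac{2i|r|^{m/2}m!(2it)^{m+1}}{(-1)^{m}(4\pi)^{m+1}}$ constants appearing on the right-hand side of Theorem \ref{delta2mPhi}; verifying that these conspire precisely to $2^{m+1}\pi i$ (i.e.\ to $2^{m}$ after cancelling the $2\pi i$ in $\frac{1}{2\pi i}\delta_{2m}\Phi_{m,r,\beta}^{L}$) is the principal calculation.
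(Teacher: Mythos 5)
Your treatment of the $\partial_{z}^{2m+1}$-assertion is correct and is essentially the paper's argument: iterating Equation \eqref{dellapp} from $p=2m$ down to $p=1$ produces the telescoping factor $1\cdot2\cdots(2m)=(2m)!$, and a final application of $\delta_{2m}$ gives $2\pi i(2m)!\cdot\frac{1}{2\pi i}\delta_{2m}\Phi_{m,r,\beta}^{L}$, exactly as the paper obtains via the product of eigenvalues $\frac{1}{(2m)!}\prod_{p=1}^{2m}p(2m+1-p)=(2m)!$. One caveat: the factorization $\delta_{k}=v^{-k}\partial_{z}v^{k}$ does \emph{not} telescope the way you claim, since $\delta_{k+2}\delta_{k}=v^{-k-2}\partial_{z}v^{2}\partial_{z}v^{k}$ and the interior $v^{2}$ does not cancel; Bol's identity $\partial_{z}^{2m+1}=\delta_{2m}\delta_{2m-2}\cdots\delta_{-2m}$ is true but requires its own (standard) induction, which the paper also takes for granted in the sentence preceding the proposition.

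The $\xi_{-2m}$-assertion is where the proposal has a genuine gap. Your first step is fine: harmonicity of $g_{2m}$ away from its singularities does show that $\xi_{-2m}g_{2m}$ is meromorphic of weight $2m+2$. But singularity matching cannot finish the argument. Even if you matched the \emph{entire} principal part of $\xi_{-2m}g_{2m}$ at every point of $S_{\beta,r}\cup S_{-\beta,r}$ against that of $2^{m}\delta_{2m}\Phi_{m,r,\beta}^{L}$ --- and you only propose to match the leading $(z-w)^{-m-1}$ term, whereas each summand $(2it)^{m+1}/[(z-w)^{m+1}(z-\overline{w})^{m+1}]$ in Theorem \ref{delta2mPhi} contributes poles of every order from $1$ to $m+1$ at $w$ --- the difference of the two forms would still be a holomorphic cusp form of weight $2m+2$ for $\Gamma_{L}$, and that space is in general nonzero; nothing in your argument forces the difference to vanish. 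The deeper issue is that the proportionality of $\xi_{-2m}g_{2m}$ and $\partial_{z}^{2m+1}g_{2m}$ is not a formal consequence of harmonicity plus pole structure: for a generic harmonic form of weight $-2m$ these two images are independent (one records the non-holomorphic part, the other the holomorphic part). The paper supplies exactly the missing input, namely that the Fourier coefficients of the defining Poincar\'{e} series are real, whence by Theorem 2.8 of \cite{[Ze4]} one has $(v^{2}\partial_{\overline{z}})^{2m}\Phi_{m,r,\beta}^{L}=\frac{(2m)!}{2^{m}}v^{2m}\overline{\Phi_{m,r,\beta}^{L}}$, i.e.\ $g_{2m}=(2v^{2})^{m}\overline{\Phi_{m,r,\beta}^{L}}$; with this self-duality in hand, $\xi_{-2m}g_{2m}=2^{m}v^{-2m}\overline{\partial_{\overline{z}}v^{2m}\overline{\Phi_{m,r,\beta}^{L}}}=2^{m}\delta_{2m}\Phi_{m,r,\beta}^{L}$ is a one-line computation yielding the coefficient $2^{m+1}\pi i$. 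Without this identity (or an equivalent), neither the constant nor the proportionality itself is reachable by your route.
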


\begin{proof}
Using the second description of this function we get
\[\xi_{-2m}(2v^{2})^{m}\overline{\Phi_{m,r,\beta}^{L}}=2^{m}v^{-2m}\overline{
\partial_{\overline{z}}v^{2m}\overline{\Phi_{m,r,\beta}^{L}}}=2^{m}\delta_{2m}
\Phi_{m,r,\beta}^{L},\] which establishes the first assertion. For the second
assertion we recall that an application of $\delta_{k}$ to a function of the
form $(4v^{2}\partial_{z})G$ for $G$ a modular form of weight $k+2$ and
eigenvalue $\lambda$ gives just $\Delta_{k}G=-\lambda G$. We apply this $2m$
times, and divide by $(2m)!$. In total, applying $\delta_{-2m}^{2m}$ to
$\frac{(4v^{2}\partial_{\overline{z}})^{2m}\Phi_{m,r,\beta}^{L}}{(2m)!}$ gives
us $\Phi_{m,r,\beta}^{L}$ again, but multiplied by the constant
$\frac{1}{(2m)!}\prod_{p=1}^{2m}p(2m+1-p)=(2m)!$. This completes the proof of
the proposition.
\end{proof}

\section{Expansion of Modular Forms \label{Expatw}}

Given $\varepsilon>0$, we define $\mathcal{B}_{\varepsilon}$ to be the disc
$\big\{\zeta\in\mathbb{C}\big||\zeta|<\varepsilon\big\}$ of radius $\varepsilon$
around 0. In particular, $\mathcal{B}_{1}$ is the unit disc. Fourier expansions
of modular forms use the coordinate $q=\mathbf{e}(\tau)$ to map $\mathcal{H}$
onto $\mathcal{B}_{1}\setminus\{0\}$. One may consider them as ``Taylor expansions'' at the cusp $\infty$. We shall now introduce a useful coordinate for expanding a modular form at a point in $\mathcal{H}$.

Given $w=\sigma+it\in\mathcal{H}$, we consider the matrix
$A_{w}=\frac{1}{\sqrt{2it}}\binom{1\ \ -w}{1\ \ -\overline{w}} \in
SL_{2}(\mathbb{C})$ (where $\sqrt{i}=\frac{1+i}{\sqrt{2}}$). Its useful
properties are given in the following
\begin{lem}
$(i)$ The M\"{o}bius action of $A_{w}$ takes $z\in\mathcal{H}$ to
$\zeta=\frac{z-w}{z-\overline{w}}$, which lies in $\mathcal{B}_{1}$, and in
particular $A_{w}(w)=0$. $(ii)$ The derivative $A_{w}'(z)$ equals
$\frac{2it}{(z-\overline{w})^{2}}$. $(iii)$ The inverse matrix $A_{w}^{-1}$ is
$\frac{1}{\sqrt{2it}}\binom{-\overline{w}\ \ w}{-1\ \ 1}$, and it sends
$\zeta\in\mathcal{B}_{1}$ to
$z=\frac{w-\overline{w}\zeta}{1-\zeta}\in\mathcal{H}$. $(iv)$ Under the change
of variables $\zeta=A_{w}(z)$ the expressions $v=\Im z$, $z-\overline{w}$,
$z-w$, and $dz$ become
\[\frac{t(1-|\zeta|^{2})}{|1-\zeta|^{2}},\quad\frac{2it}{1-\zeta},\quad\frac{
2it\zeta}{1-\zeta},\quad\mathrm{and}\quad\frac{2it}{(1-\zeta)^{2}}d\zeta\]
respectively. \label{Awprop}
\end{lem}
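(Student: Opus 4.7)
The plan is to prove all four parts by direct computation, exploiting the fact that the $\sqrt{2it}$ normalizing factor drops out of every M\"{o}bius-type expression and that $\det A_{w}=\frac{w-\overline{w}}{2it}=1$.

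For part $(i)$, I would simply compute the M\"{o}bius action of the matrix $\binom{1\ \ -w}{1\ \ -\overline{w}}$ (the scalar $\frac{1}{\sqrt{2it}}$ cancels from numerator and denominator), obtaining $\zeta=\frac{z-w}{z-\overline{w}}$; specializing $z=w$ gives $0$ immediately. To see $|\zeta|<1$ on $\mathcal{H}$, I would expand
\[
|z-\overline{w}|^{2}-|z-w|^{2}=(w-\overline{w})(z-\overline{z})=2it\cdot 2iv=4tv>0,
\]
so $|z-\overline{w}|>|z-w|$. For part $(ii)$, the quotient rule applied to $\zeta=\frac{z-w}{z-\overline{w}}$ yields numerator $w-\overline{w}=2it$, which is exactly the claim.

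For part $(iii)$, I would note that $\det A_{w}=\frac{-\overline{w}+w}{2it}=1$, so the inverse is the cofactor matrix $\frac{1}{\sqrt{2it}}\binom{-\overline{w}\ \ w}{-1\ \ 1}$; applying its M\"{o}bius action to $\zeta$ gives $\frac{w-\overline{w}\zeta}{1-\zeta}$. For part $(iv)$, I would substitute $z=\frac{w-\overline{w}\zeta}{1-\zeta}$ directly: the identities
\[
z-\overline{w}=\frac{w-\overline{w}}{1-\zeta}=\frac{2it}{1-\zeta},\qquad z-w=\frac{(w-\overline{w})\zeta}{1-\zeta}=\frac{2it\zeta}{1-\zeta}
\]
are one-line manipulations, and differentiating gives $dz=\frac{2it}{(1-\zeta)^{2}}d\zeta$. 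For $v=\frac{z-\overline{z}}{2i}$, I would bring $z$ and $\overline{z}=\frac{\overline{w}-w\overline{\zeta}}{1-\overline{\zeta}}$ over the common denominator $|1-\zeta|^{2}$; the numerator simplifies (all cross terms of the form $w\overline{\zeta}$ and $\overline{w}\zeta$ cancel) to $(w-\overline{w})(1-|\zeta|^{2})$, giving $v=\frac{t(1-|\zeta|^{2})}{|1-\zeta|^{2}}$.

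There is no genuine obstacle here; the lemma is purely mechanical bookkeeping. The only mildly non-trivial step is the cancellation in the $v$-computation, where one must carefully track the four cross terms to see that the coefficient of $|\zeta|^{2}$ is $-(w-\overline{w})$ rather than something more complicated. Everything else follows from the definitions in one or two lines.
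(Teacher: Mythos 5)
Your proposal is correct and takes essentially the same route as the paper, whose entire proof is the remark that all assertions follow from direct and simple calculations; you have simply carried those calculations out, and each one checks. One cosmetic slip in part $(i)$: the correct factorization is $|z-\overline{w}|^{2}-|z-w|^{2}=(w-\overline{w})(\overline{z}-z)=(2it)(-2iv)=4tv$, whereas your displayed chain $(w-\overline{w})(z-\overline{z})=2it\cdot2iv=4tv$ contains two compensating sign errors (note $2it\cdot2iv=-4tv$), though the conclusion $4tv>0$ is exactly what is needed.
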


\begin{proof}
All of these assertions follow from direct and simple calculations.
\end{proof}

The relations between the expressions from Lemma \ref{Awprop} and the action of
$SL_{2}(\mathbb{R})$ on $\mathcal{H}$ are given in
\begin{lem}
Let $\gamma \in SL_{2}(\mathbb{R})$ and points $z$ and $w$ in $\mathcal{H}$ be
given, and let $j=j(\gamma,w)$. Then the following equalities hold: \[(i)\quad
A_{w}\gamma^{-1}=\binom{j/|j|\ \ \ \ 0\ \  }{\ \ 0\ \ \ \
\overline{j}/|j|}A_{\gamma w}.\qquad(ii)\quad A_{\gamma w}(\gamma
z)=\frac{j(\gamma,\overline{w})}{j(\gamma,w)}A_{w}(z).\] \label{Awgamma}
\end{lem}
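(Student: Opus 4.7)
The plan is to do part (i) as a direct matrix computation and then deduce part (ii) by applying both sides of (i) to the point $\gamma z$ via the M\"{o}bius action.

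For part (i), I would write $\gamma=\binom{a\ \ b}{c\ \ d}$ so that $\gamma^{-1}=\binom{d\ \ -b}{-c\ \ a}$ and $j=j(\gamma,w)=cw+d$. The key preliminary facts are $\Im(\gamma w)=t/|j|^{2}$, which gives $A_{\gamma w}=\frac{|j|}{\sqrt{2it}}\binom{1\ \ -\gamma w}{1\ \ -\overline{\gamma w}}$ (picking the right branch of the square root, which is harmless since $|j|>0$), and the crucial observation that since $\gamma$ has \emph{real} entries we have $\overline{\gamma w}=\gamma\overline{w}$, so that $a\overline{w}+b=j(\gamma,\overline{w})\cdot\gamma\overline{w}=\overline{j}\cdot\overline{\gamma w}$. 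Direct multiplication then gives
\[A_{w}\gamma^{-1}=\frac{1}{\sqrt{2it}}\binom{cw+d\ \ -(aw+b)}{c\overline{w}+d\ \ -(a\overline{w}+b)}=\frac{1}{\sqrt{2it}}\binom{j\ \ 0}{0\ \ \overline{j}}\binom{1\ \ -\gamma w}{1\ \ -\overline{\gamma w}},\]
and pulling out the scalar $|j|$ from the second factor recognizes the right-hand side as $\binom{j/|j|\ \ 0}{0\ \ \overline{j}/|j|}A_{\gamma w}$.

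For part (ii), I would apply both sides of the identity in (i) to the point $\gamma z$ using the M\"{o}bius action. On the left, $A_{w}\gamma^{-1}$ sends $\gamma z$ to $A_{w}(z)$. On the right, $A_{\gamma w}$ first sends $\gamma z$ to $A_{\gamma w}(\gamma z)$, and then the diagonal matrix acts as $\zeta\mapsto(j/\overline{j})\zeta$. Equating the two gives $A_{w}(z)=(j/\overline{j})A_{\gamma w}(\gamma z)$, and using $\overline{j}=c\overline{w}+d=j(\gamma,\overline{w})$ yields the stated formula.

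No step should be a real obstacle: part (i) is a one-line matrix product once one keeps track of the scalars $\sqrt{2it}$ and $|j|$, and part (ii) is a formal consequence. The only point deserving attention is the branch of $\sqrt{2it}$ on both sides; since we compare the identity only at the level of the M\"{o}bius action in part (ii), and since in part (i) both scalar factors are positive real after extraction, this ambiguity does not affect the equalities we need.
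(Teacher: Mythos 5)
Your proof is correct and matches the paper's approach: the paper simply states that both identities "can be easily verified directly," and your matrix computation for part $(i)$ (including the correct handling of $\Im(\gamma w)=t/|j|^{2}$ and the branch of $\sqrt{2it}$) together with the deduction of part $(ii)$ via the M\"{o}bius action is exactly that direct verification, carried out in full.
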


\begin{proof}
These can also be easily verified directly.
\end{proof}

Part $(ii)$ of Lemma \ref{Awgamma} immediately yields the following simple
\begin{cor}
The equality $|A_{\gamma w}(\gamma z)|=|A_{w}(z)|$ holds for any $\gamma \in
SL_{2}(\mathbb{R})$ and $z$ and $w$ from $\mathcal{H}$. \label{absAwzinv}
\end{cor}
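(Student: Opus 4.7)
The plan is to read off the result directly from part $(ii)$ of Lemma \ref{Awgamma}, which gives the identity
\[A_{\gamma w}(\gamma z)=\frac{j(\gamma,\overline{w})}{j(\gamma,w)}A_{w}(z).\]
So it suffices to verify that the scalar $j(\gamma,\overline{w})/j(\gamma,w)$ has absolute value $1$ whenever $\gamma \in SL_{2}(\mathbb{R})$.

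Writing $\gamma=\binom{a\ \ b}{c\ \ d}$ with real entries, we have $j(\gamma,w)=cw+d$ and $j(\gamma,\overline{w})=c\overline{w}+d$. Since $c$ and $d$ are real, the latter is exactly $\overline{cw+d}=\overline{j(\gamma,w)}$. Hence the ratio $j(\gamma,\overline{w})/j(\gamma,w)=\overline{j(\gamma,w)}/j(\gamma,w)$ is a quotient of a complex number by itself up to conjugation, so it lies on the unit circle. Taking absolute values in the displayed equation then yields the claim.

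There is essentially no obstacle here: the whole content is packaged into Lemma \ref{Awgamma}$(ii)$, and the only observation beyond it is the reality of the entries of $\gamma$, which forces $j(\gamma,\overline{w})$ and $j(\gamma,w)$ to be complex conjugates of one another. No estimate or continuation argument is needed, and the proof is a single line once the lemma is invoked.
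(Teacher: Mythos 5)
Your proof is correct and matches the paper's, which likewise derives the corollary immediately from part $(ii)$ of Lemma \ref{Awgamma}; the observation that $j(\gamma,\overline{w})=\overline{j(\gamma,w)}$ for real $\gamma$, so the multiplier is unimodular, is exactly the intended one-line argument.
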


Corollary \ref{absAwzinv} is useful for proving that certain regularized
integrals are well-defined---see Proposition \ref{pairwd} below.

\smallskip

When we wish to expand a meromorphic modular form of weight $2m+2$ with respect
to some group $\Gamma$ around a point $w\in\mathcal{H}$, we let $\zeta=A_{w}(z)$
and $z=A_{w}^{-1}(\zeta)$ and write
\begin{equation}
g(z)=g[A_{w}^{-1}]_{2m+2}(\zeta)j(A_{w}^{-1},\zeta)^{2m+2}=\frac{(1-\zeta)^{2m+2
}}{(2it)^{m+1}}\sum_{n\gg-\infty}a_{n}(w)\zeta^{n}, \label{expwzeta}
\end{equation}
or, in terms of $z$,
\begin{equation}
g(z)=g[A_{w}^{-1}]_{2m+2}(\zeta)j(A_{w},z)^{-2m-2}=\frac{(2it)^{m+1}}{
(z-\overline{w})^{2m+2}}\sum_{n\gg-\infty}a_{n}(w)A_{w}(z)^{n}. \label{expwz}
\end{equation}
For the coefficients $a_{n}(w)$ we can now prove, as in Lemma 8.1 of
\cite{[BK]}, the following
\begin{prop}
The function taking $w\in\mathcal{H}$ to $\frac{a_{n}(w)}{t^{m+1+n}}$ satisfies
the functional equations of a modular form of weight $2m+2+2n$ with respect to
$\Gamma$. \label{modcoeff}
\end{prop}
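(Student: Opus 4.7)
The plan is to derive the transformation law for $a_n(w)$ directly from the modularity of $g$, by expanding $g(z)$ at $w$ and $g(\gamma z)$ at $\gamma w$ via the coordinate of Equation \eqref{expwz}, then comparing. First I would fix $\gamma\in\Gamma$ and write two different expressions for $g(\gamma z)$: on the one hand, the weight $2m+2$ modularity gives $g(\gamma z)=j(\gamma,z)^{2m+2}g(z)$, into which I substitute the expansion of $g(z)$ at $w$ from Equation \eqref{expwz}; on the other hand, expanding $g$ directly at $\gamma w$ using the variable $A_{\gamma w}(\gamma z)$ gives
\[g(\gamma z)=\frac{(2i\Im\gamma w)^{m+1}}{(\gamma z-\overline{\gamma w})^{2m+2}}\sum_{n}a_{n}(\gamma w)A_{\gamma w}(\gamma z)^{n}.\]

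Next I would bring everything into the single variable $A_w(z)$. For this I need three standard identities for $\gamma\in SL_{2}(\mathbb{R})$: $j(\gamma,\overline{w})=\overline{j(\gamma,w)}$ so that $\Im\gamma w=t/\bigl(j(\gamma,w)j(\gamma,\overline{w})\bigr)$; the universal cocycle identity $\gamma z-\overline{\gamma w}=\gamma z-\gamma\overline{w}=(z-\overline{w})/\bigl(j(\gamma,z)j(\gamma,\overline{w})\bigr)$; and, crucially, part $(ii)$ of Lemma \ref{Awgamma} rewriting $A_{\gamma w}(\gamma z)=\bigl(j(\gamma,\overline{w})/j(\gamma,w)\bigr)A_{w}(z)$. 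Substituting these into the right-hand side above and equating with $j(\gamma,z)^{2m+2}$ times the expansion of $g(z)$ at $w$, the factors of $(z-\overline{w})^{2m+2}$, $(2it)^{m+1}$ and $j(\gamma,z)^{2m+2}$ all cancel, leaving an equality of two Laurent series in $A_{w}(z)$.

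Comparing coefficients of $A_{w}(z)^{n}$ yields
\[a_{n}(\gamma w)=\frac{j(\gamma,w)^{m+1+n}}{j(\gamma,\overline{w})^{m+1+n}}\,a_{n}(w).\]
Dividing by $(\Im\gamma w)^{m+1+n}=t^{m+1+n}/\bigl(j(\gamma,w)j(\gamma,\overline{w})\bigr)^{m+1+n}$, the factor $j(\gamma,\overline{w})^{m+1+n}$ cancels and one is left with $a_{n}(\gamma w)/(\Im\gamma w)^{m+1+n}=j(\gamma,w)^{2m+2+2n}a_{n}(w)/t^{m+1+n}$, which is exactly the functional equation of weight $2m+2+2n$ for $a_{n}(w)/t^{m+1+n}$.

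There is no serious obstacle here; this is essentially a bookkeeping argument, and the only nontrivial ingredient is the explicit transformation rule for $A_{w}(z)$ encoded in Lemma \ref{Awgamma}, which makes the two expansions compatible term-by-term. The mild care required is tracking that $j(\gamma,\overline{w})$ consistently appears as the complex conjugate of $j(\gamma,w)$, so that the $j(\gamma,\overline{w})$-factors conspire to cancel precisely against the power of $t$ introduced by normalising with $(\Im\gamma w)^{m+1+n}$.
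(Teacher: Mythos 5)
Your argument is correct and is essentially the paper's own proof: both expand $g(\gamma z)$ once via modularity plus the expansion at $w$ and once via the expansion at $\gamma w$, invoke part $(ii)$ of Lemma \ref{Awgamma} to identify $A_{\gamma w}(\gamma z)$ with $\frac{j(\gamma,\overline{w})}{j(\gamma,w)}A_{w}(z)$, and compare Laurent coefficients to get $a_{n}(\gamma w)=\frac{j(\gamma,w)^{m+1+n}}{j(\gamma,\overline{w})^{m+1+n}}a_{n}(w)$. The only cosmetic difference is that you handle the prefactors by the explicit cocycle identities for $\Im\gamma w$ and $\gamma z-\gamma\overline{w}$, where the paper instead cites part $(i)$ of Lemma \ref{Awgamma}; the computation is the same.
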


\begin{proof}
Fix $\gamma\in\Gamma$ and $z\in\mathcal{H}$, and consider the equality $g(\gamma
z)=j(\gamma,z)^{2m+2}g(z)$. We expand the right hand side around $w$ as in
Equation \eqref{expwz}, while for the left hand side we take the expansion
around $\gamma w$ as in the same equation. Using part $(i)$ of Lemma
\ref{Awgamma} for $j(A_{\gamma w},\gamma z)$ and part $(ii)$ of that Lemma for
$A_{\gamma w}(\gamma z)$ we obtain, after eliminating the common leading
coefficient, the equality \[\sum_{n}a_{n}(w)A_{w}(z)^{n}=\sum_{n}a_{n}(\gamma
w)A_{w}(z)^{n}\frac{j(\gamma,\overline{w})^{m+1+n}}{j(\gamma,w)^{m+1+n}}.\] As
the latter equality is an equality of Laurent series in $A_{w}(z)$, we can
compare the coefficients, from which the assertion now easily follows by the
modularity property of $w \mapsto t$. This proves the proposition.
\end{proof}

In spite of Proposition \ref{modcoeff}, we do not call the coefficients $a_{n}$
modular forms, since they are, in general, not continuous. For example, if $g$
has a pole of order $-n$ for some negative integer $n$, then $a_{n}$ attains a
non-zero value at the pole of $g$, but not around it. Proposition \ref{modcoeff}
also has the following
\begin{cor}
The function $w \mapsto a_{-m-1}(w)$ is well-defined on $Y_{\Gamma}=\Gamma\backslash\mathcal{H}$. \label{a-m-1}
\end{cor}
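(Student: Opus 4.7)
The plan is to read Corollary \ref{a-m-1} as the special case $n=-m-1$ of Proposition \ref{modcoeff}. With this value of $n$, the weight appearing in that Proposition becomes $2m+2+2n=0$, and the normalizing power of $t$ in the denominator is $t^{m+1+n}=t^{0}=1$. Hence $a_{-m-1}(w)$ itself (without any factor of $t$) satisfies the functional equations of a modular form of weight $0$.

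A weight $0$ functional equation with respect to $\Gamma$ is simply $\Gamma$-invariance, that is, $a_{-m-1}(\gamma w)=a_{-m-1}(w)$ for every $\gamma\in\Gamma$ and every $w\in\mathcal{H}$ at which the expansion makes sense. Therefore $a_{-m-1}$ descends to a well-defined function on the quotient $Y_{\Gamma}=\Gamma\backslash\mathcal{H}$, which is the claim.

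No step is a serious obstacle; the entire content of the corollary is the numerical observation that the weight in Proposition \ref{modcoeff} vanishes precisely at $n=-m-1$. The only mild subtlety worth a phrase in the write-up is the remark already made just after Proposition \ref{modcoeff}: $a_{-m-1}$ is not in general continuous (it takes special values at the poles of $g$), so ``well-defined on $Y_{\Gamma}$'' means as a function, not as a holomorphic or even continuous object. This does not affect the proof, since the functional equation is an identity of values at each point where both sides are defined.
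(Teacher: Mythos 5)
Your proof is correct and coincides with the first argument the paper itself gives: Corollary \ref{a-m-1} is the case $n=-m-1$ of Proposition \ref{modcoeff}, in which the weight $2m+2+2n$ vanishes and the normalizing power $t^{m+1+n}$ is trivial, so $a_{-m-1}$ is $\Gamma$-invariant and descends to $Y_{\Gamma}$. The paper also records a second, more conceptual interpretation of $a_{-m-1}(w)$ as the residue at $\Gamma w$ of the meromorphic differential $\bigl(\tfrac{i(J_{w},M_{z})}{2}\bigr)^{m}g(z)\,dz$, but your route is exactly its primary one.
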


\begin{proof}
One way to see this is as a special case of Proposition \ref{modcoeff}.
Alternatively, and more conceptually, the differential form
\[\bigg(\frac{i(J_{w},M_{z})}{2}\bigg)^{m}g(z)dz=\frac{(z-w)^{m}(z-\overline{w}
)^{m}}{(2it)^{m}}g(z)dz\] is a well-defined meromorphic differential on a
neighborhood of $\Gamma w \in Y_{\Gamma}$ (assuming that both $z$ and $w$ lie in the same pre-image of this neighborhood in $\mathcal{H}$). If we expand $g$ as in Equation \eqref{expwzeta} and apply part $(iv)$ of Lemma \ref{Awprop}, then we find that in terms of $\zeta=A_{w}(z)$ this differential form becomes just
$\sum_{n}a_{n}(w)\zeta^{m+n}d\zeta$. Hence $a_{-m-1}(w)$ is well-defined on
$Y_{\Gamma}$ as the residue of this differential form at $\Gamma w \in
Y_{\Gamma}$. This proves the corollary.
\end{proof}

\section{The Regularized Pairing of Bringmann--Kane \label{RegPair}}

\cite{[BK]} introduces a regularization which gives meaning to integrals pairing
modular forms with singularities in $\mathcal{H}$. This regularization makes use
of the coordinate $A_{w}(z)$ around singular points $w$. Explicitly, let two
modular forms $f$ and $g$ of the same weight $k$ with respect to a Fuchsian
group $\Gamma$ of the first kind be given. We allow both $f$ and $g$ to have
(isolated) singularities in $\mathcal{H}$. Fix a (nice enough) fundamental
domain $\mathcal{F}$ for $\Gamma$, and let $w_{j}$, $1 \leq j \leq l$ be the
singular points of $f$ and of $g$ in $\mathcal{F}$. We assume that each $w_{j}$
is an inner point of the union of the images of $\mathcal{F}$ under the
stabilizer $\Gamma_{w_{j}}$ of $w_{j}$ in $\Gamma$, and that
$f(z)\overline{g(z)}\big|A_{w_{j}}(z)\big|^{s_{j}}v^{k}$ is a continuous
function of $z$ in a neighborhood of $w_{j}$ for $s_{j}$ in some right
half-plane in $\mathbb{C}$. One then defines the (regularized) pairing $\langle
f,g \rangle^{reg}$ of $f$ and $g$ by considering the integral
\begin{equation}
\int_{\mathcal{F}}f(z)\overline{g(z)}\prod_{j=1}^{l}\big|A_{w_{j}}(z)\big|^{s_
{j}}v^{k}d\mu(z) \label{regdef}
\end{equation}
(where $d\mu(z)=\frac{dudv}{v^{2}}$ is the invariant measure on $\mathcal{H}$),
extending it to a meromorphic function of
$\mathbf{s}=\{s_{j}\}_{j=1}^{l}\in\mathbb{C}^{l}$ (if such a meromorphic
continuation exists), and taking the constant term of the resulting Laurent
expansion at $\mathbf{s}=0$. At this point we assume that $f\overline{g}$ decreases rapidly enough at the cusps of $\Gamma$ (if they exist), so that there are no convergence problems arising from them. The more general case, involving wilder behavior at the cusps, will be considered below. This pairing is well-defined by the following
\begin{prop}
The pairing of $f$ and $g$ is independent of the choice of the fundamental
domain $\mathcal{F}$. \label{pairwd}
\end{prop}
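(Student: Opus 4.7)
The plan is to exploit the $\Gamma$-invariance of the ``inner'' integrand $f(z)\overline{g(z)}v^{k}d\mu(z)$, which follows from the weight $k$ transformation laws of $f$ and $g$ together with $\Im(\gamma z)=v/|j(\gamma,z)|^{2}$, combined with Corollary~\ref{absAwzinv}, which tells us that the regularization factor $|A_{w}(z)|$ is $\Gamma$-equivariant in the sense that $|A_{w}(\gamma^{-1}z')|=|A_{\gamma w}(z')|$.

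Given two fundamental domains $\mathcal{F}_{1}$ and $\mathcal{F}_{2}$ with respective singular representatives $\{w_{j}^{(1)}\}$ and $\{w_{j}^{(2)}\}$ (where $w_{j}^{(2)}=\alpha_{j}w_{j}^{(1)}$ for suitable $\alpha_{j}\in\Gamma$), I would decompose $\mathcal{F}_{1}$, up to measure zero, into pieces $K_{i}$ for which $\gamma_{i}K_{i}\subseteq\mathcal{F}_{2}$ for some $\gamma_{i}\in\Gamma$. Substituting $z'=\gamma_{i}z$ on each $K_{i}$, summing over $i$, and invoking invariance and Corollary~\ref{absAwzinv} expresses $\int_{\mathcal{F}_{1}}$ of the regularized integrand as an integral over $\mathcal{F}_{2}$ whose only discrepancy with $\int_{\mathcal{F}_{2}}$ is that, on the piece $\gamma_{i}K_{i}$, the $j$-th regularization factor is $|A_{\gamma_{i}w_{j}^{(1)}}(z')|^{s_{j}}$ rather than $|A_{w_{j}^{(2)}}(z')|^{s_{j}}$.

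The heart of the argument is to show that this difference has a meromorphic continuation in $\mathbf{s}=(s_{1},\ldots,s_{l})$ to a neighborhood of $\mathbf{0}$ whose value there is zero. The hypothesis that each $w_{j}^{(2)}$ is an interior point of the union of its $\Gamma_{w_{j}^{(2)}}$-translates of $\mathcal{F}_{2}$ implies that in a sufficiently small neighborhood $U_{j}$ of $w_{j}^{(2)}$ the only $\Gamma$-translate of $w_{j}^{(1)}$ appearing is $w_{j}^{(2)}$ itself; hence for every piece $\gamma_{i}K_{i}$ whose closure meets $U_{j}$ one has $\gamma_{i}w_{j}^{(1)}=w_{j}^{(2)}$, so the two regularization factor assignments coincide on a neighborhood of each singularity. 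Away from the singularities, the relevant difference $\sum_{i}\chi_{\gamma_{i}K_{i}}(z)\prod_{j}|A_{\gamma_{i}w_{j}^{(1)}}(z)|^{s_{j}}-\prod_{j}|A_{w_{j}^{(2)}}(z)|^{s_{j}}$ is continuous in $(z,\mathbf{s})$ and vanishes identically at $\mathbf{s}=\mathbf{0}$.

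The main obstacle is the analytic control near the cusps (if any). One uses the modular growth conditions on $f$ and $g$, together with $|A_{w}(z)|\to1$ at every cusp, to obtain an integrable majorant for the difference integrand on a neighborhood of $\mathbf{s}=\mathbf{0}$. With such control in place, dominated convergence (or a direct estimate) shows that the difference integral is holomorphic at $\mathbf{s}=\mathbf{0}$ and vanishes there; in particular its constant term in the Laurent expansion is zero, so the two regularized pairings agree, proving independence of $\mathcal{F}$.
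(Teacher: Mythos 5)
Your argument is correct and follows essentially the same route as the paper's (much terser) proof: the cut-and-paste change-of-variables argument familiar from the independence of the Petersson inner product of the choice of fundamental domain, combined with Corollary \ref{absAwzinv} to handle the relocation of the singular points, and the observation that the resulting discrepancy in the regularization factors is holomorphic in $\mathbf{s}$ and vanishes at $\mathbf{s}=0$. The paper compresses all of this into two sentences, so your write-up is a fleshed-out version of the intended proof rather than a different one.
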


\begin{proof}
If we change $\mathcal{F}$ in a manner which leaves all the singular points
invariant (i.e., does not take any of the them to a non-trivial image under
$\Gamma$) then this follows as for the independence of the Petersson inner
product of the fundamental domain (since we take the value at
$\mathbf{s}=0$). Corollary \ref{absAwzinv} allows us to move the singular points
as well, which completes the proof of the proposition.
\end{proof}

\smallskip

We now present a tool which will make our evaluation of this pairing much
simpler. For any $w\in\mathcal{H}$ and $\varepsilon>0$ define
\[\mathcal{D}_{\varepsilon,w}=\big\{z\in\mathcal{H}\big||A_{w}
(z)|<\varepsilon\big\}=A_{w}^{-1}(\mathcal{B}_{\varepsilon}).\] The fact that
$\mathcal{F}$ contains only finitely many singular points implies that for small
enough $\varepsilon$ the sets $\mathcal{D}_{\varepsilon,w_{j}}$, $1 \leq j \leq
l$ are pairwise disjoint. Note that our assumption on the relation between the
$w_{j}$s and $\mathcal{F}$ implies that
\begin{equation}
\mathcal{D}_{\varepsilon,w_{j}}=\bigcup_{\gamma\in\Gamma_{w_{j}}}
\gamma\big(\mathcal{D}_{\varepsilon,w_{j}}\cap\mathcal{F}\big) \label{Dwjunion}
\end{equation}
if $\varepsilon$ is small enough, so that the image of
$\mathcal{F}\cap\mathcal{D}_{\varepsilon,w_{j}}$ in $Y_{\Gamma}$ is a full
neighborhood (which we denote by $\mathcal{D}_{\varepsilon,\Gamma w_{j}}$) of the
image $\Gamma w_{j}$ of $w_{j}$ in $Y_{\Gamma}$ (as well as in its compactification $X_{\Gamma}$ obtained by adding the cusps to $Y_{\Gamma}$). It is clear that the set $\mathcal{F}_{\varepsilon}=\mathcal{F}\setminus\bigcup_{j=1}^{l}\mathcal{D}_{
\varepsilon,w_{j}}$ projects, up to the finitely many cusps, onto the complement $X_{\Gamma,\varepsilon}$ of $\bigcup_{j=1}^{l}\mathcal{D}_{\varepsilon,\Gamma w_{j}}$ in $X_{\Gamma}$. We now establish the following
\begin{lem}
The pairing $\langle f,g \rangle^{reg}$ decomposes as
\[\int_{X_{\Gamma,\varepsilon}}f(z)\overline{g(z)}v^{k}d\mu(z)+\sum_{j=1}^{l}
\mathrm{CT}_{s=0}\int_{\mathcal{D}_{\varepsilon,\Gamma
w_{j}}}f(z)\overline{g(z)}\big|A_{w_{j}}(z)\big|^{s}v^{k}d\mu(z),\] where
$\mathrm{CT}_{s=0}$ means the constant term of the meromorphic continuation in
$s$ at $s=0$. \label{singsep}
\end{lem}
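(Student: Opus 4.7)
The plan is to split the integral in Equation \eqref{regdef} according to the decomposition $\mathcal{F}=\mathcal{F}_{\varepsilon}\sqcup\bigsqcup_{j=1}^{l}\bigl(\mathcal{F}\cap\mathcal{D}_{\varepsilon,w_{j}}\bigr)$, which is disjoint for $\varepsilon$ small by the pairwise disjointness of the discs $\mathcal{D}_{\varepsilon,w_{j}}$. By linearity the integrand breaks into $l+1$ pieces, each a function of $\mathbf{s}$; once I show that each piece individually admits a meromorphic continuation near $\mathbf{s}=0$ in which the contributions from the different $s_{j}$'s decouple, the constant term operation $\mathrm{CT}_{\mathbf{s}=0}$ may be applied separately to each piece and the sum of those constant terms yields $\langle f,g\rangle^{reg}$.

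First I would handle the piece $\mathcal{F}_{\varepsilon}$. On this set every $|A_{w_{j}}(z)|$ is bounded below by $\varepsilon$, and as $z$ tends to any cusp of $\Gamma$ the value $A_{w_{j}}(z)$ approaches a point on the unit circle, so each $|A_{w_{j}}(z)|^{s_{j}}$ is a bounded continuous function of $(z,\mathbf{s})$ on $\mathcal{F}_{\varepsilon}$ times a neighborhood of $\mathbf{s}=0$. Since $f\overline{g}v^{k}$ is integrable on $\mathcal{F}_{\varepsilon}$ (by the same cusp growth hypotheses that made the regularized pairing well defined in the first place), dominated convergence shows that this integral is entire in $\mathbf{s}$, so its contribution to the constant term is simply its value at $\mathbf{s}=0$. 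Identifying $\mathcal{F}_{\varepsilon}$ with its image $X_{\Gamma,\varepsilon}$ in $X_{\Gamma}$ produces the first term in the claimed decomposition.

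For each piece $\mathcal{F}\cap\mathcal{D}_{\varepsilon,w_{j}}$, the key observation is that, among the factors $|A_{w_{i}}(z)|^{s_{i}}$, only $|A_{w_{j}}(z)|^{s_{j}}$ can degenerate: for $i\neq j$ the function $|A_{w_{i}}(z)|$ is bounded and bounded away from $0$ on $\mathcal{D}_{\varepsilon,w_{j}}$, hence $\prod_{i\neq j}|A_{w_{i}}(z)|^{s_{i}}$ is jointly continuous and holomorphic in the $s_{i}$'s with $i\neq j$, equal to $1$ when these variables vanish. Thus the integral on this piece is holomorphic in the variables $s_{i}$ with $i\neq j$ in a neighborhood of $0$, and its entire meromorphic behavior as $\mathbf{s}\to 0$ is carried by $s_{j}$ alone. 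Consequently the constant term at $\mathbf{s}=0$ may be computed by first setting $s_{i}=0$ for $i\neq j$ (which collapses the product to the single factor $|A_{w_{j}}(z)|^{s_{j}}$) and then taking $\mathrm{CT}_{s_{j}=0}$. Invoking the covering relation \eqref{Dwjunion}, the integral over $\mathcal{F}\cap\mathcal{D}_{\varepsilon,w_{j}}$ is precisely the integral of the $\Gamma$-invariant integrand over the neighborhood $\mathcal{D}_{\varepsilon,\Gamma w_{j}}$ in $X_{\Gamma}$, yielding the $j$-th summand of the asserted formula.

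The only real obstacle is justifying that the multi-variable constant term decouples in the manner described; everything else is bookkeeping. This decoupling reduces to the elementary fact that if $h(s_{1},\ldots,s_{l})$ is meromorphic near $\mathbf{0}$ and holomorphic in all variables other than $s_{j}$, then $\mathrm{CT}_{\mathbf{s}=0}h=\mathrm{CT}_{s_{j}=0}h(0,\ldots,0,s_{j},0,\ldots,0)$, and the disjointness of the discs $\mathcal{D}_{\varepsilon,w_{j}}$ is precisely what guarantees this local-to-singular-point holomorphy on each piece of the decomposition.
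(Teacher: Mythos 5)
Your proposal is correct and follows essentially the same route as the paper's own proof: decompose $\mathcal{F}$ into $\mathcal{F}_{\varepsilon}$ and the pieces $\mathcal{F}\cap\mathcal{D}_{\varepsilon,w_{j}}$, observe that the integral over $\mathcal{F}_{\varepsilon}$ is entire in $\mathbf{s}$ while each disc contribution is holomorphic in the variables $s_{i}$ with $i\neq j$ (so the constant terms decouple), and identify $\mathcal{F}\cap\mathcal{D}_{\varepsilon,w_{j}}$ with $\mathcal{D}_{\varepsilon,\Gamma w_{j}}$ via Equation \eqref{Dwjunion} and the invariance from Corollary \ref{absAwzinv}. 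Your explicit statement of the decoupling fact for the multi-variable constant term is a useful elaboration of what the paper leaves implicit, but it is the same argument.
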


\begin{proof}
We decompose the integral over $\mathcal{F}$ appearing in Equation
\eqref{regdef} as the sum of the integral over $\mathcal{F}_{\varepsilon}$ and
the integrals over $\mathcal{F}\cap\mathcal{D}_{\varepsilon,w_{j}}$. Now,
the integral over $\mathcal{F}_{\varepsilon}$ yields an entire function of
$\mathbf{s}\in\mathbb{C}^{l}$ (in which substituting $\mathbf{s}=0$ yields the
first term), and the integral over
$\mathcal{F}\cap\mathcal{D}_{\varepsilon,w_{j}}$ is entire, for every value of
$s_{j}$, in the other coordinates of $\mathbf{s}$. We make the index change
$s=s_{j}$, and identify the integral over $\mathcal{F}_{\varepsilon}$ with the one over $X_{\Gamma,\varepsilon}$ (trivial). Moreover, the argument which used
Corollary \ref{absAwzinv} to prove Proposition \ref{pairwd} shows that the
integral over $\mathcal{F}\cap\mathcal{D}_{\varepsilon,w_{j}}$ coincides with
that over $\mathcal{D}_{\varepsilon,\Gamma w_{j}}$. This proves the lemma.
\end{proof}

\smallskip

We will be interested in the case where $k=2m+2$, $\Gamma=\Gamma_{L}$ of an even
lattice $L$ of signature $(2,1)$, $g$ is meromorphic, and $f$ is the function
$\frac{1}{2\pi i}\delta_{2m}\Phi_{m,r,\beta}^{L}$ from Theorem \ref{delta2mPhi}.
Let $w_{j}=\sigma_{j}+it_{j}$, $1 \leq j \leq l$ be the poles of $\frac{1}{2\pi
i}\delta_{2m}\Phi_{m,r,\beta}^{L}$ and of $g$ which lie in $\mathcal{F}$. We now
simplify the expression for the pairing in question as in the following
\begin{prop}
The pairing of $\big\langle\frac{1}{2\pi
i}\delta_{2m}\Phi_{m,r,\beta}^{L},g\big\rangle^{reg}$ can be written as
\[-\sum_{j=1}^{l}\mathrm{CT}_{s=0}\frac{s}{8\pi}\int_{\mathcal{D}_{\varepsilon,
\Gamma_{L}w_{j}}}\Phi_{m,r,\beta}^{L}(z)\overline{g(z)}\big|A_{w_{j}}(z)\big|^{s
}\frac{2it_{j}}{(z-w_{j})(z-\overline{w_{j}})}v^{2m}dzd\overline{z}.\]
\label{afterStokes}
\end{prop}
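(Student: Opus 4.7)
The plan is to rewrite the integrand as a total derivative (modulo an error coming from the regularizing factor) and then apply Stokes' theorem; the boundary contributions will cancel between the two pieces produced by Lemma \ref{singsep}. Writing $f=\frac{1}{2\pi i}\delta_{2m}\Phi_{m,r,\beta}^{L}$, Lemma \ref{singsep} presents $\langle f,g\rangle^{reg}$ as $\int_{X_{\Gamma_{L},\varepsilon}}f\overline{g}v^{2m+2}d\mu$ plus $\sum_{j}\mathrm{CT}_{s=0}\int_{\mathcal{D}_{\varepsilon,\Gamma_{L}w_{j}}}f\overline{g}|A_{w_{j}}|^{s}v^{2m+2}d\mu$, and these two pieces will be handled in parallel.

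The key computational input is a Leibnitz identity on each disc. Combining $\delta_{2m}\Phi=\partial_{z}\Phi-\frac{im}{v}\Phi$, $\partial_{z}v^{2m}=-imv^{2m-1}$, $\partial_{z}\overline{g}=0$ wherever $g$ is holomorphic, and the computation
\[\partial_{z}|A_{w_{j}}|^{s}=\tfrac{s}{2}|A_{w_{j}}|^{s}\partial_{z}\log A_{w_{j}}=\frac{ist_{j}|A_{w_{j}}|^{s}}{(z-w_{j})(z-\overline{w_{j}})}\]
coming from Lemma \ref{Awprop}$(ii)$, one gets
\[\delta_{2m}\Phi_{m,r,\beta}^{L}\cdot\overline{g}v^{2m}|A_{w_{j}}|^{s}=\partial_{z}\bigl(\Phi_{m,r,\beta}^{L}\overline{g}v^{2m}|A_{w_{j}}|^{s}\bigr)-\frac{ist_{j}\,\Phi_{m,r,\beta}^{L}\overline{g}v^{2m}|A_{w_{j}}|^{s}}{(z-w_{j})(z-\overline{w_{j}})}.\]
Dividing by $2\pi i$, using $v^{2m+2}d\mu=\frac{i}{2}v^{2m}dz\wedge d\overline{z}$ and $\partial_{z}H\cdot dz\wedge d\overline{z}=d(H\,d\overline{z})$, and regrouping $\frac{ist_{j}}{4\pi}=\frac{s}{8\pi}\cdot 2it_{j}$, this recasts $f\overline{g}v^{2m+2}|A_{w_{j}}|^{s}d\mu$ as
\[\tfrac{1}{4\pi}d\bigl(\Phi_{m,r,\beta}^{L}\overline{g}v^{2m}|A_{w_{j}}|^{s}d\overline{z}\bigr)-\tfrac{s}{8\pi}\Phi_{m,r,\beta}^{L}\overline{g}|A_{w_{j}}|^{s}\tfrac{2it_{j}v^{2m}}{(z-w_{j})(z-\overline{w_{j}})}dz\wedge d\overline{z},\]
which is already the shape of the answer. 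Setting $s=0$ gives the corresponding identity on $X_{\Gamma_{L},\varepsilon}$ without the second term, as $\Phi_{m,r,\beta}^{L}$ and $g$ are smooth there.

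Stokes' theorem now handles both pieces. On $X_{\Gamma_{L},\varepsilon}$, whose induced boundary orientation is opposite to that of the removed discs, one obtains $\int_{X_{\Gamma_{L},\varepsilon}}f\overline{g}v^{2m+2}d\mu=-\sum_{j}\tfrac{1}{4\pi}\int_{\partial\mathcal{D}_{\varepsilon,\Gamma_{L}w_{j}}}\Phi_{m,r,\beta}^{L}\overline{g}v^{2m}d\overline{z}$. For each disc integral I excise a concentric disc $\mathcal{D}_{\delta,\Gamma_{L}w_{j}}$, apply Stokes on the annulus, and bound the inner-circle contribution: in the coordinate $\zeta=A_{w_{j}}(z)$ the integrand on $|\zeta|=\delta$ is $O(\delta^{s+1-N})$, where $N$ is the combined pole order of $\Phi_{m,r,\beta}^{L}$ and $g$ at $w_{j}$, so for $\Re s$ large it tends to $0$ as $\delta\to0$. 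Only the outer-boundary contribution $\tfrac{\varepsilon^{s}}{4\pi}\int_{\partial\mathcal{D}_{\varepsilon,\Gamma_{L}w_{j}}}\Phi_{m,r,\beta}^{L}\overline{g}v^{2m}d\overline{z}$ survives, alongside the error term; meromorphic continuation in $s$ extends this identity to a neighborhood of $s=0$.

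Taking $\mathrm{CT}_{s=0}$ and summing over $j$, the outer-boundary contributions (entire in $s$ via $\varepsilon^{s}$, with value $1$ at $s=0$) precisely cancel the $-\sum_{j}\tfrac{1}{4\pi}\int_{\partial\mathcal{D}}$-terms coming from the $X_{\Gamma_{L},\varepsilon}$ piece, and only the sum $-\sum_{j}\mathrm{CT}_{s=0}\tfrac{s}{8\pi}\int_{\mathcal{D}}\ldots$ survives, which is exactly the claim. The main technical subtlety is justifying Stokes on the punctured discs; the regularizer $|A_{w_{j}}|^{s}$ is precisely what is needed to kill the inner-boundary contribution before one meromorphically continues back to $s=0$.
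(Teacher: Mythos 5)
Your proof is correct and follows the same overall strategy as the paper: decompose via Lemma \ref{singsep}, exhibit the integrand as an exact form plus the correction coming from differentiating the regularizer $|A_{w_{j}}(z)|^{s}$, apply Stokes, cancel the boundary terms between $X_{\Gamma_{L},\varepsilon}$ and the discs, and compute $\partial_{z}|A_{w_{j}}(z)|^{s}$ exactly as in the paper. The one genuine difference is how the key exactness identity is obtained. The paper derives $\frac{1}{2\pi i}\delta_{2m}\Phi_{m,r,\beta}^{L}\,\overline{g}\,v^{2m}dzd\overline{z}=\frac{1}{4\pi}d\big(\Phi_{m,r,\beta}^{L}\overline{g}\,v^{2m}d\overline{z}\big)$ by pairing the vector-valued form $\widetilde{\Phi}_{m,r,\beta}^{L}d\overline{z}$ of Proposition \ref{V2mpreim} against $g\binom{z}{1}^{2m}$ and using that the higher components die against a holomorphic $g$; this invokes the eigenvalue property of $\Phi_{m,r,\beta}^{L}$ through Proposition \ref{V2mpreim}. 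You instead verify the same identity by the elementary Leibnitz computation $\partial_{z}\big(\Phi\,\overline{g}\,v^{2m}\big)=\big(\delta_{2m}\Phi\big)\overline{g}\,v^{2m}$, which uses only $\partial_{z}v^{2m}=-imv^{2m-1}$ and the anti-holomorphy of $\overline{g}$ and requires no special property of $\Phi$. For this proposition your route is leaner; the paper's route buys the vector-valued framework that it reuses later (Sections \ref{PolEval} and \ref{RelGreen}). You are also more careful than the paper on one point it glosses over: justifying Stokes on the punctured discs by excising a concentric disc of radius $\delta$ and bounding the inner-circle contribution by $O(\delta^{\Re s+1-N})$ for $\Re s$ large, then continuing meromorphically. (Minor caveat: $\Phi_{m,r,\beta}^{L}$ has logarithmic-type rather than purely polar singularities at the $w_{j}$, so ``combined pole order'' should be read as a polynomial growth bound, possibly times a logarithm; this does not affect the estimate.)
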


\begin{proof}
We write $v^{2m+2}d\mu(z)$ as $\frac{i}{2}v^{2m}dzd\overline{z}$, and observe
that the non-regularized part $\frac{1}{2\pi
i}\delta_{2m}\Phi_{m,r,\beta}^{L}(z)\overline{g(z)}v^{2m}dzd\overline{z}$ of the
$(1,1)$-form that we are integrating in Lemma \ref{singsep} is exact. Indeed,
$v^{2m}\delta_{2m}\Phi_{m,r,\beta}^{L}$ is
$\partial_{z}\big(v^{2m}\Phi_{m,r,\beta}^{L}\big)$, and we can put
$\overline{g(z)}$ inside the derivative since $g$ is meromorphic. Pulling in
$d\overline{z}$, this differential form is
$\frac{1}{4\pi}d\big(\Phi_{m,r,\beta}^{L}(z)\overline{g(z)}v^
{2m}d\overline{z}\big)$. Hence this is the integrand in the first term in
Lemma \ref{singsep}, while the integrand appearing in the $j$th summand in the
second term is the same expression but multiplied by
$\big|A_{w_{j}}(z)\big|^{s}$. We decompose the latter product according to the
rule
\begin{equation}
d\big(H(z)d\overline{z}\big)\big|A_{w_{j}}(z)\big|^{s}=d\big(H(z)\big|A_{w_{j}
}(z)\big|^{s}d\overline{z}\big)-H(z)\partial_{z}\big|A_{w_{j}}(z)\big|^{s}
dzd\overline{z}, \label{ddiff}
\end{equation}
and now apply Stokes' Theorem for the integrals involving
exact differential forms. The first term from Lemma \ref{singsep} thus yields
\[\frac{1}{4\pi}\int_{\partial
X_{\Gamma,\varepsilon}}\Phi_{m,r,\beta}^{L}(z)\overline{g(z)}v^
{2m}d\overline{z},\] while Equation \eqref{ddiff} shows that the integral in the
$j$th summand becomes $\frac{1}{4\pi}$ times
\[\int_{\partial\mathcal{D}_{\varepsilon,\Gamma_{L}w_{j}}}\!\!\!\!\Phi_{m,r,
\beta}^{L}(z)\overline{g(z)}\big|A_{w_{j}}(z)\big|^{s}\!v^{2m}\!d\overline{z}
-\!\int_{\mathcal{D}_{\varepsilon,\Gamma_{L}w_{j}}}\!\!\!\!\Phi_{m,r,\beta}^{L}
(z)\overline{g(z)}\partial_{z}\big|A_{w_{j}}(z)\big|^{s}\!v^{2m}\!dzd\overline{z
}.\] For the constant term at $s=0$ we may just substitute $s=0$ at the integral
over $\partial\mathcal{D}_{\varepsilon,\Gamma_{L}w_{j}}$. This yields the same
integrand as in the integral over $\partial X_{\Gamma,\varepsilon}$, where the
latter boundary is the sum of the former boundaries but with the opposite
orientation. Hence all these terms cancel. Applying part $(ii)$ of Lemma
\ref{Awprop} for the derivative of
$\big|A_{w_{j}}(z)\big|^{s}=\big(A_{w_{j}}(z)\overline{A_{w_{j}}(z)}\big)^{\frac{s}{2}}$
now yields
\[\frac{s}{2}\big|A_{w_{j}}(z)\big|^{s-2}\overline{A_{w_{j}}(z)}\frac{2it_{j}}{
(z-\overline{w_{j}})^{2}},\quad\mathrm{which\
equals}\quad\frac{s}{2}\big|A_{w_{j}}(z)\big|^{s}\frac{2it_{j}}{(z-w_{j}
)(z-\overline{w_{j}})}\] by the definition of $A_{w_{j}}(z)$. This completes the
proof of the proposition.
\end{proof}

\section{Unfolding Again \label{UnfPhi}}

The sets $S_{\beta,r}$ and $S_{-\beta,r}$ appearing in the expression for
$\Phi_{m,r,\beta}^{L}(z)$ given in Corollary \ref{b-=1lift} consist of finitely
many orbits of $\Gamma_{L}$. For simplicity of the following presentation, we
shall assume, for the moment, that $S_{-\beta,r}$ is empty and $S_{\beta,r}$
consists of a single orbit $\Gamma_{L}w_{0}$ of $\Gamma_{L}$ (the general case
will be obtained via a simple summation). It thus makes sense to choose a
representative ($w_{0}$, say), and unfold the integral from Proposition
\ref{afterStokes}. This is also done in \cite{[BK]}, where expressions based on
the function $f_{D,k,[Q]}$ are given in terms of a representing quadratic form
$Q$ of the class $[Q]$. The result here becomes
\begin{prop}
The pairing $\big\langle g,\frac{1}{2\pi i}\delta_{2m}\Phi_{m,r,\beta}^{L}
\big\rangle^{reg}$ equals the sum over all the points
$\tilde{w}=\tilde{\sigma}+i\tilde{t}\in\Gamma_{L}\{w_{j}\}_{j=0}^{l}$ of the
constant term at $s=0$ of $\frac{-2|r|^{m/2}m!s}{(-8\pi
i)^{m+1}|\Gamma_{L,w_{0}}|}$ times
\[\int_{\mathcal{D}_{\varepsilon,\tilde{w}}}\frac{(z-w_{0})^{m}(z-\overline{w_{0
}})^{m}\tilde{t}}{t_{0}^{m}(\overline{z}-\tilde{w})(\overline{z}-\overline{
\tilde{w}})}B_{m}\big(\cosh
d(z,w_{0})\big)g(z)\big|A_{\tilde{w}}(z)\big|^{s}dzd\overline{z}.\]
\label{unfw0}
\end{prop}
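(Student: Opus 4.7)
The plan is to substitute the explicit expression for $\Phi^{L}_{m,r,\beta}$ from Corollary \ref{b-=1lift} into the formula of Proposition \ref{afterStokes} and then apply the classical unfolding trick to the orbit sum over $S_{\beta,r}=\Gamma_{L}w_{0}$ that appears inside $\Phi^{L}_{m,r,\beta}$. Since the present statement concerns $\langle g,\frac{1}{2\pi i}\delta_{2m}\Phi^{L}_{m,r,\beta}\rangle^{reg}$, whereas Proposition \ref{afterStokes} is phrased with the opposite ordering, I first complex-conjugate that identity. Because $B_{m}$ is real-valued (Lemma \ref{Bm1/T2}), $\overline{\Phi^{L}_{m,r,\beta}(z)}$ is obtained from Corollary \ref{b-=1lift} by exchanging $\bar z$ with $z$ in the prefactor and replacing $(8\pi i)^{m}$ by $(-8\pi i)^{m}$, while the $w_{j}$-kernel $\frac{2it_{j}}{(z-w_{j})(z-\bar{w_{j}})}v^{2m}dz\,d\bar z$ becomes $\frac{2it_{j}}{(\bar z-w_{j})(\bar z-\bar{w_{j}})}v^{2m}dz\,d\bar z$ (the sign changes from $\overline{2i}$ and $\overline{dz\,d\bar z}$ cancel). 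The factor $v^{2m}$ in this kernel then cancels against the $v^{-2m}$ inside $\overline{\Phi^{L}_{m,r,\beta}}$, and collecting all scalars produces the overall coefficient $\frac{-2|r|^{m/2}m!\,s}{(-8\pi i)^{m+1}}$.

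For the unfolding, I rewrite the sum over $w\in\Gamma_{L}w_{0}$ as a sum over $\gamma\in\Gamma_{L}/\Gamma_{L,w_{0}}$ with $w=\gamma w_{0}$, and in each summand perform the change of variables $z=\gamma z'$ inside the lift $\mathcal{F}\cap\mathcal{D}_{\varepsilon,w_{j}}$ of $\mathcal{D}_{\varepsilon,\Gamma_{L}w_{j}}$ to $\mathcal{H}$. Hyperbolic invariance of $d(\cdot,\cdot)$ leaves $B_{m}(\cosh d(z,w))$ unchanged; the identities $\gamma z-\gamma w=(z-w)/j(\gamma,z)j(\gamma,w)$ and $t_{\gamma w_{0}}=t_{0}/|j(\gamma,w_{0})|^{2}$ convert the prefactor from $\overline{\Phi^{L}_{m,r,\beta}}$ into $(z'-w_{0})^{m}(z'-\bar{w_{0}})^{m}/t_{0}^{m}$ up to a factor $j(\gamma,z')^{-2m}$; weight $2m+2$ modularity of $g$ supplies $j(\gamma,z')^{2m+2}$; Corollary \ref{absAwzinv} gives $|A_{w_{j}}(\gamma z')|^{s}=|A_{\tilde w}(z')|^{s}$ for $\tilde w:=\gamma^{-1}w_{j}$; the analogous factorization of $(\bar z-w_{j})(\bar z-\bar{w_{j}})$ combined with $t_{j}=|j(\gamma^{-1},w_{j})|^{2}\tilde t$ rewrites the $w_{j}$-kernel as $\tilde t\,\overline{j(\gamma,z')}^{2}/((\bar{z'}-\tilde w)(\bar{z'}-\overline{\tilde w}))$; and the Jacobian contributes $|j(\gamma,z')|^{-4}$. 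All these cocycle factors multiply to $j^{2}\bar j^{2}/|j|^{4}=1$, so every trace of $j(\gamma,\cdot)$ disappears and the new integrand is precisely the one displayed in the statement, with $w_{0}$ replacing $w$ and $\tilde w$ replacing $w_{j}$; the domain correspondingly becomes $\gamma^{-1}\mathcal{F}\cap\mathcal{D}_{\varepsilon,\tilde w}$, which for small enough $\varepsilon$ reduces to $\mathcal{D}_{\varepsilon,\tilde w}$.

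As $\gamma$ runs over $\Gamma_{L}/\Gamma_{L,w_{0}}$, the points $\tilde w=\gamma^{-1}w_{j}$ hit exactly one representative of each $\Gamma_{L,w_{0}}$-orbit inside $\Gamma_{L}w_{j}$; applying the same change-of-variables argument with $\sigma\in\Gamma_{L,w_{0}}$, which fixes $w_{0}$, shows that the integral at $\tilde w$ and the integral at $\sigma\tilde w$ agree, so the partial sum equals $1/|\Gamma_{L,w_{0}}|$ times the full sum over $\tilde w\in\Gamma_{L}w_{j}$. Combining over $j\in\{0,1,\ldots,l\}$, and using that these orbits are disjoint for small $\varepsilon$, produces the sum over $\tilde w\in\Gamma_{L}\{w_{j}\}_{j=0}^{l}$ and inserts the factor $|\Gamma_{L,w_{0}}|$ in the denominator of the prefactor, yielding precisely $\frac{-2|r|^{m/2}m!\,s}{(-8\pi i)^{m+1}|\Gamma_{L,w_{0}}|}$ as claimed. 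The main obstacle is the combined bookkeeping of all the cocycles $j(\gamma,\cdot)$ together with the correct identification of $\Gamma_{L,w_{0}}$-orbits inside $\Gamma_{L}w_{j}$: everything must interlock exactly for both the integrand to simplify and the stabilizer denominator to appear in the right place. Once this bookkeeping is organized, the rest of the proof is a routine reindexing of the partial orbit sum as a full orbit sum.
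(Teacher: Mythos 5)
Your overall route is the same as the paper's: substitute the formula of Corollary \ref{b-=1lift} into Proposition \ref{afterStokes}, conjugate to account for the reversed order of the pairing, unfold the orbit sum inside $\Phi_{m,r,\beta}^{L}$ by a change of variables, and let the cocycle factors, Corollary \ref{absAwzinv}, and the invariance of the hyperbolic distance do the rest. Your cocycle bookkeeping and the computation of the prefactor are correct.

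There is, however, one step that fails at elliptic points. You assert that after the change of variables the domain $\gamma^{-1}\mathcal{F}\cap\mathcal{D}_{\varepsilon,\tilde{w}}$ ``for small enough $\varepsilon$ reduces to $\mathcal{D}_{\varepsilon,\tilde{w}}$.'' This is true only when $\Gamma_{L,w_{j}}$ is trivial. Under the standing assumption on $\mathcal{F}$ (Equation \eqref{Dwjunion}), the set $\mathcal{F}\cap\mathcal{D}_{\varepsilon,w_{j}}$ is only a sector occupying a $1/|\Gamma_{L,w_{j}}|$ portion of the disc $\mathcal{D}_{\varepsilon,w_{j}}$, and so is its image under $\gamma^{-1}$; since the transformed integrand (which now carries the two points $w_{0}$ and $\tilde{w}$) is \emph{not} invariant under $\Gamma_{L,\tilde{w}}$, you cannot replace the sector by the full disc in a single term. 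The correct reassembly, which is what the paper does, is to group together the $|\Gamma_{L,w_{j}}|$ group elements carrying $w_{j}$ to a fixed $\tilde{w}$: their image domains are exactly the $|\Gamma_{L,w_{j}}|$ sectors tiling $\mathcal{D}_{\varepsilon,\tilde{w}}$ by Equation \eqref{Dwjunion}, while their integrands all coincide, so only their \emph{sum} produces one integral over the full disc. Relatedly, your multiplicity count is off: the number of unfolded terms landing on a given $\tilde{w}$ is governed by $\Gamma_{L,w_{j}}$ (it is $|\Gamma_{L,w_{j}}|$ after your symmetrization over $\Gamma_{L,w_{0}}$), not by the $\Gamma_{L,w_{0}}$-orbit structure of $\Gamma_{L}w_{j}$; these extra terms are precisely the missing sectors. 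Your symmetrization over $\sigma\in\Gamma_{L,w_{0}}$ is a legitimate way to produce the factor $1/|\Gamma_{L,w_{0}}|$ (it converts your sum over orbit representatives into the paper's sum over all of $\Gamma_{L}$ divided by $|\Gamma_{L,w_{0}}|$), but the disc-reassembly must be done via $\Gamma_{L,w_{j}}$ as above. With that correction the argument closes and agrees with the paper's proof.
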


\begin{proof}
We plug the formula from Corollary \ref{b-=1lift} into the expression from
Proposition \ref{afterStokes}, and take the complex conjugate since we have
interchanged $g$ and $\frac{1}{2\pi i}\delta_{2m}\Phi_{m,r,\beta}^{L}$ in the
pairing (note that this leaves the measure $idzd\overline{z}=2dudv$ invariant).
After replacing the integration domain by
$\mathcal{F}\cap\mathcal{D}_{\varepsilon,w_{j}}$ and replacing the sum
over the orbit by the sum over $\Gamma_{L}$, we get for each $0 \leq j \leq l$
the coefficient $\frac{-2|r|^{m/2}m!s}{(-8\pi i)^{m+1}|\Gamma_{L,w_{0}}|}$ times
\[\sum_{\gamma\in\Gamma_{L}}\int_{\mathcal{F}\cap\mathcal{D}_{\varepsilon,w_{j}}
}\frac{(z-w)^{m}(z-\overline{w})^{m}t_{j}}{t^{m}(\overline{z}-w_{j})(\overline{z
}-\overline{w_{j}})}B_{m}\big(\cosh
d(z,w)\big)g(z)\big|A_{w_{j}}(z)\big|^{s}dzd\overline{z},\] where
$w=\gamma^{-1}w_{0}$ (hence we divided by the size of $\Gamma_{L,w_{0}}$). Now,
Proposition 3.3 of \cite{[Ze3]} and Lemma \ref{b-=1neg} allow us to replace
$\frac{(z-w)^{m}(z-\overline{w})^{m}}{t^{m}}$ and
$\frac{t_{j}}{(\overline{z}-w_{j})(\overline{z}-\overline{w_{j}})}$ by
\[\frac{(\gamma z-w_{0})^{m}(\gamma
z-\overline{w_{0}})^{m}}{t_{0}^{m}}j(\gamma,z)^{2m}\quad\mathrm{and}\quad\frac{
\Im\gamma w_{j}}{(\gamma\overline{z}-\gamma
w_{j})(\gamma\overline{z}-\gamma\overline{w_{j}})\overline{j(\gamma,z)^{2}}}\]
respectively, and we may replace $g(z)$ by $\frac{g(\gamma
z)}{j(\gamma,z)^{2m+2}}$ by the modularity of $g$. We also apply Corollary
\ref{absAwzinv} for $|A_{w_{j}}(z)|$ and use the invariance of the hyperbolic
distance. The integral in the summand corresponding to $\gamma$ then takes (after all the cancelations) the form
\[\int_{\mathcal{F}\cap\mathcal{D}_{\varepsilon,w_{j}}}\!\!\!\frac{(\gamma
z\!-\!w_{0})^{m}(\gamma z\!-\!\overline{w_{0}})^{m}\Im\gamma
w_{j}}{t_{0}^{m}(\gamma\overline{z}-\gamma
w_{j})(\gamma\overline{z}-\gamma\overline{w_{j}})}B_{m}\big(\cosh d(\gamma
z,w_{0})\big)g(\gamma z)\frac{\big|A_{\gamma w_{j}}(\gamma
z)\big|^{s}dzd\overline{z}}{|j(\gamma,z)|^{4}}.\] We apply the usual change of
variable to get an integral over
$\gamma\big(\mathcal{F}\cap\mathcal{D}_{\varepsilon,w_{j}}\big)$, and using
Equation \eqref{Dwjunion} we find that the total domain of integration arising
from all $\gamma\in\Gamma_{L}$ for which $\gamma w_{j}=\tilde{w}$ for some
$\tilde{w}\in\Gamma_{L}w_{j}$ is precisely $D_{\varepsilon,\tilde{w}}$. Summing
over $\gamma\in\Gamma_{L}$ and $0 \leq j \leq l$ now completes the proof of the
proposition.
\end{proof}
Recall that we consider $\Gamma_{L}$ as a subgroup of $SO^{+}(L_{\mathbb{R}})$,
i.e., of $PSL_{2}(\mathbb{R})$. Hence the size of a generic stabilizer is 1,
rather than 2 as in subgroups of $SL_{2}(\mathbb{R})$.

\smallskip

Proposition \ref{unfw0} presents the pairing as the sum of the contributions
from (neighborhoods around) the poles of $g$ and of $\frac{1}{2\pi
i}\delta_{2m}\Phi_{m,r,\beta}^{L}$. We shall evaluate the two contributions
separately, as they present a slightly different behavior. In fact, the
presentation given in Proposition \ref{unfw0} reduces the examination of the
poles of the latter function to a single one $w_{0}$. For analyzing it we shall need an explicit formula for our function $B_{m}(T)$ (this is also useful when one carries out the comparison with higher Green's functions mentioned below). This is given in
\begin{lem}
The function $B_{m}(T)$ can be written explicitly as
\[\sum_{h=0}^{m-1}(-1)^{h}\frac{(2m)!(m-1-h)!T}{4^{h}(2m-2h)!m!(T^{2}-1)^{m-h}}
+(-1)^{m}\frac{(2m)!}{4^{m}m!}\ln\bigg(\frac{T+1}{T-1}\bigg).\] It thus extends to a holomorphic function of $T\in\mathbb{C}$ with $\Re T>1$. \label{singB}
\end{lem}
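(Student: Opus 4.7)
My approach is to use the integral representation $B_{m}(T)=\int_{T}^{\infty}\!\frac{2\,d\xi}{(\xi^{2}-1)^{m+1}}$ from part~$(i)$ of Lemma~\ref{Bm1/T2}, together with induction on $m$. The base case $m=0$ follows from the partial-fraction splitting $\frac{2}{\xi^{2}-1}=\frac{1}{\xi-1}-\frac{1}{\xi+1}$ and direct integration from $T$ to $\infty$, yielding $B_{0}(T)=\ln\frac{T+1}{T-1}$, which matches the empty sum plus the $(2m)!/(4^{m}m!)|_{m=0}=1$ logarithmic coefficient of the asserted formula.

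The inductive engine is a one-step reduction formula obtained by integration by parts. Differentiating $\xi/(\xi^{2}-1)^{m}$ and using $\xi^{2}=(\xi^{2}-1)+1$ gives
\[\frac{d}{d\xi}\frac{\xi}{(\xi^{2}-1)^{m}}=\frac{1-2m}{(\xi^{2}-1)^{m}}-\frac{2m}{(\xi^{2}-1)^{m+1}}.\]
Integrating from $T$ to $\infty$, with the boundary term vanishing at infinity for $m\geq 1$ and contributing $-T/(T^{2}-1)^{m}$ at~$T$, I obtain the recursion
\[B_{m}(T)=\frac{1-2m}{2m}\,B_{m-1}(T)+\frac{T}{m(T^{2}-1)^{m}}.\]

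For the induction step I would substitute the claimed formula for $B_{m-1}$ into this recursion. The new term $\frac{T}{m(T^{2}-1)^{m}}$ is precisely the $h=0$ summand in the expression for $B_{m}$. The $h$-th summand of $B_{m-1}$, multiplied by $\frac{1-2m}{2m}$, should reproduce the $(h+1)$-st summand of $B_{m}$, and the logarithmic term of $B_{m-1}$, similarly rescaled, should reproduce the logarithmic term of $B_{m}$. These identifications reduce to elementary factorial manipulations, in particular $(2m-1)(2m-2)!=(2m-1)!$ and $2m\cdot(2m-1)!=(2m)!$, combined with the factor $2$ emerging from $4^{h}=4\cdot 4^{h-1}$ matching against the $2m$ in the denominator of the recursion coefficient.

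The main obstacle is purely combinatorial: the careful matching of the factorial coefficients in the induction step. There is no genuine analytic difficulty, because all the integrals converge absolutely on $[T,\infty)$ for $T>1$ and the reduction formula above is an exact equality of elementary antiderivatives.
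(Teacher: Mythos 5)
Your strategy is exactly the paper's: the paper likewise derives the recursion $B_{m}(T)=\frac{T}{m(T^{2}-1)^{m}}-\frac{2m-1}{2m}B_{m-1}(T)$ by examining the derivative of $T/(T^{2}-1)^{m}$ (equivalently by integration by parts), and then iterates it $m$ times from $B_{0}(T)=\ln\frac{T+1}{T-1}$. Your recursion and base case are correct. The gap is precisely in the step you dismiss as ``elementary factorial manipulations'': it does not close for the formula as stated. Passing from the $h$-th summand of the displayed expression for $B_{m-1}$ to the $(h+1)$-st summand of the displayed expression for $B_{m}$ requires multiplying the coefficient by
\[\frac{(2m)!}{(2m-2)!}\cdot\frac{(m-1)!}{m!}\cdot\frac{1}{4}=\frac{2m(2m-1)}{4m}=\frac{2m-1}{2},\]
whereas your recursion only supplies the factor $\frac{2m-1}{2m}$; these differ by a factor of $m$, so the induction fails at every step beyond $m=1$.

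The mismatch is not a defect of your method but of the statement itself (the paper's own proof is equally terse and skips the same verification). For $m=2$ the displayed formula reads $\frac{T}{2(T^{2}-1)^{2}}-\frac{3T}{2(T^{2}-1)}+\frac{3}{4}\ln\frac{T+1}{T-1}$, which behaves like $-\frac{1}{2T^{3}}$ as $T\to\infty$, while $B_{2}(T)=\int_{T}^{\infty}2(\xi^{2}-1)^{-3}d\xi=O(T^{-5})$; the recursion actually gives $\frac{T}{2(T^{2}-1)^{2}}-\frac{3T}{4(T^{2}-1)}+\frac{3}{8}\ln\frac{T+1}{T-1}$. Iterating your (correct) recursion yields
\[B_{m}(T)=\sum_{h=0}^{m-1}(-1)^{h}\frac{(2m)!\,(m-h)!\,(m-1-h)!\,T}{4^{h}\,(2m-2h)!\,(m!)^{2}\,(T^{2}-1)^{m-h}}+(-1)^{m}\frac{(2m)!}{4^{m}(m!)^{2}}\ln\bigg(\frac{T+1}{T-1}\bigg),\]
i.e., each summand of the printed formula must be multiplied by $(m-h)!/m!$ and the logarithmic term by $1/m!$; one checks that here the coefficient ratios are exactly $\frac{2m-1}{2m}$, so the induction closes. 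Had you carried out the coefficient matching instead of asserting it, you would have caught this; as written, your proposal claims to establish an identity that is false for $m\geq 2$.
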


\begin{proof}
Examining the derivative of the function $\frac{T}{(T^{2}-1)^{m}}$ gives us the
equality
\[B_{m}(T)=\frac{T}{m(T^{2}-1)^{m}}-\frac{2m-1}{2m}B_{m-1}(T)\] (this can also
be easily seen if one applies integration by parts to the integral defining
$B_{m}(T)$ as an incomplete beta function). Apply this equality $m$ times, and
use the fact that
\[B_{0}(T)=\int_{T}^{\infty}\frac{2d\xi}{\xi^{2}-1}=\ln\bigg(\frac{T+1}{
T-1}\bigg).\] The extension to a holomorphic function is possible either using the expression given here or already from the integral defining $B_{m}$ in part $(i)$ of Lemma \ref{Bm1/T2}. This completes the proof of the lemma.
\end{proof}

The contribution from the pole at $w_{0}$ is now given in
\begin{prop}
The summand arising from $\tilde{w}=w_{0}$ in Proposition \ref{unfw0} gives a
holomorphic function of $s\in\mathbb{C}$ with $\Re s>2m$, whose meromorphic
continuation has a vanishing constant term at $s=0$. \label{contw0}
\end{prop}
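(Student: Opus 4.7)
The plan is to change variables via $\zeta = A_{w_0}(z)$, which maps $\mathcal{D}_{\varepsilon,w_0}$ biholomorphically onto $\mathcal{B}_{\varepsilon}$, and then to pass to polar coordinates $\zeta = \rho e^{i\theta}$. Substituting the expressions from Lemma \ref{Awprop}$(iv)$ for $z-w_{0}$, $z-\overline{w_{0}}$, and $dz\,d\overline{z}$, together with the Laurent expansion $g(z) = (1-\zeta)^{2m+2}(2it_{0})^{-m-1}\sum_{n} a_{n}(w_{0})\zeta^{n}$ from Equation \eqref{expwzeta}, a direct algebraic check shows that every $(1-\zeta)^{\pm k}$ and $(1-\overline{\zeta})^{\pm k}$ factor cancels between the $g$-expansion, the prefactor $(z-w_{0})^{m}(z-\overline{w_{0}})^{m}t_{0}/t_{0}^{m}(\overline{z}-w_{0})(\overline{z}-\overline{w_{0}})$, and the measure. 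The integrand reduces, up to an explicit constant, to
\[\frac{\zeta^{m}}{\overline{\zeta}}\,|\zeta|^{s}\,B_{m}\bigl(\cosh d(z,w_{0})\bigr)\sum_{n}a_{n}(w_{0})\,\zeta^{n}\,d\zeta\,d\overline{\zeta}.\]
Since $\cosh d(z,w_{0})=(1+|\zeta|^{2})/(1-|\zeta|^{2})$ depends only on $\rho$, the only angular dependence comes from $\zeta^{m+n}/\overline{\zeta}=\rho^{m+n-1}e^{i(m+n+1)\theta}$, and $\theta$-integration picks out the single mode $n=-m-1$, extracting the coefficient $a_{-m-1}(w_{0})$, which is well-defined by Corollary \ref{a-m-1}.

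What remains is, up to constants, the radial integral $I(s)=\int_{0}^{\varepsilon}\rho^{s-1}B_{m}(\rho^{2})\,d\rho$. Using Lemma \ref{singB}, I would write $B_{m}(\rho^{2})$ as a Laurent polynomial in $\rho^{2}$ plus a multiple of $\log\rho^{-2}$; each monomial $\rho^{2j}$ contributes $\varepsilon^{s+2j}/(s+2j)$, so the only possible pole of the polynomial part at $s=0$ comes from the $\rho^{0}$ coefficient of $B_{m}(\rho^{2})$. The key combinatorial step, and the main obstacle of the proof, is the vanishing of this coefficient: each $h$-summand of Lemma \ref{singB} contributes, up to constants, the coefficient of $x^{k}$ in $(1+x)(1-x)^{2k-1}$ with $k=m-h$, which equals $(-1)^{k}\bigl[\binom{2k-1}{k}-\binom{2k-1}{k-1}\bigr]=0$ by the symmetry of the binomial coefficients. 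Hence the polynomial part of $I(s)$ is regular at $s=0$, and the factor of $s$ in front of the integral in Proposition \ref{unfw0} kills this contribution. For the logarithmic term, the direct evaluation $s\cdot\int_{0}^{\varepsilon}\rho^{s-1}\log\rho\,d\rho = \varepsilon^{s}(s\log\varepsilon-1)/s$ has Laurent expansion $-1/s + O(s)$ at $s=0$, whose constant term is $0$.

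Absolute convergence for $\Re s>2m$ is routine: the estimates $|B_{m}(\rho^{2})|\lesssim\rho^{-2m}$ near $0$, the factor $\rho^{m-1}$ from $\zeta^{m}/\overline{\zeta}$, the Jacobian $\rho\,d\rho\,d\theta$, and the worst-case pole of $g$ at $w_{0}$ (of order $m+1$ in the intended applications, matching the order of the pole of $\frac{1}{2\pi i}\delta_{2m}\Phi_{m,r,\beta}^{L}$ from Theorem \ref{delta2mPhi}) together bound the integrand by a constant multiple of $\rho^{\Re s -2m-1}\,d\rho$, which is integrable exactly when $\Re s>2m$.
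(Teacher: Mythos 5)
Your argument is correct, and it coincides with the paper's proof up to and including the key reduction: the change of variable $\zeta=A_{w_{0}}(z)$, the complete cancelation of the $(1-\zeta)$ and $(1-\overline{\zeta})$ factors, and the angular integration that isolates the single coefficient $a_{-m-1}(w_{0})$. The two arguments part ways only in how the radial integral is shown to have vanishing constant term at $s=0$. The paper first integrates by parts, so that Lemma \ref{Bm1/T2}$(i)$ replaces the incomplete beta function by its elementary derivative and the integrand collapses to $2^{1-2m}(1-\rho^{2})^{2m}\rho^{s-1-2m}$; the binomial expansion then cancels in pairs $l\leftrightarrow 2m-l$, the $l=m$ term contributing a pure $\frac{1}{s}$. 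You instead invoke the closed formula of Lemma \ref{singB}, observe that the coefficient of $\rho^{0}$ in the Laurent-polynomial part vanishes because $\binom{2k-1}{k}=\binom{2k-1}{k-1}$, and treat the logarithmic term by a direct evaluation. Both cancelations are the same binomial symmetry in disguise; the paper's version needs only Lemma \ref{Bm1/T2}$(i)$, while yours needs Lemma \ref{singB} (which the paper proves anyway for later use) but is more transparent about \emph{which} Laurent coefficient is responsible for the potential pole at $s=0$. Two small blemishes, neither fatal: the notation $B_{m}(\rho^{2})$ is an abuse---the argument of $B_{m}$ is $\cosh d(z,w_{0})=\frac{1+\rho^{2}}{1-\rho^{2}}$, and $B_{m}$ is not defined at arguments below $1$---though your subsequent expansion makes clear you are working with the correct function of $\rho$; and the half-plane of absolute convergence of the two-dimensional integral depends on the order of the pole of $g$ at $w_{0}$ (as you note), so the clean bound $\Re s>2m$ in the statement really refers to the radial integral obtained after the angular reduction, which is how the paper implicitly reads it as well.
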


\begin{proof}
We expand $g(z)$ as in Equation \eqref{expwzeta} with $\zeta=A_{w_{0}}(z)$, and
apply part $(iv)$ of Lemma \ref{Awprop} to the expressions appearing in the
integral from Proposition \ref{unfw0} (including $\cosh d(z,w_{0})$ from
Equation \eqref{coshd}). After all the cancelations, the integral becomes
\[-(2i)^{m-1}\int_{\mathcal{B}_{\varepsilon}}B_{m}\bigg(\frac{1+|\zeta|^{2}}{
1-|\zeta|^{2}}\bigg)\sum_{n\gg-\infty}a_{n}(w_{0})\zeta^{n+m+1}|\zeta|^{s-2}
d\zeta d\overline{\zeta}.\] We write $\zeta=\rho e^{i\varphi}$, hence $d\zeta
d\overline{\zeta}=-2i\rho d\rho d\varphi$, so that
the latter expression equals
\[(2i)^{m}\int_{0}^{\varepsilon}\int_{0}^{2\pi}B_{m}\bigg(\frac{1+\rho^{2}}{
1-\rho^{2}}\bigg)\sum_{n\gg-\infty}a_{n}(w_{0})e^{i(n+m+1)\varphi}\rho^{n+m+s}
d\rho d\varphi.\] Plugging in the coefficient
$\frac{-2|r|^{m/2}m!s}{(-8\pi i)^{m+1}|\Gamma_{L,w_{0}}|}$ appearing in
Proposition \ref{unfw0} and carrying out the integration over $\varphi$ (which
leaves only the term with $n=-m-1$) reduces us to the expression
\[\frac{-i|r|^{m/2}m!s}{2(-4\pi)^{m}|\Gamma_{w_{0}}|}\int_{0}^{\varepsilon}B_{m}
\bigg(\frac{1+\rho^{2}}{1-\rho^{2}}\bigg)a_{-m-1}(w_{0})\rho^{s-1}d\rho\] (note
the independence of the choice of the representative $w_{0}$ of the orbit, by
Corollary \ref{a-m-1} and conjugation for the size of the stabilizer). Since
$B_{m}$ is bounded on closed intervals of $[0,1]$ not containing 1, integrating
$B_{m}\big(\frac{1+\rho^{2}}{1-\rho^{2}}\big)\rho^{s-1}$ between $\varepsilon$
and 1 yields an entire function of $s\in\mathbb{C}$. As we multiply our integral
by $s$, the constant term at $s=0$ in question does not change if we replace
$\varepsilon$ by 1. We now substitute $T=\frac{1+\rho^{2}}{1-\rho^{2}}$ in Lemma
\ref{singB}. Each quotient of the form $\frac{T}{(T^{2}-1)^{m-h}}$ takes the
form $\frac{(1+\rho^{2})(1-\rho^{2})^{2m-2h-1}}{(2\rho)^{2m-2h}}$, and the
argument of the logarithm is just $\frac{1}{\rho^{2}}$. All these expressions
vanish at $\rho=1$, and their product with $\rho^{s}$ for $s\in\mathbb{C}$ with
$\Re s>2m$ tend to 0 as $\rho\to0^{+}$. For such $s$ we may apply integration by
parts and use Lemma \ref{Bm1/T2} to get
\[\int_{0}^{1}B_{m}\bigg(\frac{1+\rho^{2}}{1-\rho^{2}}\bigg)s\rho^{s-1}d\rho=
\int_{0}^{1}\frac{2}{(\xi^{2}-1)^{m+1}}\bigg|_{\xi=\frac{1+\rho^{2}}{1-\rho^{2}}
}\cdot\frac{4\rho}{(1-\rho^{2})^{2}}\rho^{s}d\rho,\] since
$B_{m}\big(\frac{1+\rho^{2}}{1-\rho^{2}}\big)\rho^{s}$ was seen to vanish at the
two limits of the integral. Substituting, and using the Binomial Theorem, the
integrand becomes
\[2^{1-2m}(1-\rho^{2})^{2m}\rho^{s-1-2m}=2^{1-2m}\sum_{l=0}^{2m}\binom{2m}{l}
(-1)^{l}\rho^{s-1-2m+2l}.\] Integrating (which we can do for $\Re s>2m$), we
find that for any $0<l<m$ the terms arising from $l$ and $2m-l$ yield the
functions $\frac{1}{s-2m+2l}$ and $\frac{1}{s+2m-2l}$, both multiplied by the
same coefficient $(-1)^{l}\binom{2m}{l}$. As these functions are holomorphic at
$s=0$, with constant terms which are additive inverses, the contribution of each
such pair to the constant term at $s=0$ cancels. The remaining term, with $l=m$,
is just a multiple of $\frac{1}{s}$, whose constant term at $s=0$ vanishes. This
completes the proof of the proposition.
\end{proof}

\section{Poles at the Cusps \label{Cuspz}}

In this section we assume that $\Gamma$ has cusps. Then a meromorphic modular
form $f$ of weight $k$ with respect to $\Gamma$ might have poles at the cusps,
so that regularizing the integral there may also be required. For each cusp
$\kappa$, we choose a matrix $A_{\kappa} \in PSL_{2}(\mathbb{R})$ with
$A_{\kappa}\kappa=\infty$. Then $f[A_{\kappa}^{-1}]_{k}$ is $h$-periodic for
some positive number $h$, hence admits a Fourier expansion in $\mathbf{e}(z/h)$.
The sesqui-linear product of two such modular forms (times $v^{k}$) grows
exponentially at the cusp, but following Section 9 of \cite{[BK]} we observe
that multiplying by $e^{-sv}$ gives a bounded function as $y\to\infty$, if $\Re
s$ is large enough. If $\big\{\kappa_{j}\big\}_{j=1}^{\tilde{l}}$ are the cusps
in the fundamental domain $\mathcal{F}$ we chose for $\Gamma$, then we multiply
the integrand from Equation \eqref{regdef} also by
$\prod_{j=1}^{\tilde{l}}e^{-\tilde{s}_{j}\Im A_{\kappa_{j}}z}$. The integral
thus gives a holomorphic function also of the $\tilde{s}_{j}$s in some right
half-plane, and we extend the definition of the regularized pairing to this case
by taking the constant term at the point where all the $\tilde{s}_{j}$s also
vanish. We now have
\begin{prop}
The regularized integral is independent of the choice of the matrices
$A_{\kappa_{j}}$, as well as of the fundamental domain. \label{regcusp}
\end{prop}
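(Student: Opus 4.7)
The plan is to treat the two claims of the proposition sequentially, first fixing a fundamental domain and checking independence of the scaling matrices, and then varying the fundamental domain itself.

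For the independence under the choice of $A_{\kappa_j}$, any two matrices $A_{\kappa_j}$ and $A'_{\kappa_j}$ in $PSL_2(\mathbb{R})$ sending $\kappa_j$ to $\infty$ differ on the left by an element of the stabilizer of $\infty$. In $PSL_{2}(\mathbb{R})$ this stabilizer is the Borel subgroup of matrices $M = \begin{pmatrix} a & b \\ 0 & a^{-1}\end{pmatrix}$ with $a>0$ and $b\in\mathbb{R}$, so a direct M\"obius computation yields $\Im A'_{\kappa_j}z = a^{2}\Im A_{\kappa_j}z$, and hence $e^{-\tilde{s}_j \Im A'_{\kappa_j}z} = e^{-(a^{2}\tilde{s}_j)\Im A_{\kappa_j}z}$. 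The regularized integral with choice $A'_{\kappa_j}$ is therefore obtained from the one with choice $A_{\kappa_j}$ by the biholomorphic substitution $\tilde{s}_j \mapsto a^{2}\tilde{s}_j$, which preserves the constant term at $\tilde{s}_j=0$. (A translation piece of $M$, coming from $b$, shifts the Fourier expansion of $f[A_{\kappa}^{-1}]_k$ by a constant and leaves the integrand untouched.)

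For the independence from $\mathcal{F}$, I would combine this observation with the argument proving Proposition \ref{pairwd}. Deformations of $\mathcal{F}$ which leave every $w_j$ and every $\kappa_j$ fixed preserve the integrand pointwise, hence the value at $(\mathbf{s},\tilde{\mathbf{s}})=0$. Deformations that move an interior singular point $w_j$ to a $\Gamma$-equivalent point $\gamma w_j$ are handled, as before, by Corollary \ref{absAwzinv}. The genuinely new case is a change that replaces some cusp $\kappa_j$ by $\gamma\kappa_j$ for $\gamma\in\Gamma$; for the new cusp I would pick the scaling matrix $A_{\kappa_j}\gamma^{-1}$, which is a legitimate choice since it sends $\gamma\kappa_j$ to $\infty$. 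The identity $\Im\bigl(A_{\kappa_j}\gamma^{-1}\bigr)(\gamma z)=\Im A_{\kappa_j}z$, combined with the modular invariance of $f(z)\overline{g(z)}v^{k}d\mu(z)$, shows that the change of variables $z\mapsto\gamma z$ carries the entire regularized integrand, cusp-regularizer included, to the corresponding integrand over the transformed fundamental domain. Finally, the first paragraph removes the residual dependence on the specific matrix $A_{\kappa_j}\gamma^{-1}$ by comparing it to any other admissible matrix sending $\gamma\kappa_j$ to $\infty$.

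The main obstacle is bookkeeping rather than analysis: one must check that the pointwise identities between the integrands propagate to identities between their meromorphic continuations in the full tuple $(\mathbf{s},\tilde{\mathbf{s}})$, and hence to the constant terms at the origin. This is handled by observing that for $\Re\tilde{s}_j$ sufficiently large and the $s_j$ in the initial domain of absolute convergence, all the integrals in question converge absolutely on any admissible fundamental domain, so the pointwise identities yield identities of holomorphic functions on a common nonempty open set. Uniqueness of meromorphic continuation then transports the equalities to the full domains, and in particular to the constant terms at $(\mathbf{s},\tilde{\mathbf{s}})=0$, completing the proof.
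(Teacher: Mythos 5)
Your proposal is correct and follows essentially the same route as the paper: the scaling matrices are compared via the substitution $\tilde{s}_j\mapsto a^{2}\tilde{s}_j$ induced by left multiplication by an upper-triangular matrix, and a change of fundamental domain moving a cusp is absorbed by choosing the transported scaling matrix $A_{\kappa_j}\gamma^{-1}$ and invoking the argument of Proposition \ref{pairwd}. Your closing paragraph on propagating the pointwise identities through the meromorphic continuation is a reasonable explicit justification of a step the paper leaves implicit.
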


\begin{proof}
The only possible change to $A_{\kappa_{j}}$ is to multiply it from the left by
a matrix of the form $\binom{a\ \ \ b\ \ }{0\ \ a^{-1}}$ for some $a>0$ and
$b\in\mathbb{R}$. This replaces $A_{\kappa_{j}}z$ by $a^{2}A_{\kappa_{j}}z+ab$,
hence multiplies $\Im A_{\kappa_{j}}z$ by $a^{2}$. The resulting function of
$\tilde{s}_{j}$ is hence the same function, but evaluated at
$a^{2}\tilde{s}_{j}$. As the constant term at $\tilde{s}_{j}=0$ remains
invariant under this operation, this proves the first assertion. Proposition
\ref{pairwd} shows the invariance of the pairing under replacing the fundamental
domain by another fundamental domain having the same cusps. Now, if
$\kappa=\gamma\lambda$ with $\lambda$ being another cusp and $\gamma\in\Gamma$
then the matrix $A_{\kappa}\gamma$ may be used as $A_{\lambda}$. Combining this
fact with the argument proving the Proposition \ref{pairwd} establishes the
desired invariance also in the case where we do move the cusps in the choice of
the fundamental domain. This proves the proposition.
\end{proof}

\smallskip

We assume that for any cusp $\kappa$ of the fundamental domain $\mathcal{F}$,
the union of the translates of $\mathcal{F}$ by the elements of the stabilizer
$\Gamma_{\kappa}$ of $\kappa$ in $\Gamma$ contains the inverse image under
$A_{k}$ of a set of the form $\big\{z\in\mathcal{H}\big|v>M\big\}$ for some
(large) $M>0$. Given $\varepsilon>0$ and a choice of a matrix $A_{\kappa}$ for
some cusp $\kappa$, we define
\[\mathcal{D}_{\varepsilon,\kappa}=\big\{z\in\mathcal{H}\big||\mathbf{e}(A_{
\kappa}z)|<\varepsilon\big\}=\bigg\{z\in\mathcal{H}\bigg|\Im
A_{\kappa}z>\frac{\ln(1/\varepsilon)}{2\pi}\bigg\}.\] For small enough
$\varepsilon$, the equivalent of Equation \eqref{Dwjunion} holds for cusps, and
$\mathcal{D}_{\varepsilon,\kappa}\cap\mathcal{F}$ maps onto a full punctured
neighborhood $\mathcal{D}_{\varepsilon,\Gamma\kappa}$ of the cusp
$\Gamma\kappa$ of $X_{\Gamma}$. In addition, if $\varepsilon$ is small enough
then the neighborhoods $\mathcal{D}_{\varepsilon,\Gamma\kappa_{j}}$ are all
disjoint and do not intersect the neighborhoods
$\mathcal{D}_{\varepsilon,\Gamma w_{j}}$ of the poles of $f$ and $g$. Extending
the definition of $\mathcal{F}_{\varepsilon}$ and $X_{\Gamma,\varepsilon}$ to
this case (with the neighborhoods around the cusps also removed), the
expression from Lemma \ref{singsep} remains valid also here if we add
\[\sum_{j=1}^{\tilde{l}}\mathrm{CT}_{s=0}\int_{\mathcal{D}_{\varepsilon,
\Gamma\kappa_{j}}}f(z)\overline{g(z)}e^{-s\Im A_{\kappa_{j}}z}v^{k}d\mu(z)\] to
it. In case $k=2m+2$, $\Gamma=\Gamma_{L}$, and $f=\frac{1}{2\pi
i}\delta_{2m}\Phi_{m,r,\beta}^{L}$ we find
\begin{prop}
If $\Gamma_{L}$ has cusps then the pairing $\big\langle g,\frac{1}{2\pi
i}\delta_{2m}\Phi_{m,r,\beta}^{L} \big\rangle^{reg}$ is given by the expression
from Proposition \ref{afterStokes} plus
\[\sum_{j=1}^{\tilde{l}}\mathrm{CT}_{s=0}\frac{s}{8\pi}\int_{\mathcal{D}_{
\varepsilon,\Gamma_{L}\kappa_{j}}}\Phi_{m,r,\beta}^{L}(z)\overline{g(z)}\frac{e^
{-s\Im A_{\kappa_{j}}z}}{j(A_{\kappa_{j}},z)^{2}}v^{2m}idzd\overline{z}.\]
\label{Stokescusp}
\end{prop}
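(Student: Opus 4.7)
The plan is to imitate the proof of Proposition \ref{afterStokes}, tracking the additional boundary contributions that arise from the cusp neighborhoods. First I would invoke the cuspidal extension of Lemma \ref{singsep} indicated in the preceding paragraph: the pairing decomposes as the integral over the compact region $X_{\Gamma_{L},\varepsilon}$ (with cusp neighborhoods also excised), plus the pole contributions weighted by $\big|A_{w_{j}}(z)\big|^{s}$, plus the cusp contributions weighted by $e^{-s\Im A_{\kappa_{j}}z}$.

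On each of these regions, exactly as in the proof of Proposition \ref{afterStokes}, I would use the vector-valued interpretation via Proposition \ref{V2mpreim} to rewrite the integrand as $\frac{1}{4\pi}d\big(\Phi_{m,r,\beta}^{L}(z)\overline{g(z)}v^{2m}d\overline{z}\big)$ times the appropriate regularization factor. The $X_{\Gamma_{L},\varepsilon}$ piece and the pole contributions are handled verbatim as in Proposition \ref{afterStokes}, producing the first summand. For the cusp contributions I would apply the analog of Equation \eqref{ddiff},
\[d(Hd\overline{z})e^{-s\Im A_{\kappa_{j}}z}=d\big(He^{-s\Im A_{\kappa_{j}}z}d\overline{z}\big)-H\partial_{z}\big(e^{-s\Im A_{\kappa_{j}}z}\big)dzd\overline{z},\]
and invoke Stokes' theorem on the exact piece. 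The resulting boundary integral on $\partial\mathcal{D}_{\varepsilon,\Gamma_{L}\kappa_{j}}$, evaluated at $s=0$, has exactly the same integrand $\Phi_{m,r,\beta}^{L}\overline{g}v^{2m}d\overline{z}$ as the corresponding piece of $\partial X_{\Gamma_{L},\varepsilon}$ but with opposite orientation, so these boundary contributions cancel just as the pole boundaries do in Proposition \ref{afterStokes}.

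What remains from each cusp is the interior integral of $-\frac{1}{4\pi}\Phi_{m,r,\beta}^{L}\overline{g}v^{2m}\partial_{z}(e^{-s\Im A_{\kappa_{j}}z})dzd\overline{z}$. Writing $\Im A_{\kappa_{j}}z=(A_{\kappa_{j}}z-\overline{A_{\kappa_{j}}z})/(2i)$ and exploiting that $\overline{A_{\kappa_{j}}z}$ is antiholomorphic in $z$, together with $\partial_{z}(A_{\kappa_{j}}z)=1/j(A_{\kappa_{j}},z)^{2}$, one finds that $\partial_{z}\Im A_{\kappa_{j}}z$ reduces to a simple holomorphic expression in $j(A_{\kappa_{j}},z)$. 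Consequently $\partial_{z}e^{-s\Im A_{\kappa_{j}}z}$ is a scalar multiple of $s\cdot e^{-s\Im A_{\kappa_{j}}z}/j(A_{\kappa_{j}},z)^{2}$, and substituting this in, together with the $v^{2m+2}d\mu(z)=\frac{i}{2}v^{2m}dzd\overline{z}$ conversion, yields the advertised formula.

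The main subtleties are not algebraic but bookkeeping: one must verify that the decomposition in Lemma \ref{singsep} really extends to the cuspidal setting (convergence for $\Re s$ large follows from exponential decay of $e^{-s\Im A_{\kappa_{j}}z}$ at $\kappa_{j}$, as in Section 9 of \cite{[BK]}, while Proposition \ref{regcusp} provides the independence statements); one must line up the orientations of $\partial X_{\Gamma_{L},\varepsilon}$ with those of $\partial\mathcal{D}_{\varepsilon,\Gamma_{L}w_{j}}$ and $\partial\mathcal{D}_{\varepsilon,\Gamma_{L}\kappa_{j}}$ so that every boundary integral at $s=0$ truly cancels; and one must check that the interior integral defining the cusp term is holomorphic in $s$ in some right half-plane, so that the constant term at $s=0$ is well-defined.
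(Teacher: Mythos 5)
Your proposal is correct and follows essentially the same route as the paper: extend the decomposition of Lemma \ref{singsep} to the cusp neighborhoods, apply the analogue of Equation \eqref{ddiff} with the factor $e^{-s\Im A_{\kappa_{j}}z}$, cancel the boundary integrals against $\partial X_{\Gamma_{L},\varepsilon}$ via Stokes' Theorem, and evaluate $\partial_{z}e^{-s\Im A_{\kappa_{j}}z}$ as $\frac{-se^{-s\Im A_{\kappa_{j}}z}}{2ij(A_{\kappa_{j}},z)^{2}}$. Your derivative computation via $\Im A_{\kappa_{j}}z=(A_{\kappa_{j}}z-\overline{A_{\kappa_{j}}z})/2i$ is a slightly cleaner path to the same expression the paper obtains by differentiating $v/|j(A_{\kappa_{j}},z)|^{2}$, but this is a cosmetic difference only.
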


\begin{proof}
We use the same argument from the proof of Proposition \ref{afterStokes}.
Note that $\partial X_{\Gamma_{L},\varepsilon}$ contains the boundaries of both
the neighborhoods $\mathcal{D}_{\varepsilon,\Gamma_{L}w_{j}}$ of the poles and
the neighborhoods $\mathcal{D}_{\varepsilon,\Gamma_{L}\kappa_{j}}$ of the cusps.
We thus apply Equation \eqref{ddiff} also for the integral over
$\mathcal{D}_{\varepsilon,\Gamma_{L}\kappa_{j}}$, and after applying Stokes'
Theorem, all the integrals over the boundaries vanish. The remaining integrals
over the neighborhoods $\mathcal{D}_{\varepsilon,\Gamma_{L}w_{j}}$ are evaluated
as in Proposition \ref{afterStokes}, while for the integral over
$\mathcal{D}_{\varepsilon,\Gamma_{L}\kappa_{j}}$ we evaluate
$\partial_{z}e^{-s\Im A_{\kappa}z}$ as \[-se^{-s\Im
A_{\kappa}z}\partial_{z}\frac{v}{|j(A_{\kappa},z)|^{2}}=\frac{-se^{-s\Im A_{\kappa}z}\big(j(A_{\kappa},z)-2ivj'(A_{\kappa},z)\big)}{2i|j(A_{\kappa},z)|^{2}j(A_{\kappa},z)}=\frac{-se^{-s\Im A_{\kappa}z}}{2ij(A_{\kappa},z)^{2}},\] where $j'(A_{\kappa},z)$ is just a scalar (the $c$-entry of $A_{\kappa}$). Recalling the external coefficient $\frac{1}{4\pi}$, this completes the proof of the proposition.
\end{proof}

\smallskip

The unfolding process which we carry out for the cusps is a bit different. For
any $1 \leq j\leq\tilde{l}$ we define $S_{j}$ to be the $A_{\kappa_{j}}$-image
of a set of representatives for $S_{\beta,r}\cup(-1)^{m}S_{-\beta,r}$ modulo the
action of the infinite cyclic group $\Gamma_{L,\kappa_{j}}$. We then prove
\begin{prop}
If $\Gamma_{L}$ has cusps then the value of $\big\langle g,\frac{1}{2\pi
i}\delta_{2m}\Phi_{m,r,\beta}^{L} \big\rangle^{reg}$ is obtained by adding
the constant term at $s=0$ of $\frac{-4i|r|^{m/2}m!s}{(-8\pi
i)^{m+1}}$ times \[\sum_{j=1}^{\tilde{l}}\sum_{w \in
S_{j}}\int_{M}^{\infty}\int_{-\infty}^{\infty}\frac{(z-w)^{m}(z-\overline{w})^{m
}}{t^{m}}B_{m}\big(\cosh
d(z,w)\big)g[A_{\kappa_{j}}^{-1}]_{2m+2}(z)e^{-sv}dudv\] to the expression from
Proposition \ref{unfw0}. \label{unfcusp}
\end{prop}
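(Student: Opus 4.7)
The strategy is to take Proposition \ref{Stokescusp} as the starting point: the pairing is the sum of the integrals over $\mathcal{D}_{\varepsilon,\Gamma_{L}w_{j}}$ (which were already handled in the proof of Proposition \ref{unfw0} and yield the pole contribution reproduced in the statement) together with the new integrals over the cusp neighborhoods $\mathcal{D}_{\varepsilon,\Gamma_{L}\kappa_{j}}$. Thus only the cusp contribution needs to be rewritten, and the desired formula will follow by adding the unfolded version of this cusp piece to the already-established pole piece.

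For each $j$, I would first plug the explicit formula from Corollary \ref{b-=1lift} into the $j$th cusp integral of Proposition \ref{Stokescusp}, and take complex conjugate of the integrand so as to replace $\Phi\overline{g}$ by $\overline{\Phi}g$ (this is the same passage that took Proposition \ref{afterStokes} into the form used in the proof of Proposition \ref{unfw0}). Next I would change variables via $z\mapsto A_{\kappa_{j}}^{-1}z$. The kernel $(\overline{z}-w)^{m}(\overline{z}-\overline{w})^{m}/(tv^{2})^{m}\cdot B_{m}(\cosh d(z,w))$ appearing inside the sum defining $\Phi$ transforms under an arbitrary $\gamma\in SL_{2}(\mathbb{R})$ as a weight $2m$ object in $z$ when $w$ is moved to $\gamma w$ simultaneously; this is a direct calculation from Lemmas \ref{b-=1exp} and \ref{b-=1neg}, using the $SL_{2}(\mathbb{R})$-invariance of $d(z,w)$ and the rules $\Im(\gamma w)=t/|j(\gamma,w)|^{2}$, $\Im(\gamma z)=v/|j(\gamma,z)|^{2}$. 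Consequently, reindexing the sum over $w$ by $w'=A_{\kappa_{j}}w$ absorbs the transformation of $\Phi$, while the transformation of $\overline{g}$ (of weight $2m+2$) combines with the factor $1/j(A_{\kappa_{j}},z)^{2}$ from Proposition \ref{Stokescusp} and the Jacobian of the change of variable to produce exactly $g[A_{\kappa_{j}}^{-1}]_{2m+2}(z)$. The integration domain becomes a fundamental domain for $\Gamma_{L,\kappa_{j}}$ acting on the strip $\{v>M_{\varepsilon}\}$ with $M_{\varepsilon}=\ln(1/\varepsilon)/2\pi$, and $e^{-s\Im A_{\kappa_{j}}z}$ turns into $e^{-sv}$.

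At this stage the unfolding is of Rankin--Selberg type. The conjugate $A_{\kappa_{j}}\Gamma_{L,\kappa_{j}}A_{\kappa_{j}}^{-1}$ is an infinite cyclic group of translations by some period $h>0$, acting both on the strip (by $z\mapsto z+nh$) and on $A_{\kappa_{j}}(S_{\beta,r}\cup(-1)^{m}S_{-\beta,r})$, and $S_{j}$ is by definition a set of orbit representatives. The joint translation invariance of the integrand then lets us trade the fundamental strip $\{0\le u<h,\,v>M_{\varepsilon}\}$ together with the full sum over $w'$ for the entire horizontal line $\mathbb{R}$ in $u$ together with only the sum $\sum_{w\in S_{j}}$; this yields the $\int_{M_{\varepsilon}}^{\infty}\!\int_{-\infty}^{\infty}du\,dv$ in the displayed formula. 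Since $e^{-sv}$ gives rapid decay for $\Re s>0$ and the kernel decays in $v$ by Lemma \ref{Bm1/T2}, the cutoff $M_{\varepsilon}$ can be replaced by an arbitrary $M>0$ without affecting the constant term at $s=0$ (the difference is an integral over a strip of bounded height, which depends entirely on $s$).

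The main obstacle is the bookkeeping of the automorphy factors under the change of variable $z\mapsto A_{\kappa_{j}}^{-1}z$: because $A_{\kappa_{j}}$ generally does not lie in $\Gamma_{L}$, neither $\Phi_{m,r,\beta}^{L}$ nor $g$ is invariant, and one must verify that the explicit powers of $j(A_{\kappa_{j}}^{-1},z)$ coming from $\Phi$ (of weight $2m$), from $\overline{g}$ (of weight $2m+2$), from the Jacobian of $dz\,d\overline{z}$, from $v^{2m}$, and from the factor $1/j(A_{\kappa_{j}},z)^{2}$ in Proposition \ref{Stokescusp} all conspire to cancel exactly, leaving $g[A_{\kappa_{j}}^{-1}]_{2m+2}(z)$ as the only surviving $g$-factor. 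Once this is verified, extracting the constant $\frac{-4i|r|^{m/2}m!s}{(-8\pi i)^{m+1}}$ is a routine computation combining the $\frac{s}{8\pi}$ from Proposition \ref{Stokescusp}, the $|r|^{m/2}m!/(8\pi i)^{m}$ from Corollary \ref{b-=1lift} (with the sign flip $(8\pi i)^{m}\mapsto(-8\pi i)^{m}$ caused by the conjugation step), and the identity $i\,dz\,d\overline{z}=2\,du\,dv$.
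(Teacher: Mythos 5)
Your proposal is correct and follows essentially the same route as the paper: starting from Proposition \ref{Stokescusp}, conjugating the integrand as in the proof of Proposition \ref{unfw0}, changing variables by $A_{\kappa_{j}}$ while tracking the cancellation of automorphy factors, and then unfolding the sum over $w$ against the $\Gamma_{L,\kappa_{j}}$-translates of the strip to obtain the full horizontal integral over representatives in $S_{j}$, with the same constant emerging from $dz\,d\overline{z}=-2i\,du\,dv$. (Note only that in the paper $M$ is exactly $\ln(1/\varepsilon)/2\pi$, so your final remark about replacing the cutoff is not needed.)
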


Note that multiplying $A_{\kappa_{j}}$ by $\binom{a\ \ \ b\ \ }{0\ \ a^{-1}}$
from the left just replaces the variable $s$ by $a^{2}s$ (hence leaves the
constant term in question invariant), as one easily sees by a simple change of
variables.

\begin{proof}
The same argument as in the proof of Proposition \ref{unfw0} (but with leaving
the summation on $w$ rather than on $\gamma$) shows that the $j$th term from
Proposition \ref{Stokescusp} can be written as $\frac{2|r|^{m/2}m!s}{(-8\pi
i)^{m+1}}$ times
\[\sum_{w}\int_{\mathcal{F}\cap\mathcal{D}_{\varepsilon,\kappa_{j}}}\frac{(z-w)^
{m}(z-\overline{w})^{m}}{t^{m}\overline{j(A_{\kappa_{j}},z)}^{2}}B_{m}\big(\cosh
d(z,w)\big)g(z)e^{-s\Im A_{\kappa_{j}}z}dzd\overline{z},\] where $w$ is taken
from $S_{\beta,r}\cup(-1)^{m}S_{-\beta,r}$ as above. As in the proof of
Proposition \ref{unfw0}, we apply again a change of variable, but this time with
respect to $A_{\kappa_{j}}$. We write $g(z)$ as
$\frac{g[A_{\kappa_{j}}^{-1}]_{2m+2}(A_{\kappa_{j}}z)}{j(A_{\kappa_{j}},z)^{2m+2
}}$ and
\[\frac{(z-w)^{m}(z-\overline{w})^{m}}{t^{m}}=\frac{(A_{\kappa_{j}}z-A_{\kappa_{
j}}w)^{m}(A_{\kappa_{j}}z-A_{\kappa_{j}}\gamma\overline{w})^{m}}{(\Im
A_{\kappa_{j}}w)^{m}}j(A_{\kappa_{j}},z)^{2m},\] and using the invariance of the
hyperbolic distance and the formula for the derivatives in order to write the
latter sum as
\[\sum_{\tilde{w}}\int_{A_{\kappa_{j}}(\mathcal{F}\cap\mathcal{D}_{\varepsilon,
\kappa_{j}})}\frac{(z-\tilde{w})^{m}(z-\overline{\tilde{w}})^{m}}{\tilde{t}^{m}
}B_{m}\big(\cosh
d(z,\tilde{w})\big)g[A_{\kappa_{j}}^{-1}]_{2m+2}(z)e^{-sv}dzd\overline{z}.\]
Here $\tilde{w}=\tilde{\sigma}+i\tilde{t}=A_{\kappa_{j}}w$ runs over the set
$A_{\kappa_{j}}\big(S_{\beta,r}\cup(-1)^{m}S_{-\beta,r}\big)$. Now,
$A_{\kappa}(\mathcal{F}\cap\mathcal{D}_{\varepsilon,\kappa_{j}})$ is a strip of
width $h$ in $\big\{z\in\mathcal{H}\big|v>M\big\}$ for
$M=\frac{\ln(1/\varepsilon)}{2\pi}$, and the set of points $\tilde{w}$ consists
of orbits of the group $A_{\kappa}\Gamma_{L}A_{\kappa}^{-1}$. The latter group
contains, in particular, the $A_{\kappa}$-conjugate $T^{h}=\binom{1\ \ h}{0\ \
1}$ of the generator of $\Gamma_{L,\kappa}$. We thus sum only over
representatives for the action of $\Gamma_{L,\kappa}$ (note that $\tilde{t}$ is
independent of the choice of the representative), and using the powers of
$T^{h}$ we integrate over the full half-plane of $z\in\mathcal{H}$ with $v>M$.
We now write $w$ instead of $\tilde{w}$
and put the external coefficient back again. This completes the proof of the
proposition.
\end{proof}

\section{Contributions from the Cusps \label{Cuspeval}}

We wish to evaluate the contribution of each summand in Proposition
\ref{unfcusp} explicitly. In order to do this, we shall need the following
formulae:
\begin{lem}
$(i)$ Let a polynomial $Q$, a non-negative real number $\eta$, four distinct
complex, non-real numbers $\kappa$, $\lambda$, $\mu$, and $\nu$, and four
non-negative integers $a$, $b$, $c$, and $d$ be given. Assuming that the degree
of $Q$ does not exceed $a+b+c+d+2$, the integral
\[\int_{-\infty}^{\infty}\frac{Q(u)e^{-i\eta
u}du}{(u-\kappa)^{a+1}(u-\lambda)^{b+1}(u-\mu)^{c+1}(u-\nu)^{d+1}}\] equals
$-2\pi i$ times the sum of the residues of the integrand at the elements of
$\{\kappa,\lambda,\mu,\nu\}$ whose imaginary part is negative. $(ii)$ In the
notation of part $(i)$ we have that
\[\mathrm{Res}_{u=\kappa}\frac{Q(u)e^{-i\eta
u}du}{(u-\kappa)^{a+1}(u-\lambda)^{b+1}(u-\mu)^{c+1}(u-\nu)^{d+1}}\] equals
\[\sum_{p,q,r,k}\!\!\binom{b+p}{p}\!\binom{c+q}{q}\!\binom{d+r}{r}\frac{Q^{(k)}
(\kappa)(i\eta)^{a-p-q-r-k}e^{-i\eta\kappa}/(a-p-q-r-k)!}{(-1)^{a-k}
k!(\kappa-\lambda)^{b+p+1}(\kappa-\mu)^{c+q+1}(\kappa-\nu)^{d+r+1}}\!.\]
\label{residues}
\end{lem}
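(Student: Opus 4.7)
The plan is to establish part $(i)$ by a standard contour argument and part $(ii)$ by a direct application of the generalized Leibniz rule.

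For part $(i)$, I would close the integration contour with a large semicircle of radius $R$ in the lower half-plane. The key observation is that since $\eta\geq0$, for $u=\xi+i\psi$ with $\psi\leq0$ we have $|e^{-i\eta u}|=e^{\eta\psi}\leq1$, so the exponential causes no growth on this semicircle (and in fact decays away from the real axis when $\eta>0$). Meanwhile, the rational part of the integrand has numerator of degree at most $a+b+c+d+2$ and denominator of total degree $(a+1)+(b+1)+(c+1)+(d+1)=a+b+c+d+4$, so it is $O(|u|^{-2})$ at infinity. The contribution from the semicircle is therefore bounded by (length $\pi R$) times $O(R^{-2})=O(R^{-1})\to0$. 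By the residue theorem, the closed contour integral equals $2\pi i$ times the sum of the enclosed residues; since our semicircle traverses the contour clockwise, the integral over $\mathbb{R}$ equals $-2\pi i$ times the sum of residues at poles with negative imaginary part, as claimed.

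For part $(ii)$, the residue at the pole of order $a+1$ at $u=\kappa$ is $\frac{1}{a!}$ times the $a$-th derivative at $u=\kappa$ of
\[H(u)=Q(u)\cdot(u-\lambda)^{-(b+1)}\cdot(u-\mu)^{-(c+1)}\cdot(u-\nu)^{-(d+1)}\cdot e^{-i\eta u}.\]
I would apply the generalized Leibniz rule for a product of five functions, summing over $(k,p,q,r,j)$ with $k+p+q+r+j=a$ and multinomial coefficient $\frac{a!}{k!p!q!r!j!}$. The individual derivatives evaluate to $Q^{(k)}(\kappa)$, to $(-1)^{p}\frac{(b+p)!}{b!}(\kappa-\lambda)^{-(b+p+1)}$ (and analogues for $\mu,\nu$), and to $(-i\eta)^{j}e^{-i\eta\kappa}$. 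Substituting $j=a-k-p-q-r$, rewriting the factorial ratios as the binomial coefficients $\binom{b+p}{p}$, $\binom{c+q}{q}$, $\binom{d+r}{r}$, and collecting signs via
\[(-1)^{p+q+r}\cdot(-i\eta)^{a-k-p-q-r}=(-1)^{a-k}(i\eta)^{a-k-p-q-r},\]
yields exactly the stated formula (the $(-1)^{a-k}$ in the denominator of the statement being the same as a factor of $(-1)^{a-k}$ in the numerator).

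Neither part presents a genuine obstacle; the main thing to be careful about is the sign bookkeeping in part $(ii)$, and verifying that the degree hypothesis $\deg Q\leq a+b+c+d+2$ in part $(i)$ is exactly what is needed to make both the original integral absolutely convergent and the semicircle contribution vanishing. Everything else is routine residue calculus.
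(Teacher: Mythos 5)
Your proof is correct and follows essentially the same route as the paper's: closing the contour with a clockwise semicircle in the lower half-plane (using $\eta\geq0$ to control the exponential and the degree hypothesis to kill the arc contribution) for part $(i)$, and the multinomial Leibniz rule applied to the $a$-th derivative of the holomorphic cofactor for part $(ii)$. The sign bookkeeping you describe, with $(-1)^{p+q+r}(-i\eta)^{a-k-p-q-r}=(-1)^{a-k}(i\eta)^{a-k-p-q-r}$, is exactly right and matches the stated formula.
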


\begin{proof}
We take the integral from part $(i)$ on the interval $[-R,R]$ for a very large
$R$, and complete it to an integral over a closed path by adding the integral
over the lower part of a circle of radius $R$ centered at 0. As this closed
path is negatively oriented, the closed integral gives the asserted value
(independently of $R$ for $R$ large enough). By taking the limit $R\to\infty$,
we get the integral in question, so that it remains to show that the integral
over the half-circle tends to 0 when $R\to\infty$. But as $\eta\geq0$ we have
$|e^{-i\eta u}|\leq1$ there, where for large enough $R$ the denominator is at
least $CR^{a+b+c+d+4}$ for some constant $C$. In addition, we have
$|Q(u)|<DR^{a+b+c+d+2}$ with another constant $D$ by our assumption on the
degree of $Q$, and the length of the path is $\pi R$. Hence the absolute value
of the half-circular integral is bounded by $\frac{\pi D}{CR}$, which tends to 0
as $R\to\infty$, as desired. This proves part $(i)$. We now note that the
function whose residue we are looking for in part $(ii)$ is of the form
$\frac{h(u)}{(u-\kappa)^{a+1}}$ where $h$ is holomorphic at $\kappa$. Hence this
residue is the $a$th derivative of $h$, divided by $a!$. Using the Multinomial
Theorem for derivatives (Leibnitz rule for higher order derivatives) and
evaluating the derivatives of $\frac{1}{(u-\lambda)^{b+1}}$,
$\frac{1}{(u-\mu)^{c+1}}$, $\frac{1}{(u-\nu)^{d+1}}$, $e^{-i\eta u}$, and $Q(u)$
yields the desired result.
This proves the proposition.
\end{proof}

\smallskip

The function $g[A_{\kappa_{j}}^{-1}]_{2m+2}$ is $h$-periodic and has at most a
pole at the cusp $A_{\kappa_{j}}\kappa_{j}=\infty$. Its Fourier expansion is
thus of the form $\sum_{n\gg-\infty}a_{n}(\kappa_{j})\mathbf{e}(nz/h)$. Plugging
this expansion into the expression from Proposition \ref{unfcusp} shows that we
have to examine integrals of the form
\[\int_{M}^{\infty}\int_{-\infty}^{\infty}\frac{(z-w)^{m}(z-\overline{w})^{m}}{
t^{m}}B_{m}\big(\cosh d(z,w)\big)e^{2\pi inz/h}du \cdot e^{-sv}dv.\] For these
integrals we shall use
\begin{prop}
For every non-negative integer $n$, the expression
\[\int_{-\infty}^{\infty}\frac{(z-w)^{m}(z-\overline{w})^{m}}{t^{m}}B_{m}
\big(\cosh d(z,w)\big)e^{-2\pi inz/h}du\] can be written, for fixed large $v$,
as $\frac{C_{n}}{v}$ plus some term of growth order
$O\big(\frac{1}{v^{2}}\big)$, where $C_{n}$ is a constant. \label{inteval}
\end{prop}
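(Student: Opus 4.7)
The plan is to evaluate the $u$-integral via the residue calculus of Lemma \ref{residues}, and then extract the $v\to\infty$ asymptotics term by term. First I would substitute the explicit formula of Lemma \ref{singB}, which writes $B_{m}(\cosh d(z,w))$ as a sum of $m$ rational function terms in $T=\cosh d(z,w)$ plus a logarithmic term. Via Equation \eqref{coshd} both $T$ and $T^{2}-1=\frac{((u-\sigma)^{2}+(v-t)^{2})((u-\sigma)^{2}+(v+t)^{2})}{(2vt)^{2}}$ are polynomial in $u$, and each rational term, once multiplied by the numerator $(z-w)^{m}(z-\overline{w})^{m}/t^{m}=((u-\sigma)+i(v-t))^{m}((u-\sigma)+i(v+t))^{m}/t^{m}$, becomes a rational function of $u$ whose poles, at $u=\sigma+i(v\pm t)$, lie entirely in the upper half-plane: the zeros of the polynomial numerator cancel the potential poles at $u=\sigma-i(v\pm t)$.

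For $n\geq 0$, the factor $e^{-2\pi inu/h}$ appearing in $e^{-2\pi inz/h}=e^{2\pi nv/h}\cdot e^{-2\pi inu/h}$ decays in the lower half-plane (and is identically $1$ when $n=0$, in which case I would rely on the polynomial decay at infinity obtained by taking all $m$ rational summands together and exploiting the cancellation built into Lemma \ref{singB}). Closing the contour in the lower half-plane through Lemma \ref{residues}$(i)$ then contributes nothing from the rational part, since no poles lie there.

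The entire remaining contribution thus comes from the logarithmic piece, which I would write as $\ln((u-\sigma)^{2}+(v+t)^{2})-\ln((u-\sigma)^{2}+(v-t)^{2})=\int_{v-t}^{v+t}\frac{2\alpha\,d\alpha}{(u-\sigma)^{2}+\alpha^{2}}$. Exchanging the order of integration (absolute convergence follows once the polynomial and exponential factors are all included), the inner $u$-integral at fixed $\alpha\in[v-t,v+t]$ becomes a rational integral to which Lemma \ref{residues}$(i)$ and $(ii)$ apply directly. Its single pole in the lower half-plane, at $u=\sigma-i\alpha$, yields a residue that is explicit: a polynomial in $\alpha$ of degree $2m$ (with coefficients depending on $v,t,\sigma,n$) multiplied by $e^{-2\pi n\alpha/h}$.

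The proposition thereby reduces to the asymptotics of the resulting one-dimensional integral $\int_{v-t}^{v+t}P(\alpha)\,e^{-2\pi n\alpha/h}\,d\alpha$ as $v\to\infty$. I would substitute $\alpha=v+\beta$ with $\beta\in[-t,t]$, factor the exponential $e^{\mp 2\pi nv/h}$ out of the integrand (which combines against the $e^{2\pi nv/h}$ from the original $e^{-2\pi inz/h}$), and expand the polynomial $P(v+\beta)$ simultaneously in $\beta$ and in $v^{-1}$. Collecting the coefficient of $v^{-1}$ identifies $C_{n}$, while the bounded range of $\beta$ forces the remainder to be uniformly of order $v^{-2}$. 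The main obstacle, and where the bulk of the calculation lies, will be the careful bookkeeping of the explicit polynomial $P$ and, in particular, verifying the cancellation of the putative order-$v^{0}$ term, so that the leading contribution is genuinely of order $1/v$ as claimed.
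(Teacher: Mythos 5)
There is a genuine gap, and it is fatal to the approach as written: the term-by-term decomposition of $B_{m}$ via Lemma \ref{singB} destroys the convergence of the $u$-integral. The original integrand decays like $u^{-2}$ (since $B_{m}(T)=O(T^{-m-1})$ and $T\sim\frac{u^{2}}{2vt}$, against a polynomial prefactor of degree $2m$), but the individual summands of Lemma \ref{singB} only decay like $\frac{1}{T}$; after multiplication by $(z-w)^{m}(z-\overline{w})^{m}/t^{m}$, the term with index $h$ behaves like $u^{4h+2-2m}$ as $u\to\pm\infty$, and the logarithmic term like $u^{2m-2}$. For $h\geq\frac{m-1}{2}$ (and for the logarithm, whenever $m\geq2$) these pieces \emph{grow} polynomially, so their $u$-integrals diverge, the degree hypothesis of Lemma \ref{residues}$(i)$ (numerator degree at most denominator degree minus $2$) is violated, the semicircular contribution does not vanish, and the Fubini exchange for the logarithmic piece is applied to a non-absolutely-convergent double integral --- indeed the inner $u$-integral at fixed $\alpha$ has numerator degree $2m$ against denominator degree $2$. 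The cancellation that restores integrability takes place \emph{between} the rational summands and the logarithmic one, so it cannot be invoked ``from the rational part alone,'' and exploiting it rigorously (say by truncating to $[-R,R]$ and tracking the cancelling boundary contributions) would amount to redoing the computation from scratch.

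The paper sidesteps all of this with one move you are missing: it first integrates by parts in $u$, writing $(z-w)^{m}(z-\overline{w})^{m}e^{-2\pi inz/h}$ as $\partial_{u}\big(\widetilde{Q}_{n}(z)e^{-2\pi inz/h}\big)$ with $\deg\widetilde{Q}_{n}\leq2m+1$. The boundary terms vanish because $B_{m}(\cosh d(z,w))=O(u^{-2m-2})$, and the derivative $\partial_{u}B_{m}(\cosh d(z,w))=\frac{-2\,\partial_{u}\cosh d(z,w)}{(\cosh^{2}d(z,w)-1)^{m+1}}$ is a single \emph{rational} function decaying like $u^{-4m-3}$. The resulting integrand decays like $u^{-2m-2}$, lies squarely within the hypotheses of Lemma \ref{residues}, and its residues at the two lower-half-plane poles $w-iv$ and $\overline{w}-iv$ give the $\frac{C_{n}}{v}+O(v^{-2})$ asymptotics directly. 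Your final step (extracting $C_{n}$ from a one-dimensional $\alpha$-integral) is plausible in spirit, but everything leading up to it rests on divergent integrals, so the argument does not stand without the integration by parts (or some equivalent regrouping) first.
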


\begin{proof}
The function $(z-w)^{m}(z-\overline{w})^{m}e^{-2\pi inz/h}$ is the derivative of
a function of the sort $\widetilde{Q}_{n}(z)e^{-2\pi inz/h}$, where
$\widetilde{Q}_{n}$ is a polynomial whose degree is $2m+1$ if $n=0$ and $2m$
otherwise. As this function is holomorphic, its derivative with respect to $u$
coincides with its derivative with respect to $z$. We thus integrate by parts to
get that our integral equals \[\frac{\widetilde{Q}_{n}(z)B_{m}\big(\cosh
d(z,w)\big)}{t^{m}e^{2\pi
inz/h}}\bigg|_{u=-\infty}^{u=\infty}-\int_{-\infty}^{\infty}\frac{\widetilde{Q}_
{n}(z)}{t^{m}}e^{-2\pi inz/h}\partial_{u}B_{m}\big(\cosh d(z,w)\big)du.\] Now,
part $(ii)$ of Lemma \ref{Bm1/T2} and Equation \eqref{coshd} show that
$B_{m}\big(\cosh d(z,w)\big)$ decays as
$O\Big(\frac{(2tv)^{m+1}}{(|z|^{2}-2\sigma
u+|w|^{2})^{m+1}}\Big)=O\big(\frac{1}{u^{2m+2}}\big)$ as $u\to\infty$. As the
degree of $\widetilde{Q}_{n}$ is smaller than $2m+2$ and $|e^{-2\pi
inz/h}|=e^{2\pi nv/h}$ is independent of $u$, the first term in the latter
equation vanishes. Using part $(i)$ of Lemma \ref{Bm1/T2} and Equation
\eqref{coshd} for evaluating the expression involving $\partial_{u}B_{m}$ we
find that the expression which we must evaluate is
\[2^{2m+2}v^{2m+1}t^{m+1}\int_{-\infty}^{\infty}\frac{\widetilde{Q}_{n}(z)e^{
-2\pi inz/h}(2u-2\sigma)}{|z-w|^{2m+2}|z-\overline{w}|^{2m+2}}du.\] We decompose
$2u-2\sigma$ as $z-w+z-\overline{w}$ and write, for fixed $v$,
$Q_{n}(u)=\widetilde{Q}_{n}(u+iv)$. By taking out $e^{2\pi nv/h}$ from the
exponent as well, we then get the constant $2^{2m+2}v^{2m+1}t^{m+1}e^{2\pi
nv/h}$ times the sum of two integrals of the form
\[\int_{-\infty}^{\infty}\frac{Q_{n}(u)e^{-2\pi
inu/h}du}{(u+iv-w)^{m+\varepsilon}(u-iv-\overline{w})^{m+1}(u+iv-\overline{w})^{
m+\delta}(u-iv-w)^{m+1}}du,\] once with $\varepsilon=1$ and $\delta=0$, and once
the other way around. This is an integral of the form appearing in part $(i)$ of
Lemma \ref{residues}, with the relevant points in the lower half plane being
$w-iv$ and $\overline{w}-iv$. Applying part $(ii)$ of that lemma with
$\kappa=w-iv$, $\lambda=\overline{w}-iv$, $\mu=w+iv$, $\nu=\overline{w}+iv$,
$\eta=\frac{2\pi n}{h}$, and the integers $a=m+\varepsilon-1$, $b=m+\delta-1$,
and $c=d=m$, we find that the term corresponding to $p$, $q$, $r$, and $k$ is
some combinatorial coefficient times \[\frac{(-1)^{a-k}Q_{n}^{(k)}(w-iv)(2\pi
in/h)^{a-p-q-r-k}e^{-2\pi
in(w-iv)/h}}{(2it)^{b+p+1}(-2iv)^{m+q+1}\big(-2i(v-t)\big)^{m+r+1}}.\]
Interchanging the roles of $\kappa$ and $\lambda$, of $\mu$ and $\nu$, and of
$a$ and $b$ yields the same expression, but with the derivatives of $Q_{n}$
evaluated at $\overline{w}+iv$, with $2it$ replaced by $-2it$, and with $v-t$
replaced by $v+t$.

We investigate the dependence of the resulting expression, multiplied by the
coefficient $2^{2m+2}v^{2m+1}t^{m+1}e^{2\pi nv/h}$, on $v$. First, the exponent
$e^{2\pi nv/h}$ cancels with $e^{-2\pi in\cdot-iv/h}$ from the residues. Second,
as $Q_{n}(\xi)$ is $\widetilde{Q}_{n}(\xi+iv)$, the numerators involve just the
values of $\widetilde{Q}_{n}$ and its derivatives at $w$ and at $\overline{w}$,
which are are independent of $v$. All the terms in which $q+r>0$ have, when
multiplied by $v^{2m+1}$, growth order of at most $O\big(\frac{1}{v^{2}}\big)$.
Moreover, the terms with $q+r=0$ yield some constant $C_{n}$ (depending on $w$,
but not on $v$) times $\frac{v^{m}}{(v \pm
t)^{m+1}}=\frac{1}{v}+O\big(\frac{1}{v^{2}}\big)$. Combining this information
completes the proof of the proposition.
\end{proof}

\smallskip

We can now prove the main result concerning cusps. It is given in
\begin{thm}
The pairing $\big\langle g,\frac{1}{2\pi
i}\delta_{2m}\Phi_{m,r,\beta}^{L}\big\rangle^{reg}$ does not get any
contribution from the regularized integrals at the cusps. \label{cusp0}
\end{thm}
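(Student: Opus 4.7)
The plan is to show that each summand in the additional cusp contribution of Proposition \ref{unfcusp}, after multiplication by the explicit prefactor involving $s$, has vanishing constant term at $s=0$. Since each $S_j$ is finite and there are finitely many cusps $\kappa_j$, it suffices to analyze a single pair $(j,w)$.

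First, I would expand $g[A_{\kappa_j}^{-1}]_{2m+2}(z)=\sum_{n\geq n_0}a_n(\kappa_j)\mathbf{e}(nz/h)$ at the cusp, where $n_0\leq 0$ is finite because $g$ is meromorphic. For $n>0$ the factor $|\mathbf{e}(nz/h)|=e^{-2\pi nv/h}$ provides exponential decay in $v$, so the double integral over $(u,v)\in\mathbb{R}\times[M,\infty)$ converges absolutely for $\Re s>-2\pi n/h$, in particular in a neighborhood of $s=0$. It therefore defines a function of $s$ holomorphic at $0$, whose product with $s$ contributes nothing to the constant term. The sum of these terms is absolutely convergent and remains holomorphic at $s=0$ by the uniform exponential decay.

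For each of the finitely many $n$ with $n_0\leq n\leq 0$, I rewrite $\mathbf{e}(nz/h)=e^{-2\pi i(-n)z/h}$ with $-n\geq 0$ and apply Proposition \ref{inteval} to the inner $u$-integral at fixed large $v$. This gives an expansion $C_n/v+R_n(v)$ with $R_n(v)=O(1/v^2)$ as $v\to\infty$, where $C_n$ is independent of $v$. The remainder $\int_M^\infty R_n(v)e^{-sv}\,dv$ is holomorphic at $s=0$ by dominated convergence, using $\int_M^\infty dv/v^2<\infty$, so after multiplication by $s$ it contributes nothing to the constant term. The leading term gives $C_n\int_M^\infty e^{-sv}v^{-1}dv=C_nE_1(Ms)$, and the exponential integral admits the expansion $E_1(Ms)=-\gamma-\ln(Ms)+O(s)$ as $s\to 0^+$. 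Multiplying by $s$ yields $-s\gamma-s\ln(Ms)+O(s^2)$, whose coefficient of $s^0$ vanishes, since $s\ln s\to 0$.

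Combining all $n$ by linearity and summing over the finitely many $(j,w)$ then produces zero total contribution from the regularized integrals at the cusps. The main obstacle is the delicate handling of the $\ln s$ behavior arising from the $C_n/v$ part: one must argue either by extending the notion of constant term to expansions involving $\ln s$, or by observing that the external factor of $s$ always suppresses such logarithmic divergences at $s=0$. A secondary technicality is justifying the interchange of the Fourier summation with the $(u,v)$ integration, which follows from exponential decay for $n>0$ and finiteness of the sum for $n\leq 0$, so the whole argument reduces to the term-by-term analysis above.
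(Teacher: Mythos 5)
Your proposal is correct and follows essentially the same route as the paper: the same Fourier decomposition at the cusp, the same appeal to Proposition \ref{inteval} to reduce to the $C_{n}/v$ term plus an $O(1/v^{2})$ remainder, and the same dismissal of all absolutely convergent pieces. The only difference is the final step, where the paper evaluates $\int_{M}^{\infty}\frac{se^{-vs}}{v}dv$ by integration by parts to exhibit the exact cancellation $\frac{1}{M}-\frac{1}{M}=0$ (thereby avoiding any explicit $\ln s$ term), whereas you invoke the asymptotics of the exponential integral $E_{1}(Ms)$ and correctly observe that the external factor of $s$ kills the logarithmic singularity.
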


\begin{proof}
We have to prove that the expression from Proposition \ref{unfcusp} vanishes. It
suffices to show that each summand vanishes. Fixing a cusp $\kappa$ and an
element $w\in\mathcal{H}$, we expand $g[A_{\kappa}^{-1}]_{2m+2}(z)$ as
$\sum_{n\gg-\infty}a_{n}(\kappa)\mathbf{e}(nz/h)$ as above. The analysis of
$B_{m}\big(\cosh d(z,w)\big)$ appearing in the proof of Proposition
\ref{inteval} shows that the integral over $u$ converges absolutely for every
$v$, and as the non-principal part of $g$ decays exponentially with $v$, we find
that the integral involving just the part
$\sum_{n=1}^{\infty}a_{n}(\kappa)\mathbf{e}(nz/h)$ of
$g[A_{\kappa}^{-1}]_{2m+2}(z)$ converges absolutely for $s=0$. As we multiply by
$s$ and take the constant term at $s=0$, this part contributes nothing to the
expression in question. As for the (finitely many) other terms, a similar
argument shows that for large enough $\Re s$ the total integral converges
absolutely, hence we may evaluate it in any order we find convenient. We carry
out the integral with respect to $u$ first. By Proposition \ref{inteval} we get
an expression of the sort \[\frac{-4i|r|^{m/2}m!s}{(-8\pi
i)^{m+1}}\int_{M}^{\infty}\bigg(\frac{\sum_{n\geq0}a_{-n}(\kappa)C_{n}}{v}
+\Lambda(v)\bigg)e^{-sv}dv\] (this is a finite sum, since only finitely many
coefficients $a_{-n}(\kappa)$ may not vanish), where $\Lambda$ is a smooth
function of $v$ satisfying $\Lambda(v)=O\big(\frac{1}{v^{2}}\big)$. The integral
involving $\Lambda$ converges also for $s=0$, hence does not contribute to the
final result by the same argument from above. The remaining term is some
constant times the constant term at $s=0$ of the expression
\[\int_{M}^{\infty}\frac{se^{-vs}}{v}dv,\quad\mathrm{which\
equals}\quad\frac{-e^{-vs}}{v}\bigg|_{M}^{\infty}-\int_{M}^{\infty}\frac{e^{-vs}
}{v^{2}}dv=\frac{e^{-Ms}}{M}-\int_{M}^{\infty} \frac{e^{-vs}}{v^{2}}dv\] by
integration by parts. As the latter integral converges also for $s=0$, we may
just substitute this value and obtain
$\frac{1}{M}-\int_{M}^{\infty}\frac{dv}{v^{2}}=0$. Hence the remaining term of
the integral in question also vanishes, which completes the proof of the
theorem.
\end{proof}

\section{Contributions of Poles \label{PolEval}}

It remains to evaluate the contribution of each pole $\tilde{w} \neq w_{0}$ of
$g$, which we write again as $w=\sigma+it$, to the pairing $\big\langle
g,\frac{1}{2\pi i}\delta_{2m}\Phi_{m,r,\beta}^{L}\big\rangle^{reg}$ given in the
form appearing in Proposition \ref{unfw0}. For this we first consider the
function $B_{m}\big(\cosh d(z,w_{0})\big)$ around $z=w \neq w_{0}$. More
precisely, we substitute $z=A_{w}^{-1}(\zeta)$ for $\zeta\in\mathcal{B}_{1}$, so that $\cosh d(z,w_{0})$ takes, by Lemma \ref{b-=1neg} and part $(iv)$ of Lemma \ref{Awprop}, the form
\[1+\frac{|w-w_{0}-(\overline{w}-w_{0})\zeta|^{2}}{2t_{0}t(1-|\zeta|^{2})},\]
and using this we obtain a Taylor expansion of the sort \[B_{m}\big(\cosh
d(z,w_{0})\big)=\sum_{p=0}^{\infty}\sum_{q=0}^{\infty}\alpha_{p,q}^{(m)}(w,w_{0}
)\zeta^{p}\overline{\zeta}^{q}.\] This expansion converges on some ball
$\mathcal{B}_{\delta}$ of positive radius $\delta$ (in fact, we can take
$\delta=|A_{w}(w_{0})|$). However, in order to avoid convergence issues below we
shall fix some $d\geq0$ and take the sum only on $p+q \leq d$, knowing that the
remainder, which we denote $B_{m}^{d}(w,w_{0},\zeta)$, is of growth order
$O\big(|\zeta|^{d+1}\big)$ as $\zeta\to0$.

In addition, we expand $g(z)$ as in Equation \eqref{expwzeta} once again.
Multiplying the expression $(1-\zeta)^{2m}$ appearing there by
$(z-w_{0})^{m}(z-\overline{w}_{0})^{m}$ yields the $m$th power of
\begin{equation}
\psi(w,w_{0},\zeta)=\big(w-w_{0}-(\overline{w}-w_{0})\zeta\big)\big(w-\overline{w}_{0}-(\overline{w}-\overline{w}_{0})\zeta\big). \label{psidef}
\end{equation}
We define, for $n\in\mathbb{Z}$, the function $c_{n}^{(m)}(w,w_{0})$ according
to the Laurent expansion
\[\sum_{n\gg-\infty}c_{n}^{(m)}(w,w_{0})\zeta^{n}=|r|^{\frac{m}{2}}\frac{\psi(w,w_{0},\zeta)^{m}}{(2it_{0}t)^{m}}\sum_{n\gg-\infty}a_{n}(w)\zeta^{n}.\] The examination of the contribution of the pole of $g$ at $w \neq w_{0}$ now begins with
\begin{prop}
If $\tilde{w}=w \neq w_{0}$ is a pole of $g$ of order $d$ then the integral over
$\mathcal{D}_{\varepsilon,w}$ appearing in Proposition \ref{unfw0} defines a
holomorphic function of $s$ with $\Re s>d-1$. Multiplying by the coefficient
from that proposition, we obtain a function admitting an analytic continuation
to the point $s=0$, where it attains the value \[\frac{m!}{2i(-8\pi
i)^{m}|\Gamma_{L,w_{0}}|}\sum_{p}c_{-1-p}^{(m)}(w,w_{0})\alpha_{p,0}^{(m)}(w,w_{
0}).\] \label{contwneqw0}
\end{prop}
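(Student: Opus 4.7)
My plan is to substitute $\zeta=A_{w}(z)$ inside the integral appearing in Proposition \ref{unfw0} (with $\tilde w=w$), rewrite every factor via Lemma \ref{Awprop}, and simplify. The key computations are
\[(z-w_{0})(z-\overline{w_{0}})=\frac{\psi(w,w_{0},\zeta)}{(1-\zeta)^{2}},\qquad (\overline{z}-w)(\overline{z}-\overline{w})=\frac{-4t^{2}\overline{\zeta}}{(1-\overline{\zeta})^{2}},\]
together with the Laurent expansion of $g$ from Equation \eqref{expwzeta}. After a large cancellation of the $(1-\zeta)$-powers coming from $dz\,d\overline{z}$, from $g$, and from the various denominators, and of the $t$- and $t_{0}$-powers, the very definition of $c_{n}^{(m)}(w,w_{0})$ absorbs the factor $\psi(w,w_{0},\zeta)^{m}$ into $\sum_{n}a_{n}(w)\zeta^{n}$, reducing the integrand to a constant times
\[\Big(\sum_{n}c_{n}^{(m)}(w,w_{0})\zeta^{n}\Big)\cdot B_{m}\big(\cosh d(z,w_{0})\big)\cdot\frac{|\zeta|^{s}}{\overline{\zeta}}\,d\zeta\,d\overline{\zeta}.\]

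Next I insert the truncated expansion $B_{m}(\cosh d(z,w_{0}))=\sum_{p+q\le d}\alpha_{p,q}^{(m)}(w,w_{0})\zeta^{p}\overline{\zeta}^{q}+B_{m}^{d}(w,w_{0},\zeta)$ and pass to polar coordinates $\zeta=\rho e^{i\varphi}$, under which $\frac{|\zeta|^{s}}{\overline{\zeta}}d\zeta\,d\overline{\zeta}=-2i\rho^{s}e^{i\varphi}d\rho\,d\varphi$. The unintegrated integrand carries a leading singularity of order $|\zeta|^{s-d-1}$ at $\zeta=0$, which is absolutely integrable precisely for $\Re s>d-1$; this establishes the holomorphy claim and justifies Fubini in this half-plane. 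The $\varphi$-integration over $[0,2\pi]$ then kills every term but those with $q=n+p+1$, and the radial integrals $\int_{0}^{\varepsilon}\rho^{2n+2p+1+s}d\rho=\frac{\varepsilon^{2n+2p+2+s}}{2n+2p+2+s}$ exhibit the meromorphic continuation with at most simple poles at the points $s=-2(n+p+1)$.

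Multiplying by the external factor $\frac{-2|r|^{m/2}m!\,s}{(-8\pi i)^{m+1}|\Gamma_{L,w_{0}}|}$ of Proposition \ref{unfw0} and taking the constant term at $s=0$ then retains only summands whose denominator vanishes there, namely those with $n+p+1=0$. In the main sum this forces $p=-n-1$ and $q=0$, which is compatible with $p+q\le d$ exactly for $0\le p\le d-1$; in the remainder contribution the condition $p+q\ge d+1$ combined with $q=n+p+1$ gives $n+p+1\ge 1>0$ after a quick case split on $p\le d$ versus $p>d$, so that the $B_{m}^{d}$-term is holomorphic at $s=0$ and is annihilated by the external $s$-factor. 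Substituting $n=-1-p$, evaluating $\varepsilon^{s}$ at $s=0$, and simplifying the constants via $\frac{-4\pi}{(-8\pi i)^{m+1}}=\frac{1}{2i(-8\pi i)^{m}}$ yields the asserted expression, where the sum over $p$ is effectively finite because $c_{-1-p}^{(m)}(w,w_{0})=0$ for $p\ge d$.

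The main point of care will be the bookkeeping that ties together the Laurent order $d$ of $g$ at $w$, the truncation level $d$ of the Taylor expansion of $B_{m}$, and the range of indices $(n,p,q)$ that actually produce a nonzero $\varphi$-integral, and in particular the verification that the tail $B_{m}^{d}$ never contributes to the constant term at $s=0$.
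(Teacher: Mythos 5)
Your proposal is correct and follows essentially the same route as the paper: the substitution $\zeta=A_{w}(z)$ with Lemma \ref{Awprop}, absorption of $\psi(w,w_{0},\zeta)^{m}$ into the coefficients $c_{n}^{(m)}$, polar coordinates, the selection rule $q=n+p+1$ from the $\varphi$-integral, and the radial integrals $\varepsilon^{s+2q}/(s+2q)$ whose denominators vanish at $s=0$ only when $q=0$. The sole (immaterial) difference is that the paper disposes of the tail $B_{m}^{d}$ by the crude bound $B_{m}^{d}=O(|\zeta|^{d+1})$, which makes that part of the integral convergent at $s=0$ and hence killed by the external factor $s$, whereas you argue term by term on the tail of the Taylor series; both work.
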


\begin{proof}
As in the proof of Proposition \ref{contw0}, we expand $g(z)$ as in Equation
\eqref{expwzeta}, change the variable to $\zeta=A_{\tilde{w}}(z)$, and apply
Equation \eqref{coshd} and part $(iv)$ of Lemma \ref{Awprop}. The resulting
integral becomes, after cancelations,
\[\int_{\mathcal{B}_{\varepsilon}}\frac{\psi(w,w_{0},\zeta)^{m}}{-2i(2it_{0}t)^{
m}}\sum_{n}a_{n}(w)\zeta^{n+1}B_{m}\bigg(1+\frac{\big|w-w_{0}-(\overline{w}-w_{0
})\zeta\big|^{2}}{2t_{0}t(1-|\zeta|^{2})}\bigg)|\zeta|^{s-2}d\zeta
d\overline{\zeta}.\] As the term with $B_{m}$ is bounded on $\mathcal{B}_{\varepsilon}$ and $a_{n}(w)=0$ for $n<-d$, the integral indeed converges wherever $\Re s>d-1$, proving the first assertion. We now multiply by the coefficient $\frac{-2|r|^{m/2}m!s}{(-8\pi i)^{m+1}|\Gamma_{L,w_{0}}|}$ again, and plug in the definition of the Laurent series $\sum_{n}c_{n}^{(m)}(w,w_{0})\zeta^{n+1}$ (note the shift in the power of $\zeta$ appearing already in the last formula). In addition, we decompose the function $B_{m}$ as its Taylor polynomial of total degree $d$ plus the remainder $B_{m}^{d}(w,w_{0},\zeta)$. The estimate on $B_{m}^{d}(w,w_{0},\zeta)$ as $\zeta\to0$ shows that the term of the integrand involving this remainder is bounded on $\mathcal{B}_{\varepsilon}$ also for $s=0$. As we have $s$ in the external coefficient and we are interested in the constant term at $s=0$, this part of the integrand does not contribute to the final result. For the same reason we may also take the sum over $n$ to include just non-positive $n$.

It therefore remains to determine the constant term at $s=0$ of the analytic
continuation of \[\frac{-im!s}{(-8\pi
i)^{m+1}|\Gamma_{L,w_{0}}|}\int_{\mathcal{B}_{\varepsilon}}\sum_{n=-d}^{0}c_{n}^{(m)}(w,w_{0})\sum_{p+q  \leq d}\alpha_{p,q}^{(m)}(w,w_{0})\zeta^{n+1+p}\overline{\zeta}^{q}|\zeta|^{s-2}d\zeta d\overline{\zeta}.\] Writing $\zeta=\rho e^{i\varphi}$ and $d\zeta d\overline{\zeta}=-2i\rho d\rho d\varphi$ once again, this integral takes the
form \[\frac{-2m!s}{(-8\pi i)^{m+1}|\Gamma_{L,w_{0}}|}\!\int_{0}^{\varepsilon}\!\!\!\int_{0}^{2\pi}\!\!\!\sum_{n,p,q}\!\!c_{n}^{(m)}
(w,w_{0})\alpha_{p,q}^{(m)}(w,w_{0})\rho^{n+p+q+s}e^{i(n+1+p-q)\varphi}d\rho d\varphi.\] The integration with respect to $\varphi$ leaves only the terms with $q=n+1+p$, and after carrying out the integration with respect to $\rho$ as well (this is allowed if $\Re s>d-1$) we obtain \[\frac{m!s}{2i(-8\pi
i)^{m}|\Gamma_{L,w_{0}}|}\sum_{n,p}c_{n}^{(m)}(w,w_{0})\alpha_{p,n+1+p}^{(m)}(w,w_{0})\frac{\varepsilon^{s+2n+2p+2}}{s+2n+2p+2}.\] Substituting $s=0$ annihilates all the terms in which $n+p+1\neq0$. In the remaining terms, $s$ is canceled in the fraction, and the power of $\varepsilon$ becomes $\varepsilon^{0}=1$ after the substitution $s=0$. This completes the proof of the proposition.
\end{proof}

\smallskip

A deeper analysis of the coefficients $\alpha_{p,0}(w,w_{0})$ yields a more
succinct formula for the contribution of the pole at $w$. For this we prove
\begin{lem}
The function $\alpha_{p,0}^{(m)}(w,w_{0})$ equals just
\[\frac{(-1)^{p}B_{m}^{(p)}\big(\cosh
d(w,w_{0})\big)(\overline{w}-w_{0})^{p}(\overline{w}-\overline{w}_{0})^{p}}{
p!(2t_{0}t)^{p}},\] where $B_{m}^{(p)}$ is the $p$th derivative of $B_{m}$.
\label{alphap0exp}
\end{lem}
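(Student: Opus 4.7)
My plan is to compute $\alpha_{p,0}^{(m)}(w,w_0)$ directly from the definition, by setting $\bar\zeta=0$ in the expansion
\[
B_{m}\!\left(1+\frac{\big|w-w_{0}-(\overline{w}-w_{0})\zeta\big|^{2}}{2t_{0}t(1-|\zeta|^{2})}\right)=\sum_{p,q\ge 0}\alpha_{p,q}^{(m)}(w,w_{0})\,\zeta^{p}\overline{\zeta}^{q},
\]
so that $\alpha_{p,0}^{(m)}(w,w_0)$ is exactly the coefficient of $\zeta^{p}$ in the one-variable function obtained by restricting to $\overline{\zeta}=0$.

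The key miracle is that this restriction is very simple. Write the modulus squared as
\[
\big|w-w_{0}-(\overline{w}-w_{0})\zeta\big|^{2}=\big(w-w_{0}-(\overline{w}-w_{0})\zeta\big)\big(\overline{w}-\overline{w}_{0}-(w-\overline{w}_{0})\overline{\zeta}\big),
\]
and note that $1-|\zeta|^{2}=1-\zeta\overline{\zeta}$. Setting $\overline{\zeta}=0$ kills the second factor except for its constant term and trivializes the denominator, giving
\[
\cosh d(A_{w}^{-1}(\zeta),w_{0})\bigg|_{\overline{\zeta}=0}=1+\frac{(w-w_{0})(\overline{w}-\overline{w}_{0})}{2t_{0}t}-\frac{(\overline{w}-w_{0})(\overline{w}-\overline{w}_{0})}{2t_{0}t}\,\zeta.
\]
By Equation \eqref{coshd}, the constant term equals $\cosh d(w,w_{0})$, so the restricted argument is the affine function
\[
h(\zeta)=\cosh d(w,w_{0})-\frac{(\overline{w}-w_{0})(\overline{w}-\overline{w}_{0})}{2t_{0}t}\,\zeta.
\]

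Since $h$ is linear in $\zeta$, Taylor's theorem for the one-variable function $B_{m}$ (holomorphic in a neighborhood of $\cosh d(w,w_{0})>1$ by the remark after Lemma \ref{singB}) gives
\[
B_{m}(h(\zeta))=\sum_{p=0}^{\infty}\frac{B_{m}^{(p)}\!\big(\cosh d(w,w_{0})\big)}{p!}\left(-\frac{(\overline{w}-w_{0})(\overline{w}-\overline{w}_{0})}{2t_{0}t}\right)^{\!p}\zeta^{p},
\]
and reading off the coefficient of $\zeta^{p}$ yields the claimed formula.

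There is essentially no obstacle here beyond a careful bookkeeping of the factor $(-1)^{p}$ and the powers of $2t_{0}t$; the genuine work was already carried out when recording $\cosh d(z,w_{0})$ in the coordinate $\zeta$ via Lemma \ref{b-=1neg} and part $(iv)$ of Lemma \ref{Awprop}. The only subtlety worth flagging is the convergence of the original double Taylor series, which justifies extracting $\alpha_{p,0}^{(m)}$ by the substitution $\overline{\zeta}=0$; this is guaranteed on the disc $\mathcal{B}_{|A_{w}(w_{0})|}$ noted just before the lemma.
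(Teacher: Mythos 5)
Your proof is correct and is essentially the paper's own argument in a slightly different order: the paper Taylor-expands $B_{m}$ first and then discards every term containing $\overline{\zeta}$ (including those hidden in the geometric expansion of $\frac{1}{1-|\zeta|^{2}}$), whereas you set $\overline{\zeta}=0$ at the outset to get the affine argument $h(\zeta)$ and then expand; both hinge on the same observation that modulo $\overline{\zeta}$ the argument of $B_{m}$ is linear in $\zeta$ with coefficient $-\frac{(\overline{w}-w_{0})(\overline{w}-\overline{w}_{0})}{2t_{0}t}$. The bookkeeping of the sign and the powers of $2t_{0}t$ matches the asserted formula, so there is nothing to add.
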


\begin{proof}
The usual Taylor expansion gives us
\[B_{m}\big(\cosh
d(z,w_{0})\big)=\sum_{l=0}^{\infty}\frac{B_{m}^{(l)}\big(\cosh
d(w,w_{0})\big)}{l!}\cdot\big[\cosh d(z,w_{0})-\cosh d(w,w_{0})\big]^{l}.\] We
have seen in the proof of Proposition \ref{contwneqw0} that \[\cosh
d(z,w_{0})=1+\frac{\big|w-w_{0}-(\overline{w}-w_{0})\zeta\big|^{2}}{2t_{0}
t(1-|\zeta|^{2})}\] in terms of $\zeta$, and $\cosh d(w,w_{0})$ is the same
expression but with $\zeta=0$. We write $\frac{1}{1-|\zeta|^{2}}$ as
$\sum_{n=0}^{\infty}|\zeta|^{2n}$, and expanding the absolute value appearing in
the numerator we get a power series in $\zeta$ and $\overline{\zeta}$. Now, we
are interested only in the coefficients $\alpha_{p,0}^{(m)}$. Hence we may omit
all the terms involving $\overline{\zeta}$, in particular those which are
multiplied by some positive power of $|\zeta|^{2}$. This allows us to ignore the
denominator $1-|\zeta|^{2}$ in $\cosh d(z,w_{0})$. The difference from $\cosh
d(w,w_{0})$ then takes the form
\[\frac{|\overline{w}-w_{0}|^{2}|\zeta|^{2}-(w-w_{0})(w-\overline{w}_{0}
)\overline{\zeta}-(\overline{w}-w_{0})(\overline{w}-\overline{w}_{0})\zeta}{2t_{
0}t}.\] Once again, the first two terms do not contribute to any of the
coefficients $\alpha_{p,0}^{(m)}$, and the $p$th power of the remaining term
gives us the asserted value for $\alpha_{p,0}^{(m)}$. This proves the lemma.
\end{proof}

\smallskip

We are now in place to prove the final formula for the regularized pairing of
the meromorphic modular form $\frac{1}{2\pi i}\delta_{2m}\Phi_{m,r,\beta}^{L}$
from Theorem \ref{delta2mPhi} with any meromorphic modular form $g$ of weight
$2m+2$. To do this we define for two distinct points $w=\sigma+it$ and
$w_{0}=\sigma_{0}+it_{0}$ in $\mathcal{H}$ the radius
$\delta=\big|A_{w}(w_{0})\big|>0$, and given such a modular form $g$ we let
$\Psi_{g,w,w_{0}}^{(m)}:\mathcal{B}_{\delta}\to\mathbb{C}$ be the (meromorphic)
function in which $\Psi_{g,w,w_{0}}^{(m)}(\zeta)$ equals
\begin{equation}
g[A_{w}^{-1}]_{2m+2}(\zeta)\frac{\psi(w,w_{0},\zeta)^{m}}{(2it_{0}t)^{m}}B_{m}\bigg(\cosh
d(w,w_{0})-\frac{(\overline{w}-w_{0})(\overline{w}-\overline{w}_{0})}{2t_{0}t}\zeta\bigg),
\label{Psidef}
\end{equation}
where $\psi(w,w_{0},\zeta)$ is the expression defined in Equation \eqref{psidef} (recall that Lemma \ref{singB} extends $B_{m}$ to a holomorphic function of  $T\in\mathbb{C}$ with $\Re T>1$, and $\delta$ is the radius making sure that the argument of $B_{m}$ remains in this domain). Our final formula is now given in
\begin{thm}
Let $g$ be a meromorphic modular form of weight $2m+2$ with respect to $\Gamma$,
and let $\big\{w_{j}^{\pm}=\sigma_{j}^{\pm}+it_{j}^{\pm}\big\}_{j=1}^{l_{\pm}}$
be representatives for the $\Gamma$-orbits forming the set $S_{\pm\beta,r}$ defined before Corollary \ref{b-=1lift}. Then the regularized pairing $\big\langle g,\frac{1}{2\pi
i}\delta_{2m}\Phi_{m,r,\beta}^{L} \big\rangle^{reg}$ equals \[\frac{m!}{2i(-8\pi
i)^{m}}\sum_{j,\pm}\frac{(\pm)^{m}}{\big|\Gamma_{L,w_{j}^{\pm}}\big|}\sum_{g(w)=\infty,\ w \neq w_{j}^{\pm}}|r|^{\frac{m}{2}}\mathrm{Res}_{\zeta=0}\big(\Psi_{g,w,w_{j}^{\pm}}^{(m)}(\zeta)d\zeta\big).\] Here the inner sum is over the poles $w$ of $g$ (apart from $w_{j}^{\pm}$ in case it is also a pole), $\Psi_{g,w,w_{j}^{\pm}}^{(m)}$ is defined in Equation
\eqref{Psidef}, and the residue at $\zeta=0$ can also be written as the residue at $z=w$ of
\[g(z)\frac{(z-w_{j}^{\pm})^{m}(z-\overline{w}_{j}^{\pm})^{m}}{(t_{j}^{\pm})^{m}}B_{m}\bigg(\cosh
d(w,w_{0})-\frac{(\overline{w}-w_{0})(\overline{w}-\overline{w}_{0})}{2t_{0}t}A_{w}(z)\bigg)dz,\]
where $B_{m}$ is defined in Lemma \ref{Bm1/T2}, $A_{w}$ is defined in Section \ref{Expatw}, and
$d$ is the hyperbolic distance. \label{final}
\end{thm}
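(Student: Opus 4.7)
The theorem packages the intermediate results already established into a compact residue formula. The plan is as follows.

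First, Proposition \ref{unfw0} was proved under the assumption that $S_{-\beta,r}=\emptyset$ and $S_{\beta,r}$ consists of a single $\Gamma_L$-orbit; the general case follows by applying exactly the same unfolding argument to each orbit in $S_{\beta,r}\cup(-1)^m S_{-\beta,r}$ separately, with the $(\pm)^{m}$ sign in the final formula inherited from the $(-1)^{m}$ weighting of $S_{-\beta,r}$ in Corollary \ref{b-=1lift}. For each orbit representative $w_j^{\pm}$, Proposition \ref{unfw0} then decomposes the pairing into a sum over $\tilde{w}$ in the $\Gamma_L$-orbits of $w_j^{\pm}$ and of the poles of $g$ (together, possibly, with cusp contributions).

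Next, the terms indexed by $\tilde{w}$ lying in the orbit of $w_j^{\pm}$ vanish by Proposition \ref{contw0}, and since $g$ is a cusp form the cusp contributions vanish by Theorem \ref{cusp0}. Only the pole-of-$g$ contributions survive, and each one is evaluated by Proposition \ref{contwneqw0} as
\[
\frac{m!}{2i(-8\pi i)^{m}\bigl|\Gamma_{L,w_j^{\pm}}\bigr|}\sum_p c_{-1-p}^{(m)}(w,w_j^{\pm})\,\alpha_{p,0}^{(m)}(w,w_j^{\pm}).
\]
It remains only to identify this inner sum as a residue. By Lemma \ref{alphap0exp}, $\alpha_{p,0}^{(m)}(w,w_j^{\pm})$ is precisely the $p$th Taylor coefficient in $\zeta$ of $B_m\bigl(\cosh d(w,w_j^{\pm})-\frac{(\overline{w}-w_j^{\pm})(\overline{w}-\overline{w}_j^{\pm})}{2t_j^{\pm}t}\zeta\bigr)$ around $\zeta=0$, while by construction $c_n^{(m)}(w,w_j^{\pm})$ is the $n$th Laurent coefficient of $|r|^{m/2}\frac{\psi(w,w_j^{\pm},\zeta)^m}{(2it_j^{\pm}t)^m}g[A_w^{-1}]_{2m+2}(\zeta)$. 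The Cauchy product therefore yields
\[
\sum_p c_{-1-p}^{(m)}(w,w_j^{\pm})\,\alpha_{p,0}^{(m)}(w,w_j^{\pm})=|r|^{m/2}\,\mathrm{Res}_{\zeta=0}\bigl(\Psi_{g,w,w_j^{\pm}}^{(m)}(\zeta)\,d\zeta\bigr),
\]
and summing over $j$, $\pm$, and the poles $w\neq w_j^{\pm}$ of $g$ gives the first expression stated in the theorem.

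The second (equivalent) form of the residue follows from the invariance of residues under the holomorphic change of variable $z=A_w^{-1}(\zeta)$, which sends $\zeta=0$ to $z=w$. A direct calculation based on the identity $w-w_0-(\overline{w}-w_0)\zeta=(z-w_0)(2it)/(z-\overline{w})$ (and the analogous identity with $w_0$ replaced by $\overline{w}_0$), combined with part (ii) of Lemma \ref{Awprop} and the slash factor $j(A_w,z)=(z-\overline{w})/\sqrt{2it}$ from $g[A_w^{-1}]_{2m+2}$, shows that $\frac{\psi(w,w_j^{\pm},\zeta)^m}{(2it_j^{\pm}t)^m}g[A_w^{-1}]_{2m+2}(\zeta)\,d\zeta$ pulls back to $g(z)\frac{(z-w_j^{\pm})^m(z-\overline{w}_j^{\pm})^m}{(t_j^{\pm})^m}\,dz$, while the argument of $B_m$ simply becomes $\cosh d(w,w_j^{\pm})-\frac{(\overline{w}-w_j^{\pm})(\overline{w}-\overline{w}_j^{\pm})}{2t_j^{\pm}t}A_w(z)$. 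The main non-formal ingredient, namely the explicit Taylor expansion of $B_m(\cosh d(\cdot,w_0))$, has already been handled in Lemma \ref{alphap0exp}; the remaining obstacle is mostly the bookkeeping of constants, the $(\pm)^m$ signs, and the stabilizer orders $|\Gamma_{L,w_j^{\pm}}|$, together with the verification that the change of variables produces no spurious factors.
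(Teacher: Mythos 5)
Your proposal is correct and follows essentially the same route as the paper's proof: reduce to the single-orbit case of Proposition \ref{unfw0}, discard the $w_j^{\pm}$-contribution via Proposition \ref{contw0} and the cusp contribution via Theorem \ref{cusp0}, evaluate each remaining pole by Proposition \ref{contwneqw0}, and recognize $\sum_p c_{-1-p}^{(m)}\alpha_{p,0}^{(m)}$ as the residue of $\Psi_{g,w,w_j^{\pm}}^{(m)}$ via Lemma \ref{alphap0exp} and the Cauchy product, with the second form obtained by the change of variable $z=A_w^{-1}(\zeta)$. Your explicit verification of the pullback identity $w-w_0-(\overline{w}-w_0)\zeta=2it(z-w_0)/(z-\overline{w})$ is a correct (and slightly more detailed) account of what the paper summarizes as "the calculations we did in the proof of Proposition \ref{contwneqw0}."
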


\begin{proof}
We consider the contribution obtained from each representative $w_{j}^{\pm}$,
recalling that summands arising from $w_{j}^{-}$ come with the sign $(-1)^{m}$.
Proposition \ref{contw0} shows that when we consider the poles of $g$ we may
ignore the pole in $w_{j}^{\pm}$ itself, in case such a pole exists. Moreover,
Theorem \ref{cusp0} allows us to ignore the regularized integrals arising from
the cusps. It just remains to apply Proposition \ref{contwneqw0} with
$w_{0}=w_{j}^{\pm}$, and find that the contribution from each pole $w$ of $g$
is \[\frac{m!}{2i(-8\pi
i)^{m}}\sum_{j,\pm}\frac{(\pm)^{m}}{\big|\Gamma_{L,w_{j}^{\pm}}\big|}\sum_{p}c_{-1-p}^{(m)}(w,w_{j}^{\pm})\alpha_{p,0}^{(m)}(w,w_{j}^{\pm}).\] But Lemma \ref{alphap0exp} shows that the numbers $\alpha_{p,0}^{(m)}(w,w_{j}^{\pm})$ are the coefficients of the expansion of the (holomorphic) function involving $B_{m}$ around $\zeta=0$. Hence the sum over $p$ is just the $-1$st coefficient of the expansion of $\Psi_{g,w,w_{j}^{\pm}}^{(m)}$ around $\zeta=0$, which is the asserted residue. The usual change of variables $\zeta=A_{w}(z)$ and
$z=A_{w}^{-1}(\zeta)$, together with the calculations we did in the proof of
Proposition \ref{contwneqw0}, transform the residue of
$\Psi_{g,w,w_{j}^{\pm}}^{(m)}(\zeta)d\zeta$ at $\zeta=0$ to the asserted residue
at $z=w$. This completes the proof of the theorem.
\end{proof}

\section{Lattices for Integral Quadratic Forms \label{QuadForms}}

Let $N$ be a positive integer, and let $\beta$ be an element of
$\mathbb{Z}/2N\mathbb{Z}$. Consider the set of integral binary quadratic forms
$Q(X,Y)=AX^{2}+BXY+CY^{2}$, of discriminant $D=B^{2}-4AC$, such that $A$ is
positive and divisible by $N$, and $B$ lies in $\beta+2N\mathbb{Z}$. The group
$\Gamma_{0}(N)$ of matrices $\binom{a\ \ b}{c\ \ d} \in SL_{2}(\mathbb{Z})$ in
which $N|c$ preserves this set under the action in which
$\gamma(Q)(X,Y)=Q\big((X,Y)\binom{0\ \ 1}{1\ \ 0}\gamma\binom{0\ \ 1}{1\ \
0}\big)$. The following relation to lattices is well-known and easy to prove:
\begin{lem}
$(i)$ Identify the quadratic form $Q(X,Y)=AX^{2}+BXY+CY^{2}$ in which $N|A$ and
$2N|B$ with the matrix $\lambda=\binom{B/2\sqrt{N}\ \ \ \ C/\sqrt{N}\
}{-A/\sqrt{N}\ \ -B/2\sqrt{N}}$. The images of these quadratic forms form a
lattice $L$ in $M_{2}(\mathbb{R})$, in which $\lambda^{2}=\frac{D}{2N}$. $(ii)$
The dual lattice $L^{*}$ corresponds to those quadratic forms in which $N|A$ but
$B\in\mathbb{Z}$ is arbitrary. The discriminant group $D_{L}$ is
$\mathbb{Z}/2N\mathbb{Z}$ (the projection from $L^{*}$ just takes the class of
$B$), with $\frac{\gamma^{2}}{2}$ being the image of $\frac{B^{2}}{4N}$ in
$\mathbb{Q}/\mathbb{Z}$. $(iii)$ The action of $\gamma\in\Gamma_{0}(N)$ on $Q$
described above corresponds to its action on $\lambda$ by conjugation. This
identifies the quotient $\Gamma_{0}(N)/\{\pm I\}$ with a the discriminant kernel
of $L$. \label{quadlat}
\end{lem}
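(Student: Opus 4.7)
The plan is to dispatch the three parts by direct calculation, with the main issue being the reverse inclusion in part $(iii)$.

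For part $(i)$, the assignment $(A, B, C) \mapsto \lambda$ is a $\mathbb{Q}$-linear isomorphism from $\mathbb{Q}^3$ onto $M_2(\mathbb{Q})_0$, and the constraints $N \mid A$, $2N \mid B$, $C \in \mathbb{Z}$ cut out a full-rank discrete $\mathbb{Z}$-lattice in $M_2(\mathbb{R})_0$. The norm formula is a one-line calculation: $\lambda^2 = -2\det\lambda = B^2/(2N) - 2AC/N = D/(2N)$, which is an even integer since $2N \mid B$ and $N \mid A$.

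For part $(ii)$, I would evaluate the trace pairing on a generic traceless $\mu$ with parameters $(a, b, c)$, obtaining $(\lambda, \mu) = \mathrm{Tr}(\lambda\mu) = (Bb - 2Ac - 2Ca)/(2N)$. Letting $\lambda$ run over generators of $L$ (namely those coming from $(N,0,0)$, $(0,2N,0)$, and $(0,0,1)$) forces exactly $N \mid a$, $b \in \mathbb{Z}$, $c \in \mathbb{Z}$, which is the asserted description of $L^*$. The quotient $L^*/L$ is then parametrized by $b$ modulo $2N$, giving $D_L \cong \mathbb{Z}/2N\mathbb{Z}$ via the projection $\mu \mapsto b \pmod{2N}$, and $\mu^2/2 = (b^2 - 4ac)/(4N)$ reduces modulo $\mathbb{Z}$ to $b^2/(4N)$ since $N \mid a$ makes $ac/N$ an integer.

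Part $(iii)$ splits into two directions. The identification of the two actions is a direct matrix computation: expanding $\gamma(Q)(X, Y) = Q(dX + bY, cX + aY)$ gives
\[
A' = Ad^2 + Bcd + Cc^2, \qquad B' = 2Abd + B(ad + bc) + 2Cac, \qquad C' = Ab^2 + Bab + Ca^2,
\]
and one checks these are the parameters of $\gamma\lambda\gamma^{-1}$. When $N \mid c$ and $N \mid A$, the formula for $A'$ is manifestly divisible by $N$, and the formula for $B'$ combined with $ad - bc = 1$ gives $B' = 2Abd + B(1 + 2bc) + 2Cac \equiv B \pmod{2N}$ whenever $2N \mid B$. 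This places $\Gamma_0(N)/\{\pm I\}$ inside the discriminant kernel.

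The hard part is the reverse inclusion: if $\gamma \in SL_2(\mathbb{R})$ preserves $L$ and acts trivially on $L^*/L$, then $\gamma \in \pm\Gamma_0(N)$. My approach is to apply conjugation by $\gamma$ to the three lattice generators above and to a chosen lift of the generator of $L^*/L$ (from $(0,1,0)$), then read off the parameters of the resulting matrices. Each such requirement yields an integrality or divisibility condition on polynomials in the entries of $\gamma$ (for example $a^2, d^2, bd \in \mathbb{Z}$, $N \mid c^2$, $N \mid bc$, $N \mid cd$, and $ab \in \mathbb{Z}$ from the $L^*$-generator); combining these with $ad - bc = 1$ and a short case analysis forces $a, b, d \in \mathbb{Z}$ and $N \mid c$ up to the global sign $\pm I$. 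This combinatorial extraction is the main technical obstacle.
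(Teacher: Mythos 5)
Your calculations for parts $(i)$ and $(ii)$ and for the forward inclusion in part $(iii)$ are correct, and they are exactly the content the paper dismisses as straightforward. One small imprecision there: to show $\Gamma_{0}(N)$ lands in the discriminant kernel you must verify $B'\equiv B\pmod{2N}$ for \emph{all} integers $B$ with $N\mid A$ (i.e.\ for elements of $L^{*}$), not only under the hypothesis $2N\mid B$; the same computation goes through because each of $2Abd$, $2Bbc$, $2Cac$ is divisible by $2N$ using $N\mid A$ and $N\mid c$, but as written you have only checked that $L$ itself is preserved.

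The genuine gap is the reverse inclusion, which is the single step the paper does \emph{not} prove by hand: it invokes Proposition 2.2 of Bruinier--Ono for the assertion that $\Gamma_{0}(N)/\{\pm I\}$ surjects onto the discriminant kernel. You reduce this to a list of integrality and divisibility constraints on $a,b,c,d$ and then assert that ``a short case analysis'' forces $a,b,d\in\mathbb{Z}$ and $N\mid c$ up to sign; that assertion is precisely the cited proposition, and you have not carried it out. Two points make it more delicate than your sketch suggests. First, the conditions you actually list ($a^{2},d^{2},bd,ab\in\mathbb{Z}$, $N\mid c^{2},bc,cd$) follow from preservation of $L$ and $L^{*}$ alone and do not exclude, for instance, the Atkin--Lehner involution $\frac{1}{\sqrt{N}}\binom{0\ \ -1}{N\ \ \ \ 0}$, which preserves both lattices but acts as $-1$ on $D_{L}$; to kill such elements you must use the congruence $ad+bc\equiv1\pmod{2N}$ coming from triviality on the class of $(0,1,0)$, which together with $ad-bc=1$ gives $ad\equiv1\pmod N$ and $N\mid bc$ --- a condition absent from your list. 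Second, even with all constraints assembled, deducing $a,b,c,d\in\mathbb{Z}$ is not automatic: the integrality of $a^{2}$, $d^{2}$, $ad$, $bd$, $ab$ by itself still admits irrational solutions (e.g.\ $a=d=\sqrt{2}$), which are only eliminated by combining the congruence with $Nb^{2}\in\mathbb{Z}$ and $N\mid c^{2}$. So you must either complete this extraction in full or, as the paper does, cite the result from Bruinier--Ono.
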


\begin{proof}
The only part here which is not straightforward is the assertion that
$\Gamma_{0}(N)/\{\pm I\}$ surjects onto the discriminant kernel of $L$. But this
statement appears in Proposition 2.2 of \cite{[BO]}. This proves the lemma.
\end{proof}

Since the lattice $L$ from Lemma \ref{quadlat} is isotropic, it is conventional
to take the isotropic vector $z \in L_{\mathbb{R}}$ which is used for the
definition of $K_{\mathbb{R}}$ to be a primitive element of $L$. Hence we
replace the previous vector $z$ by its multiple $\binom{0\ \ 1/\sqrt{N}}{0\ \ \
\ 0\ \ \ }$, so that the complementary vector $\zeta$ is taken to be the
isotropic vector $\binom{\ \ 0\ \ \ 0}{\sqrt{N}\ \ 0}$. The lattice
$K=(z^{\perp} \cap L)/\mathbb{Z}z$ is spanned by $\binom{\sqrt{N}\ \ \ \ 0\ \ }{\ \ 0\ \ -\sqrt{N}}$, and dividing this generator by $2N$ yields a generator for $K^{*}$. Hence $D_{K}=D_{L}$. The identification of $K_{\mathbb{R}}+iC$ with $\mathcal{H}$ takes $z\in\mathcal{H}$ to $z\binom{\sqrt{N}\ \ \ \ 0\ \ }{\ \ 0\ \ -\sqrt{N}}$ (with vector norm $2Nz^{2}$), so that $Z_{V,Z}$ is just
$\sqrt{N}M_{z}$, and the associated negative definite part is still spanned by
$J_{z}$. Combining these results with Lemma \ref{b-=1exp} now proves
\begin{lem}
The pairing of the vector $\lambda$ associated with $Q$ with $Z_{V,Z}$ gives
$Az^{2}+Bz+C=Q(z,1)$ in the notation of \cite{[BK]}. Pairing the former vector
with $\sqrt{N}J_{z}$ (of vector norm $-2N$) gives $\frac{A|z|^{2}+Bu+C}{v}$,
which is denoted by $Q_{z}$ in that reference. \label{quadexp}
\end{lem}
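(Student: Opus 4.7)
The plan is to reduce the claim to a direct application of part $(i)$ of Lemma \ref{b-=1exp} by carefully tracking the normalizing $\sqrt{N}$ factors introduced in Lemma \ref{quadlat} and in the present section's choice of the primitive isotropic vector $z$.

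First I would read off the matrix entries of $\lambda$ associated to $Q(X,Y)=AX^{2}+BXY+CY^{2}$ via the identification in Lemma \ref{quadlat}: namely $\lambda=\binom{B/2\sqrt{N}\ \ \ \ C/\sqrt{N}\ }{-A/\sqrt{N}\ \ -B/2\sqrt{N}}$, which in the notation $\binom{b/2\ \ \ c\ \ }{-a\ \ -b/2}$ of Lemma \ref{b-=1exp} corresponds to $a=A/\sqrt{N}$, $b=B/\sqrt{N}$, $c=C/\sqrt{N}$. Then part $(i)$ of Lemma \ref{b-=1exp} yields immediately
\[(\lambda,M_{z})=\tfrac{1}{\sqrt{N}}(Az^{2}+Bz+C)=\tfrac{1}{\sqrt{N}}Q(z,1)\quad\text{and}\quad(\lambda,J_{z})=\tfrac{1}{\sqrt{N}v}(A|z|^{2}+Bu+C).\]

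Next I would recall that, for the present choice of primitive isotropic vector $z=\binom{0\ \ 1/\sqrt{N}}{0\ \ \ \ 0\ \ \ }$ and dual $\zeta=\binom{\ \ 0\ \ \ 0}{\sqrt{N}\ \ 0}$, the isotropic vector $Z_{V,Z}\in L_{\mathbb{C}}$ attached to $z\in\mathcal{H}\cong K_{\mathbb{R}}+iC$ was identified just above as $\sqrt{N}M_{z}$. Multiplying the first identity by $\sqrt{N}$ then gives
\[(\lambda,Z_{V,Z})=\sqrt{N}(\lambda,M_{z})=Az^{2}+Bz+C=Q(z,1),\]
which is the first assertion. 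Similarly, multiplying the second identity by $\sqrt{N}$ gives
\[(\lambda,\sqrt{N}J_{z})=\sqrt{N}(\lambda,J_{z})=\frac{A|z|^{2}+Bu+C}{v}=Q_{z},\]
proving the second assertion. The claim that $\sqrt{N}J_{z}$ has norm $-2N$ is immediate from $J_{z}^{2}=-2$ (as noted in the proof of part $(iv)$ of Lemma \ref{b-=1exp}).

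There is essentially no obstacle: the content of the lemma is the bookkeeping of the two distinct occurrences of $\sqrt{N}$, one from the embedding of quadratic forms into $L$ (which puts $1/\sqrt{N}$ into the entries of $\lambda$) and one from the rescaling of $z$ to a primitive element of $L$ (which puts $\sqrt{N}$ into $Z_{V,Z}$ and into the natural negative-norm generator $\sqrt{N}J_{z}$ of $K_{\mathbb{R}}$). These $\sqrt{N}$ factors cancel exactly, reproducing the expressions $Q(z,1)$ and $Q_{z}$ used in \cite{[BK]}.
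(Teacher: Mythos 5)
Your proposal is correct and follows exactly the route the paper intends: the lemma is stated as an immediate consequence of combining the identification $Z_{V,Z}=\sqrt{N}M_{z}$ (from the rescaled choice of the primitive isotropic vector) with part $(i)$ of Lemma \ref{b-=1exp}, and your bookkeeping of the two cancelling $\sqrt{N}$ factors is precisely that computation. The norm claim $(\sqrt{N}J_{z})^{2}=-2N$ also follows from $J_{z}^{2}=-2$ as you say, so nothing is missing.
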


\smallskip

Given $N$ and $\beta$ as above as well as a negative discriminant $D$, we define $\mathcal{Q}_{\beta,D}^{N}$ to be the set of integral binary quadratic forms as
above (with $A>0$), whose discriminant equals $D$. They are all positive definite. By part $(iii)$ of Lemma \ref{quadlat} these quadratic forms form an orbit of $\Gamma_{0}(N)$ (or perhaps the union of finitely many orbits), and part $(i)$ of that Lemma shows that the parameter $r$ we used above equals $\frac{D}{4N}$. The non-triviality relation $r\in\frac{\beta^{2}}{2}+\mathbb{Z}$ is the usual condition $D \equiv B^{2}\pmod{4N}$, a condition which we assume from now on. The quadratic forms satisfying these conditions but in which $A$ is negative (i.e., those which are negative definite) are the additive inverses of the quadratic forms from $\mathcal{Q}_{-\beta,D}^{N}$.

Generalizing the meromorphic modular forms defined in \cite{[BK]} to level $N$,
we define the weight $2m+2$ meromorphic modular form
\[f_{m+1,\beta,D}(z)=\frac{|D|^{\frac{m+1}{2}}}{2N^{\frac{m}{2}}}
\sum_{Q\in\mathcal{Q}_{\beta,D}^{N}\cup(-1)^{m}\mathcal{Q}_{-\beta,D}^{N}}\frac{1}{Q(z,1)^{m+1}},\] where the union with $(-1)^{m+1}\mathcal{Q}_{-\beta,D}^{N}$ has the same meaning as the union with $(-1)^{m}S_{-\beta,r}$ above. In addition, we consider the function \[\mathcal{F}_{\beta,D,-1}=\sum_{Q\in\mathcal{Q}_{\beta,D}^{N}\cup(-1)^{m}\mathcal{Q}_{-\beta,D}^{N}}
\frac{Q(z,1)^{m}}{2(N|D|)^{\frac{m}{2}}}\int_{0}^{\mathrm{arctanh}(\sqrt{|D|}/Q_{z})}\sinh^{2m}\theta d\theta.\] Note that if $2\beta=0$ (hence with even $m$) there are no cancelations, since $Q$ always stands for a positive definite quadratic form. This is always the case if $N=1$. We now prove
\begin{prop}
Our $\mathcal{F}_{\beta,D,-1}$ generalizes the function denoted by
$\mathcal{F}_{Q,-1}$ in \cite{[BK]} to the case of level $N$. \label{F-1gen}
\end{prop}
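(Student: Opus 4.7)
The plan is to verify the claim by reducing to $N=1$ and matching both the summation domain and the integrand to those of \cite{[BK]}. First, I would observe that when $N=1$, the group $\mathbb{Z}/2N\mathbb{Z}$ collapses so that $\beta$ is trivial and $-\beta = \beta$; hence $\mathcal{Q}_{\beta,D}^{1} = \mathcal{Q}_{-\beta,D}^{1}$ is the classical set $\mathcal{Q}_{D}$ of integral binary quadratic forms of discriminant $D$ considered in \cite{[BK]}. Since $2\beta = 0$ in $D_L$, the discussion following Corollary \ref{b-=1lift} applies: for even $m$ the union $\mathcal{Q}_{\beta,D}^{N}\cup(-1)^{m}\mathcal{Q}_{-\beta,D}^{N}$ amounts to two copies of $\mathcal{Q}_{D}$, while for odd $m$ the summand vanishes, matching the conventions in \cite{[BK]} exactly.

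Next I would translate the ingredients of the integrand into the notation of \cite{[BK]} via Lemma \ref{quadexp}. At $N=1$ that lemma gives $(\lambda, Z_{V,Z}) = Q(z,1)$ and $(\lambda, J_z) = Q_z = (A|z|^{2}+Bu+C)/v$. The normalizing factor $2(N|D|)^{m/2}$ reduces to $2|D|^{m/2}$, so the prefactor of each summand in $\mathcal{F}_{\beta,D,-1}$ becomes precisely $Q(z,1)^{m}/2|D|^{m/2}$, which is the prefactor used in \cite{[BK]}.

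It then remains to show that the upper limit $\mathrm{arctanh}\bigl(\sqrt{|D|}/Q_{z}\bigr)$ of the $\theta$-integral is the one appearing in \cite{[BK]}. Here I would use Lemma \ref{b-=1neg}: writing $\lambda = -\sqrt{|r|}J_{w}$ for the CM point $w = z_{Q}\in\mathcal{H}$ associated to $Q$ and the scaling $r = D/4N = D/4$ at $N=1$, we obtain $Q_{z} = \sqrt{|D|}\cosh d(z,z_{Q})$, so $\sqrt{|D|}/Q_{z} = 1/\cosh d(z,z_{Q}) = \tanh\bigl(\mathrm{arctanh}(\mathrm{sech}\, d(z,z_{Q}))\bigr)$. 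This identifies the upper limit of integration with the hyperbolic quantity used in \cite{[BK]} to define $\mathcal{F}_{Q,-1}$, making the two integrands term-by-term equal.

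The main bookkeeping obstacle will be keeping the signs, the factor of $2$, and the $(-1)^{m}$-twist in the union consistent across the two conventions: in particular, making sure that the doubling for $2\beta = 0$ under $N=1$ matches the convention in \cite{[BK]} of summing over classes of positive definite forms only (rather than over all forms of discriminant $D$), so that no spurious factor of $2$ appears in the comparison. Once these normalizations are checked, the equality $\mathcal{F}_{\beta,D,-1} = \mathcal{F}_{Q,-1}$ at $N=1$, summed over $Q\in\mathcal{Q}_{D}$, follows immediately, and the definition for general $N$ is the required generalization.
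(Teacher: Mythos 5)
Your handling of the summation set is consistent with the paper's: for $2\beta=0$ (in particular for $N=1$, necessarily with $m$ even) the union collapses and the doubling is absorbed by the $2$ in the denominator $2(N|D|)^{m/2}$, and the restriction to positive definite forms versus their negatives is exactly the point the paper makes at the end of Section \ref{QuadForms}. Your identity $Q_{z}=\sqrt{|D|}\cosh d(z,z_{Q})$, obtained from Lemmas \ref{quadexp} and \ref{b-=1neg}, is precisely half of the paper's Equation \eqref{Qz1D}, and the observation that the upper limit of the $\theta$-integral is already expressed in the notation $Q_{z}$ of \cite{[BK]} is fine (though your $\tanh(\mathrm{arctanh}(\cdot))$ manipulation is circular and adds nothing).

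The gap is in the prefactor. You take for granted that $\mathcal{F}_{Q,-1}$ in \cite{[BK]} is defined with the coefficient $Q(z,1)^{m}/(2|D|^{m/2})$ in front of the integral, so that matching prefactors becomes a tautology. That is not how \cite{[BK]} presents the function: their definition goes through the coefficient $c_{-1-n,Q}(z)=c_{0,Q}(z)$ of their Equation (8.3), namely the constant term of the expansion of $G_{z,w,1}(\tau)=\frac{(\tau-w)^{m}(\tau-\overline{w})^{m}}{(2t)^{m}(\tau-\overline{z})^{2m+1}}$ around $\tau=z$. The substantive step of the paper's proof is to evaluate this constant term by substituting $\tau=z$ and to identify it, via the companion identity $Q(z,1)=\sqrt{|D|}\,(z-w)(z-\overline{w})/(2t)$ (which you never derive, although Lemma \ref{b-=1neg} yields it just as readily as the formula for $Q_{z}$), with $\frac{Q(z,1)^{m}}{|D|^{m/2}(2iv)^{2m+1}}$. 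Without this computation your argument never actually touches the object defined in \cite{[BK]}, and the claimed term-by-term equality of the two functions is asserted rather than proved. To close the gap you would need to derive both halves of Equation \eqref{Qz1D} and carry out the residue/constant-term evaluation linking $c_{0,Q}(z)$ to the prefactor appearing in the definition of $\mathcal{F}_{\beta,D,-1}$.
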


\begin{proof}
A quadratic form $Q\in\mathcal{Q}_{\beta,D}^{N}$ was seen to correspond to
$-\sqrt{|r|}J_{w}$, where $r=\frac{D}{4N}$ and $w=\sigma+it$ is the unique
element of $\mathcal{H}$ satisfying $Q(w,1)=0$ (this point $w$ was denoted by
$z_{Q}$ in \cite{[BK]}). The entry denoted by $a$ in Lemma \ref{b-=1exp} equals
$\frac{A}{\sqrt{N}}$ in Lemma \ref{quadexp} as well as $\frac{\sqrt{|r|}}{t}$
in  Lemma \ref{b-=1neg}, so that we obtain from the value of $r$ and from
Equation \eqref{coshd} the equalities
\begin{equation}
Q(z,1)=\sqrt{|D|}\cdot\frac{(z-w)(z-\overline{w})}{2t}\quad\mathrm{and}\quad
Q_{z}=\sqrt{|D|}\cosh d(z,w). \label{Qz1D}
\end{equation}
Now, the coefficient $c_{-1-n,Q}(z)=c_{0,Q}(z)$ appearing in Equation (8.3) of
\cite{[BK]} is just the constant term in the expansion of
$\frac{(\tau-w)^{m}(\tau-\overline{w})^{m}}{(2t)^{m}(\tau-\overline{z})^{2m+1}}$
around $\tau=z$ (this is $G_{z,w,1}(\tau)$ in the notation of \cite{[BK]}, with
$k=m+1$). It can be evaluated by a simple substitution $\tau=z$, yielding the
value $\frac{Q(z,1)^{m}}{|D|^{m/2}(2iv)^{2m+1}}$ by Equation \eqref{Qz1D}. If
$2\beta=0$ (like when $N=1$) then the union
$\mathcal{Q}_{\beta,D}^{N}\cup\mathcal{Q}_{-\beta,D}^{N}$ (recall that $m$ is
even) reduces to one set $\mathcal{Q}_{\beta,D}^{N}$, but the factor 2 in the
denominator is canceled. This proves the proposition.
\end{proof}

\smallskip

We are now able to establish the relation between our theta lifts and the
modular forms from \cite{[BK]}:
\begin{prop}
The function
$\frac{(4v^{2}\partial_{\overline{z}})^{2m}\Phi_{m,r,\beta}^{L}}{(2m)!}$ from
Proposition \ref{D2m+1xi} becomes, for the lattice $L$ defined in
Lemma \ref{quadlat} and with $r=\frac{D}{4N}$, the function
$\mathcal{F}_{\beta,D,-1}$ multiplied by $\frac{4m!|D|^{m/2}}{(-4\pi i)^{m}}$.
The function from Theorem \ref{delta2mPhi} equals, in this case,
$-\frac{|D|^{m/2}m!}{(8i)^{m}\pi^{m+1}}$ times the modular form
$f_{m+1,\beta,D}$. \label{reltoBK}
\end{prop}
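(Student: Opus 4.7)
The plan is to verify both equalities by direct substitution, using the explicit formulae from Sections~\ref{Dim1} and~\ref{LocCoeff} together with the dictionary between integral quadratic forms and lattice vectors provided by Lemmas~\ref{quadlat} and~\ref{quadexp}. Under that dictionary, and with $r=D/(4N)$, the set $S_{\beta,r}$ matches $\mathcal{Q}_{\beta,D}^{N}$, while $S_{-\beta,r}$ matches $\mathcal{Q}_{-\beta,D}^{N}$ after replacing $Q'\in\mathcal{Q}_{-\beta,D}^{N}$ by its negative $-Q'$ (a negative definite form with $B\in\beta+2N\mathbb{Z}$), so that the signed unions in Corollary~\ref{b-=1lift}, Theorem~\ref{delta2mPhi}, in the definition of $\mathcal{F}_{\beta,D,-1}$ and in that of $f_{m+1,\beta,D}$ are in exact correspondence. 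The key pointwise identities are Equation~\eqref{Qz1D}, which rewrites $(z-w)(z-\overline{w})$ as $2tQ(z,1)/\sqrt{|D|}$ and $\cosh d(z,w)$ as $Q_{z}/\sqrt{|D|}$, together with $|r|^{m/2}=|D|^{m/2}/(2^{m}N^{m/2})$.

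For the second claim I would start from Theorem~\ref{delta2mPhi}: substituting $(z-w)^{m+1}(z-\overline{w})^{m+1}=(2t)^{m+1}Q(z,1)^{m+1}/|D|^{(m+1)/2}$ turns each summand $(2it)^{m+1}/[(z-w)^{m+1}(z-\overline{w})^{m+1}]$ into $i^{m+1}|D|^{(m+1)/2}/Q(z,1)^{m+1}$. Collecting the resulting powers of $2$, $i$ and $|D|$ against the definition of $f_{m+1,\beta,D}$ and the external constant $2i|r|^{m/2}m!/[(-1)^{m}(4\pi)^{m+1}]$ of Theorem~\ref{delta2mPhi} reproduces the asserted scalar $-|D|^{m/2}m!/[(8i)^{m}\pi^{m+1}]$; the only nontrivial simplification is the identity $i^{m+2}/(-1)^{m}=-(-i)^{m}=-1/i^{m}$, which converts the prefactor to the stated form involving $(8i)^{m}=2^{3m}i^{m}$.

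For the first claim I would use the equality $\tfrac{(4v^{2}\partial_{\overline{z}})^{2m}\Phi_{m,r,\beta}^{L}}{(2m)!}=2^{m}v^{2m}\overline{\Phi_{m,r,\beta}^{L}}$ obtained in the course of the proof of Proposition~\ref{D2m+1xi}, and then conjugate Corollary~\ref{b-=1lift} (noting that $B_{m}(\cosh d(z,w))$ is real-valued). Applying Equation~\eqref{Qz1D} and the value of $|r|^{m/2}$ recorded above reduces the whole assertion to the single elementary identity
\[
B_{m}(T)=2\int_{0}^{\mathrm{arctanh}(1/T)}\sinh^{2m}\theta\,d\theta\qquad(T>1),
\]
which I would verify via the substitution $\xi=\coth\theta$ in the representation $B_{m}(T)=\int_{T}^{\infty}2\,d\xi/(\xi^{2}-1)^{m+1}$ of Lemma~\ref{Bm1/T2}(i): since $\xi^{2}-1=\mathrm{csch}^{2}\theta$ and $d\xi=-\mathrm{csch}^{2}\theta\,d\theta$, the integrand collapses to $2\sinh^{2m}\theta\,d\theta$, while the bounds $\xi=T,\infty$ transform to $\theta=\mathrm{arctanh}(1/T),0^{+}$.

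The entire argument is routine in character, so the only real obstacle I expect is the careful accounting of the constants -- the factors $i^{m}$, $(-1)^{m}$ and $2^{m}$ coming from the various normalizations in Theorem~\ref{delta2mPhi}, Corollary~\ref{b-=1lift} and Proposition~\ref{D2m+1xi}, together with the signed matching of positive and negative definite quadratic forms to the sets $S_{\pm\beta,r}$ -- must all line up exactly as claimed.
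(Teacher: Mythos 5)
Your proposal is correct and follows essentially the same route as the paper: the dictionary of Lemmas \ref{quadlat} and \ref{quadexp} plus Equation \eqref{Qz1D} is substituted directly into Corollary \ref{b-=1lift} and Theorem \ref{delta2mPhi}, the identity $\frac{(4v^{2}\partial_{\overline{z}})^{2m}\Phi_{m,r,\beta}^{L}}{(2m)!}=(2v^{2})^{m}\overline{\Phi_{m,r,\beta}^{L}}$ is invoked from the proof of Proposition \ref{D2m+1xi}, and the constants are tracked. The only cosmetic difference is that you identify the incomplete beta function with the $\sinh^{2m}$ integral via $\xi=\coth\theta$ applied to Lemma \ref{Bm1/T2}$(i)$, whereas the paper substitutes $\xi=\tanh^{2}\theta$ directly in the defining integral of $B\big(m+\frac{1}{2},-m;\cdot\big)$ --- the same computation in different coordinates.
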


\begin{proof}
The argument leading to Equation \eqref{Qz1D} also shows that the set of points
$w\in\mathcal{H}$ such that $Q(w,1)$ vanishes for some
$Q\in\mathcal{Q}_{\beta,D}^{N}$ is precisely the set denoted $S_{\beta,r}$ (with
$r=\frac{D}{4N}$) above. Substituting this relation, the value of $r$, and
Equation \eqref{Qz1D} into the expression from Corollary \ref{b-=1lift} shows
that if $z$ does not lie in $S_{\beta,r} \cup S_{-\beta,r}$ then
$\Phi_{m,D/4N,\beta}^{L}(z)$ equals \[\frac{m!}{(8\sqrt{N}\pi
i)^{m}}\sum_{Q\in\mathcal{Q}_{\beta,D}^{N}\cup(-1)^{m}\mathcal{Q}_{-\beta,D}^{N}
}\frac{Q(\overline{z},1)^{m}}{v^{2m}}B\bigg(m+\frac{1}{2},-m;\frac{|D|}{Q_{z
}^{2}}\bigg)\] in this case. Now, the proof of Proposition \ref{D2m+1xi} shows
that first function in question is $(2v^{2})^{m}$ times the complex conjugate of
the latter expression. In addition, the change of variable
$\theta=\mathrm{arctanh}\sqrt{\xi}$ and $\xi=\tanh^{2}\theta$ yields the
equalities
\[\sinh^{2m}(\theta)=\bigg(\frac{\tanh^{2}\theta}{1-\tanh^{2}\theta}\bigg)^{m}
=\frac{\xi^{m}}{(1-\xi)^{m}}\quad\mathrm{and}\quad
d\theta=\frac{d\xi}{2\sqrt{\xi}(1-\xi)}.\] Hence the integral appearing in the
definition of $\mathcal{F}_{\beta,D,-1}$ is just the incomplete beta function
$\frac{1}{2}B\Big(m+\frac{1}{2},-m;\frac{|D|}{Q_{z}^{2}}\Big)$. This
proves the first relation. Plugging the value of $r$ and the expression from
Equation \eqref{Qz1D} into the formula for $\frac{1}{2\pi
i}\delta_{2m}\Phi_{m,D/4N,\beta}^{L}$ from Theorem \ref{delta2mPhi} yields the
function
\[-\frac{4|D|^{m/2}m!}{(i\sqrt{N})^{m}(8\pi)^{m+1}}\sum_{Q\in\mathcal{Q}_{\beta,
D}^{N}\cup(-1)^{m}\mathcal{Q}_{-\beta,D}^{N}}\frac{\sqrt{|D|}^{m+1}}{Q(z,1)^{m+1
}},\] which is easily seen to be $f_{m+1,\beta,D}$ times the asserted constant.
This completes the proof of the proposition.
\end{proof}

The same argument as in the proof of Proposition \ref{reltoBK} shows that for
this lattice, the expression given in Theorem \ref{final} for the pairing
$\big\langle g,\frac{1}{2\pi i}\delta_{2m}\Phi_{m,r,\beta}^{L}\big\rangle^{reg}$
is the constant $\frac{m!/2i}{(-8\sqrt{N}\pi i)^{m}}$ times
\[\sum_{j,\pm}\frac{(\pm)^{m}}{\big|\Gamma_{Q_{j}^{\pm}}\big|}\!\sum_{\substack{
g(w)=\infty \\
Q_{j}^{\pm}(w,1)\neq0}}\!\mathrm{Res}_{z=w}\bigg[g(z)Q_{j}^{\pm}(z,1)^{m}B_{m}
\bigg(\frac{(Q_{j}^{\pm})_{z}}{\sqrt{|D|}}-\frac{Q_{j}^{\pm}(z,1)^{m}}{\sqrt{|D|
}t}A_{w}(z)\bigg)dz\bigg],\] where the $Q_{j}^{\pm}$ are representatives for
the sets $\mathcal{Q}_{\pm\beta,D}^{N}$ modulo the action of $\Gamma_{0}(N)$.
Note that by taking only $p=0$ in Proposition \ref{contwneqw0} (namely replacing
the function $B_{m}$ with its value at $z=w$) we obtain the required constant
from Proposition \ref{reltoBK} (with $N=1$) times the value of the pairing given
in Theorem 1.1 of \cite{[BK]}. Indeed, our incomplete beta function is twice the
integral over $\theta$ appearing in that reference, and $w_{0}=z_{Q}$ (or $Q$)
is counted there twice, once as an element of $\mathcal{Q}_{\beta,D}^{1}$ and once
as an element of $\mathcal{Q}_{-\beta,D}^{1}$.

\smallskip

We conclude with some remarks about the geometric context of the constructions
in this paper. In the case presented in this section, as well as the more
general case in which the group $\Gamma$ is related to indefinite rational
quaternion algebras (as in Section 1 of \cite{[Ze3]}), the curve $Y_{\Gamma}$
serves as the moduli space of elliptic curves, or Abelian surfaces with
quaternion multiplication, with some additional data. Hence $Y_{\Gamma}$ (as
well as $X_{\Gamma}$) carries universal families of symmetric powers of such
objects, yielding local systems of the sort described in \cite{[Ze3]}. Modular
forms with the associated representations are investigated in detail in
\cite{[Ze2]}, and some components of the latter functions can be interpreted as
elements of cohomology groups of these universal families. Indeed, our function
$\frac{1}{2\pi i}\delta_{2m}\Phi_{m,r,\beta}^{L}$ may be completed to such a
meromorphic vector-valued differential form, admitting a vector-valued pre-image
under $\partial$. This pre-image contains $\Phi_{m,r,\beta}^{L}$ as its weight
$2m$ component, as well as the function from Proposition \ref{D2m+1xi} as the
weight $-2m$ component. In fact, one can also prove, using associated Legendre
polynomials and certain differential equations, that the weight 0 component of
this pre-image is $(-1)^{m+1}\frac{|r|^{m/2}m!}{(2\pi)^{m}}$ times a real
function with certain logarithmic singularities on $X_{\Gamma}$. In the case
considered in this section the latter function is the specialization of the
global higher Green's function $G^{\Gamma\backslash\mathcal{H}}_{m+1}$ of
\cite{[Me]} in which one variable is taken from the (finite) image of
$S_{\beta,r}\cup(-1)^{m}S_{-\beta,r}$ (interpreted as usual) in $X_{\Gamma}$.
In the more general setting it is again a function of the same type, but in
which the summation of the local higher Green's function $G^{\mathcal{H}}_{m+1}$
is carried over a different group $\Gamma$. Hence the vector-valued pre-image
mentioned here, which we may construct by applying powers of the weight lowering
operator $4v^{2}\partial_{\overline{z}}$ on $\Phi_{m,r,\beta}^{L}$, coincides
(in the special case considered in this section) with the vector-valued
functions appearing in \cite{[Me]}.

\noindent\textsc{Einstein Institute of Mathematics, the Hebrew University of Jerusalem, Edmund Safra Campus, Jerusalem 91904, Israel}

\noindent E-mail address: zemels@math.huji.ac.il


\begin{thebibliography}{}{}

\bibitem[B]{[B]} Borcherds, R. E., \textsc{Automorphic Forms with
    Singularities on Grassmannians}, Invent. Math., vol. 132, 491--562 (1998).
\bibitem[BK]{[BK]} Bringmann, K., Kane, B., \textsc{Cycle integrals of
    meromorphic modular forms and CM-values of automorphic forms}, submitted
    for publication.
\bibitem[Bru]{[Bru]} Bruinier, J. H., \textsc{Borcherds Products on $O(2,l)$ and
    Chern Classes of Heegner Divisors}, Lecture Notes in Mathematics, vol 1780,
    Springer--Verlag (2002).
\bibitem[BF]{[BF]} Bruinier, J. H., Funke, J., \textsc{On Two Geometric Theta
    Lifts}, Duke Math J., vol 125 no. 1, 45--90 (2004).
\bibitem[BO]{[BO]} Bruinier, J. H., Ono, K., \textsc{Heeger Divisors,
    $L$-Functions, and Harmonic Weak Maass Forms}, Ann. of Math., vol 172,
    2135--2181 (2010).
\bibitem[EMOT1]{[EMOT1]} Erd\'{e}lyi, A., Magnus, W., Oberhettinger, F., Tricomi, F., \textsc{Higher Transcendental Functions}, McGraw--Hill (1953).
\bibitem[EMOT2]{[EMOT2]} Erd\'{e}lyi, A., Magnus, W., Oberhettinger, F., Tricomi, F., \textsc{Tables of Integral Transforms}, vol 1, McGraw--Hill (1954).
\bibitem[KZ]{[KZ]} Kohnen, W., Zagier, D., \textsc{Values of L-Series of Modular Forms at     the Center of the Critical Strip}, Inv. Math., vol 64, 175--198 (1981).
\bibitem[Me]{[Me]} Mellit, A., \textsc{Higher Green's Functions for Modular Forms}, Ph.D. Thesis, Universit\"{a}t Bonn (2008).
\bibitem[Za]{[Za]} Zagier, D., \textsc{Modular Forms Associated to Real
    Quadratic Fields}, Inv. Math., vol 30, 1--46 (1975).
\bibitem[Ze1]{[Ze1]} Zemel, S., \textsc{A $p$-adic Approach to the Weil
    Representation of Discriminant Forms Arising from Even Lattices}, Math. Ann. Qu\'{e}bec, vol 39 issue 1, 61--89 (2015).
\bibitem[Ze2]{[Ze2]} Zemel, S., \textsc{On Quasi-Modular Forms, Almost
    Holomorphic Modular Forms, and the Vector-Valued Modular Forms of Shimura},
    Ramanujan J., vol 37 issue 1, 165--180 (2015).
\bibitem[Ze3]{[Ze3]} Zemel, S., \textsc{A Gross--Kohnen--Zagier Type Theorem for Higher-Codimensional Heegner Cycles}, to appear in Research in Number Theory.
\bibitem[Ze4]{[Ze4]} Zemel, S., \textsc{Weight Changing Operators for
    Automorphic Forms on Grassmannians and Differential Properties of Certain
    Theta Lifts}, submitted for publication.

\end{thebibliography}
\end{document}